\newcommand\dataset[1]{\textsc{\texttt{#1}}}
\newcommand{\edit}[1]{{\bf \textcolor{blue}
		{[#1\marginpar{\textcolor{red}{***}}]}}}
\newcommand{\HOX}[1]{\marginpar{\footnotesize #1}}
\newcolumntype{M}[1]{>{\centering\arraybackslash}m{#1}}
\newcolumntype{N}{@{}m{0pt}@{}}
\let\c@table\c@figure
\newcommand{\addresseshere}{%
	\enddoc@text\let\enddoc@text\relax
}
\def\one{\mathbf{1}}
\def\EE{\mathbb{E}}
\def\P{\mathbb{P}}
\def\eps{\varepsilon}
\def\diam{\textup{diam}}
\def\G{\mathcal{G}}
\def\x{\mathbf{x}}
\def\y{\mathbf{y}}
\def\z{\mathbf{z}}
\definecolor{hancolor}{rgb}{0.0 0.0, 1.0}
\newcommand{\commHL}[1]{{\textcolor{black}{#1}}} 
\newcommand{\facundo}[1]                {{ \textcolor{red} {#1}}}
\newtheorem{thm}{Theorem}[section]
\newtheorem{lemma}{Lemma}[section]
\newtheorem{prop}{Proposition}[section]
\newtheorem{cor}{Corollary}[section]
\newtheorem{problem}{Problem}[section]
\newtheorem{dfn}{Definition}[section]
\newtheorem{ex}{Example}[section]
\newtheorem{rmkk}{Remark}[section]
\begin{document}
	
	\title[Sampling random graph homomorphisms and applications to network data analysis]{Sampling random graph homomorphisms \\ and applications to network data analysis}

	\author{Hanbaek Lyu}\thanks{All codes are available at \url{https://github.com/HanbaekLyu/motif_sampling}}
	\address{Hanbaek Lyu, Department of Mathematics, University of California Los Angeles, CA 90095}
	\email{\texttt{colourgraph@gmail.com}}

	\author{Facundo M\'emoli}
	\address{Facundo Memoli, Department of Mathematics, The Ohio State University, Columbus, OH 43210}
	\email{\texttt{memoli@math.osu.edu}}

	\author{David Sivakoff}
	\address{David Sivakoff, Department of Statistics and Mathematics, The Ohio State University, Columbus, OH 43210}
	\email{\texttt{dsivakoff@stat.osu.edu}}
	
	\date{\today}
	
	\keywords{Networks, sampling, graph homomorphism, MCMC, graphons, stability inequalities, hierarchical clustering, subgraph classification}

	\maketitle
	
	\begin{abstract}
		A graph homomorphism is a map between two graphs that preserves adjacency relations. We consider the problem of
		sampling a random graph homomorphism from a graph into a large network. We propose two complementary MCMC algorithms for sampling random graph homomorphisms and establish bounds on their mixing times and the concentration of their time averages. Based on our sampling algorithms, we propose a novel framework for network data analysis that circumvents some of the drawbacks in methods based on independent and neighborhood sampling. Various time averages of the MCMC trajectory give us various computable observables, including well-known ones such as homomorphism density and average clustering coefficient and their generalizations. Furthermore, we show that these network observables are stable with respect to a suitably renormalized cut distance between networks. We provide various examples and simulations demonstrating our framework through synthetic networks. We also \commHL{demonstrate the performance of} our framework on the tasks of network clustering and subgraph classification on the Facebook100 dataset and on Word Adjacency Networks of a set of classic novels. 
	\end{abstract}

	\section{Introduction}
	
	Over the past several decades, technological advances in data collection and extraction have fueled an explosion of network data from seemingly all corners of science -- from computer science to the information sciences, from biology and bioinformatics to physics, and from economics to sociology. These data sets come with a locally defined pairwise relationship, and the emerging and interdisciplinary field of Network Data Analysis aims at systematic methods to analyze such network data at a systems level, by combining various techniques \commHL{from probability, statistics, graph theory, geometry, and topology.}

	Sampling is an indispensable tool in the statistical analysis of large graphs and networks. Namely, we select a typical sample of the network and calculate its graph theoretical properties such as average degree, mean shortest path length, and expansion (see \cite{kolaczyk2014statistical} for a survey of statistical methods for network data analysis). One of the most fundamental sampling methods, which is called the \textit{independent sampling}, is to choose a fixed number of nodes independently at random according to some distribution on the nodes. One then studies the properties of the subgraph or subnetwork induced on the sample. Independent sampling is suitable for dense graphs, and closely connected to the class of network observables called the \textit{homomorphism density}, which were the central thread in the recent development of the theory of dense graph limits and graphons \citep{lovasz2006limits, lovasz2012large}.

	An alternative sampling procedure particularly suitable for sparse networks is called the \textit{neighborhood sampling} (or snowball sampling). Namely, one may pick a random node and sample its entire neighborhood up to some fixed radius, so that we are guaranteed to capture a connected local piece of the sparse network. We then ask what the given network looks like locally. For instance, the average clustering coefficient, first introduced in \cite{watts1998collective}, is a network observable that measures the extent to which a given network locally resembles complete graphs. Also, neighborhood sampling was used in 
	\cite{benjamini2001recurrence} to define the sampling distance between networks and to define the limit object of sequences of bounded degree networks.

	
	\commHL{Our primary concern in this work, roughly speaking, is to \textit{sample connected subgraphs from a possibly sparse network in a way such that certain minimal structure is always imposed.} A typical example is to sample $k$-node subgraphs with uniformly random Hamiltonian path, see Section \ref{section:sampling_hamiltonian}. }More generally, for a fixed `template graph' (motif) $F$ of $k$ nodes, we would like to sample $k$ nodes from the network $\G$ so that the induced subnetwork always contains a `copy' of $F$. This is equivalent to conditioning the independent sampling to contain a `homomorphic copy' of $F$. This conditioning enforces that we are always sampling some meaningful portion of the network, where the prescribed motif $F$ serves as a backbone. One can then study the properties of subnetworks of $\G$ induced on this random copy of $F$. \commHL{Clearly, neither  independent sampling nor  neighborhood sampling serve this purpose, as the former returns disconnected subgraphs with high probability (due to the sparsity of the network) and the latter has no control over the structure of the subgraphs  being sampled. We call this sampling scheme \textit{motif sampling} (see Figure \ref{fig:sampling}) and it should not be confused with sampling graphs from a random graph model. 
	}
	
	
	\begin{figure*}[h]
		\centering
		\includegraphics[width=0.7 \linewidth]{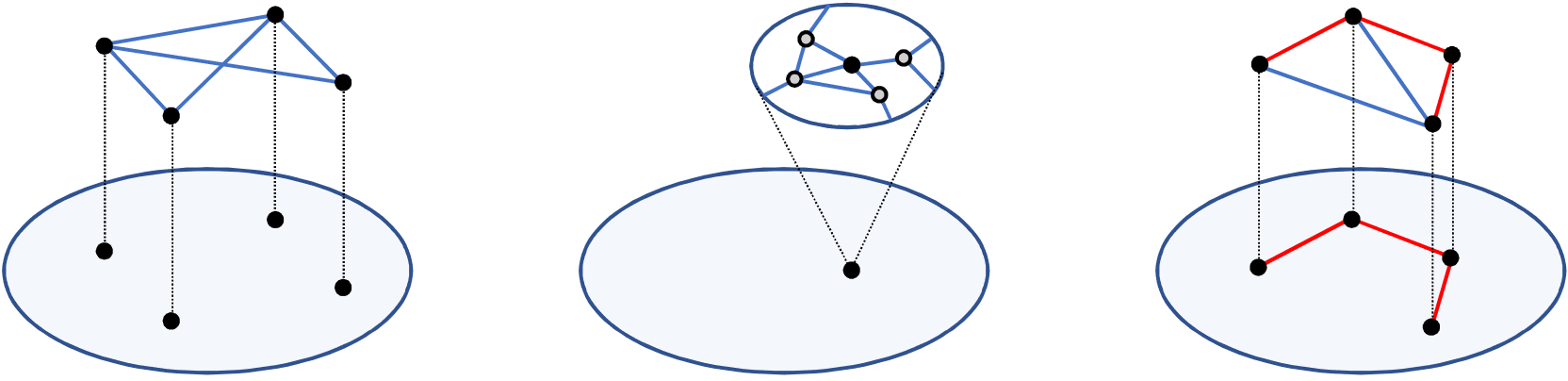}
		\caption{ Independent sampling (left), neighborhood sampling (middle), and motif sampling (right).
		}
		\label{fig:sampling}
	\end{figure*}
	
	{\color{black}
		Once we have developed sufficient mathematical and computational foundations for the motif sampling problem, we will use them to devise computationally efficient and stable network observables. As the typical size and complexity of network data  far exceed the capabilities of human perception, we need some lens through which we can study and analyze network data. Namely, given a network $\G$, we want to associate a much simpler object $f(\G)$, which we call a \textit{network observable}, such that it can be computed in a reasonable amount of time even when $\G$ is large and complex, and yet it retains substantial information about $\G$. These two desired properties of network observables are stated more precisely below:
		\begin{description}
			\item{(i)} (\textit{Computability}) The observable $f(\G)$ is computable in at most polynomial time in the size of the network $\G$. 
			
			\item{(ii)} (\textit{Stability}) For  two given networks $\G_{1},\G_{2}$, we have 
			\begin{align}\label{eq:stability_ineq}
				d(f(\G_{1}),f(\G_{2})) \le d(\G_{1},\G_{2}),
			\end{align}
			where $d$ on each side denotes a suitable distance metric between  observables and between networks, respectively.
		\end{description} 
		An inequality of type \eqref{eq:stability_ineq} is called a `stability inequality' for the observable $f(\G)$, which encodes the property that a small change in the network yields small change in the observable. 
	}

	\subsection{Our approach and contribution}
	{\color{black}
		We summarize our approach and contributions in the following bullet points. 
		
		\begin{description}
			\item{$\bullet$} We propose a new network sampling framework based on sampling a graph homomorphism from a small template network $F$ into a large target network $\G$. 
			
			\item{$\bullet$} We propose two complementary MCMC algorithms for sampling random graph homomorphisms and establish bounds on their mixing times and concentration of their time averages. 
			
			\item{$\bullet$} Based on our sampling algorithms, we propose a number of network observables that are both easy to compute (using our MCMC motif-sampling algorithms) and  provably stable. 
			
			\item{$\bullet$} We demonstrate the efficacy of our techniques through various synthetic and real-world networks. For instance, \commHL{for subgraph classification problems on Facebook social networks, our Matrix of Average Clustering Coefficient (MACC) achieves performance better than} 
			the benchmark methods (see Figure \ref{fig:intro_sim_MACC} and Section \ref{section:FB}). 
		\end{description}
	}
	
	\commHL{The key insight in our approach is to sample adjacency-preserving functions from small graphs to large networks, instead of directly sampling subgraphs. Namely, suppose $\G=(V,E_{\G})$ is a large and possibly sparse graph and $F=(\{1,\dots,k\}, E_{F})$ is a $k$-node template graph. A vertex map $\x:\{1,\dots,k\}\rightarrow V$ is said to be a (graph) \textit{homomorphism} $F\rightarrow \G$ if it preserves  adjacency relations, that is, $\x(i)$ and $\x(j)$ are adjacent in $\G$ if $i$ and $j$ are adjacent in $F$. Our main goal then becomes the following: 
		\begin{align}\label{eq:motif_sampling_intro}
			\textit{Sample a graph homomorphism $\x:F\rightarrow \G$ uniformly at random.}
		\end{align}
		We consider the above problem in the general context where $\G$ is a network with edge weights equipped with a probability distribution on the nodes. }

	
	\commHL{To tackle the homomorphism sampling problem \eqref{eq:motif_sampling_intro}, we propose two complementary Markov Chain Monte Carlo algorithms. In other words, the algorithms proceed by sampling a Markov chain of graph homomorphisms $\x_{t}:F\rightarrow \G$ in a way such that the empirical distribution of $\x_{t}$ converges to the desired target distribution. }
	
	\commHL{Our network observables based on motif sampling will be of the following form: 
		\begin{align}
			f(\G):=\P( \textit{A uniformly random homomorphism $\x:F\rightarrow \G$ satisfies a property $P$}).
		\end{align}
		For instance, the well-known \emph{average clustering coefficient}  network observable  can be realized in the form above (see Example \ref{ex:avg_clustering_coeff}), which we generalize to \textit{conditional homomorphism densities} (see Section \ref{section:observables_def}). By taking the expectation of some function of the random homomorphism $\x$, we can also define not only real-valued network observables, 
		but also function- (see Figure \ref{fig:intro_sim}), matrix- (see Figure \ref{fig:intro_sim_MACC}), and even network-valued observables. These observables can all be efficiently (and provably) computed by taking suitable time averages along the MCMC trajectory of \commHL{the MCMC motif sampling procedure} 
		(see Theorems \ref{thm:McDiarmids} and \ref{thm:vector_concentration}).
	} Furthermore, we establish that these network observables are stable in the sense that a small change in the network results in a small change in their values (see Section \ref{section:stability_inequalities}). Our new network observables are not vanishingly small for sparse networks and are able to capture multi-scale features. Moreover, they can directly be applied to comparing networks with different sizes without node labels (e.g., comparing two social networks with anonymous users or brain networks of two species) with low computational cost. 
	
	\begin{figure*}[h]
		\centering
		\includegraphics[width=1 \linewidth]{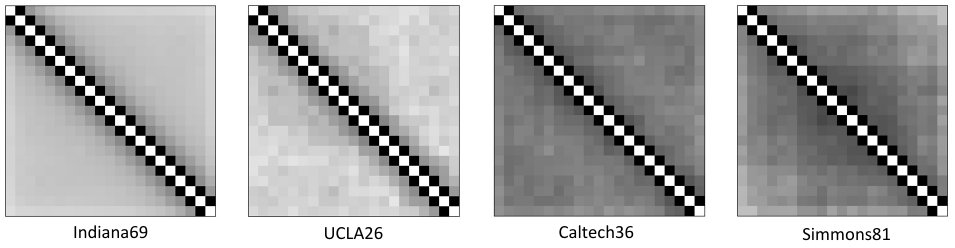}
		\caption{ Matrices of Average Clustering Coefficients of the Facebook network corresponding to four schools in the Facebook100 dataset \commHL{using the chain motif of $21$ nodes. The $21\times 21$ matrices are summarizing observables of the corresponding Facebook networks.} See Figure \ref{fig:FB_MACC} for more details. 
		}
		\label{fig:intro_sim_MACC}
	\end{figure*}

	To demonstrate our new sampling technique and Network Data analysis framework, we apply our framework for network clustering and classification problems using the Facebook100 dataset and Word Adjacency Networks of a set of classic novels. Our new matrix-valued network observable compresses a given network of arbitrary size without node label into a fixed size matrix, which reveals local clustering structures of the network in any desired scale (see Figure \ref{fig:intro_sim_MACC}). We use these low-dimensional representations to perform \commHL{subgraph classification} and hierarchical clustering of the 100 network data. \commHL{For the former supervised task, our proposed method shows significantly better performance than the baseline methods.} On the other hand, we analyze the hierarchical structure of weighted networks representing text data using our function-valued observable. The obtained observables indicate similar hierarchical structures among different texts by the same author that are distinctive between different authors.

	\begin{figure*}[h]
		\centering
		\includegraphics[width=1 \linewidth]{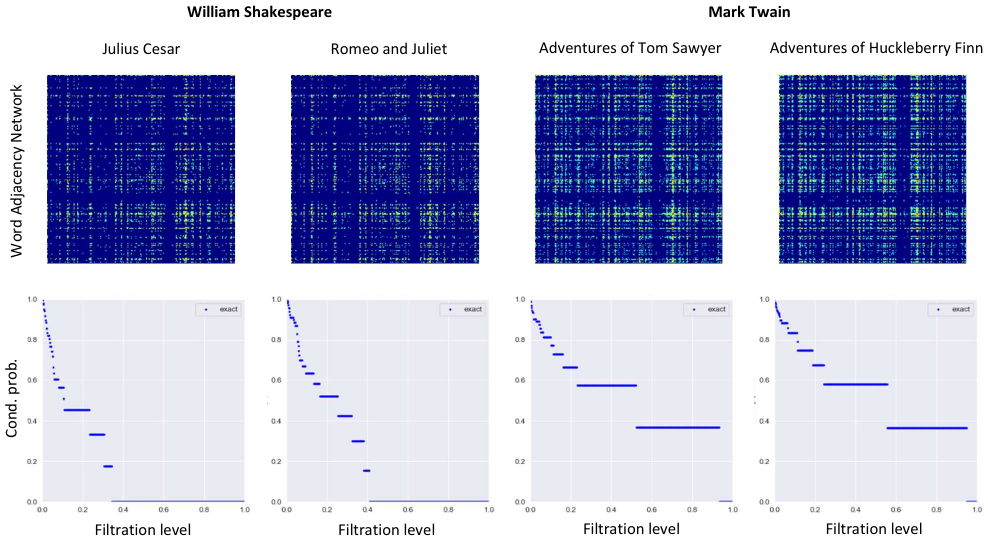}
		\caption{ Heat map of the Word Adjacency Networks of four novels and their CHD profiles corresponding to the pair of motifs $(H_{0,0},H_{0,0})$, $H_{0,0} = (\{0\}, \mathbf{1}_{\{(0,0)\}})$. \commHL{The non-increasing functions in the second row  summarize the observables of the networks shown in the first row. See Section \ref{subsection:CHD_WAN} for details.}
		}
		\label{fig:intro_sim}
	\end{figure*}

	\vspace{0.3cm}
	\subsection{Background and related work}
	
	\commHL{The motif sampling problem from \eqref{eq:motif_sampling_intro}  generalizes the well-known problem of sampling a proper coloring of a given graph uniformly at random. 
		Recall that a proper $q$-coloring of a simple graph $G=(V,E)$ is an assignment of colors $\x:V\rightarrow \{1,\dots,q\}$ such that $\x(u)\ne \x(v)$ whenever nodes $u$ and $v$ are adjacent in $G$. This is in fact a graph homomorphism $G\rightarrow K_{q}$, where $K_{q}$ is the complete graph of $q$ nodes. Indeed, in order to preserve the adjacency, any two adjacent nodes in $G$ should \textit{not} be mapped into the same node in $K_{q}$. A number of MCMC algorithms and their mixing times to sample a uniform $q$-coloring of a graph have been studied in the past few decades \citep{jerrum1995very, salas1997absence, vigoda2000improved, dyer2002very, frieze2007survey}. One of our MCMC motif sampling algorithms, the Glauber chain (see Definition \ref{def:glauber_chain}), is inspired by the standard Glauber dynamics for sampling proper $q$-coloring of graphs. }
	
	
	There is an interesting change of perspective between the graph coloring problem and motif sampling. Namely, in graph coloring $G\rightarrow K_{q}$, the problem becomes easier for large $q$ and hence the attention is toward sampling a random $q$-coloring for small $q$. On the other hand, for motif sampling, our goal is to analyze large network $\G$ through a random homomorphism $F\rightarrow \G$ from a relatively small motif $F$. It will be conceptually helpful to visualize a homomorphism $F\rightarrow \G$ as a graph-theoretic embedding of a motif $F$ into the large network $\G$.

	\commHL{Our work on constructing stable network observables from motif sampling algorithms is inspired by the graph homomorphism and graph limit theory (see, e.g., \cite{lovasz2006limits, lovasz2012large}), and by methods from Topological Data Analysis (see, e.g., \cite{carlsson2009topology, edelsbrunner2010computational}), which considers the hierarchical structure of certain observables and studies their stability properties. }

	\commHL{For an illustrative example,} let $G=(V,E)$ be a finite simple graph and let $K_{3}$ be a triangle. Choose three nodes $x_{1},x_{2},x_{3}$ independently from $V$ uniformly at random, and define an observable $\mathtt{t}(K_{3},G)$, which is called the homomorphism density of $K_{3}$ in $G$, by 
	\begin{align}\label{eq:hom_density_1}
		\mathtt{t}(K_{3},G) := \P(\text{there is an edge between $x_{i}$ and $x_{j}$ for all $1\le i<j\le 3$}).
	\end{align}  
	In words, this is the probability that three randomly chosen people from a social network are friends of each other. If we replace the triangle $K_{3}$ with an arbitrary simple graph $F$, a similar observable $\mathtt{t}(F,G)$ can be defined. Note that computing such observables can be done by repeated sampling and averaging. Moreover, a fundamental lemma due to  \cite{lovasz2006limits} asserts that the homomorphism densities are stable with respect to the cut distance between graphs (or graphons, in general):
	\begin{equation}\label{ineq:countinglemma}
		| \mathtt{t}(F,G_{1})-\mathtt{t}(F,G_{2}) | \le |E_{F}| \cdot \delta_{\square}(G_{1},G_{2}),
	\end{equation} 
	where $G_{1},G_{2}$ are simple graphs and $E_{F}$ is the set of edges in $F$. Hence by varying $F$, we obtain a family of observables that satisfy the computability and stability (note that we can absorb the constant $|E_{F}|$ into the cut distance $\delta_{\square}$). 
	
	However, there are two notable shortcomings of homomorphism densities as network observables. First, they provide no useful information for sparse networks, where the average degree is of order sublinear in the number of nodes (e.g., two-dimensional lattices, trees, most real-world social networks \citep{barabasi2013network, newman2018networks}). This is because for sparse networks the independent sampling outputs a set of non-adjacent nodes with high probability. In terms of the stability inequality \eqref{ineq:countinglemma}, this is reflected in the fact that the cut distance $\delta_{\square}$ between two sparse networks becomes asymptotically zero as the sizes of networks tend to infinity. \commHL{Second, homomorphism densities do not capture hierarchical features of weighted networks. Namely, we might be interested in how the density of triangles formed through edges of weights at least $t$ changes as we increase the parameter $t$. But the homomorphism density of triangles aggregates such information into a single numeric value, \commHL{which is independent of $t$.} }

	An entirely different approach is taken in the fields of Topological Data Analysis (TDA) in order to capture multi-scale features of data sets \citep{carlsson2009topology, edelsbrunner2010computational}. The essential workflow in TDA is as follows. First, a data set $X$ consisting of a finite number of points in Euclidean space $\mathbb{R}^{d}$ is given. In order to equip the data set with a topological structure, one constructs a filtration of simplicial complexes on top of $X$ by attaching a suitable set of high-dimensional cells according to the filtration parameter (spatial resolution). Then by computing the homology of the filtration (or the persistent homology of $X$), one can associate $X$ with a topological invariant $f(X)$ called the persistence diagram \citep{edelsbrunner2000topological} (or barcodes \citep{ghrist2008barcodes}). The stability of such observable is well-known \citep{cohen2007stability, chazal2009gromov}. Namely, it holds that 
	\begin{align}
		d_{B}(f(X),f(Y)) \le d_{GH}(X,Y),
	\end{align}
	where the distance metric on the left and right-hand side denotes the bottleneck distance between persistence diagrams and the Gromov-Hausdorff distance between data sets $X$ and $Y$ viewed as finite metric spaces. However, as is well known in the TDA community, computing persistence diagrams for large data sets is computationally expensive (see 
	\cite{edelsbrunner2000topological, zomorodian2005computing} for earlier algorithms and \cite{carlsson2009topology,edelsbrunner2012persistent,otter2017roadmap,memoli2019primer} for recent surveys).  
	
	\commHL{Whereas in the present work we  concentrate on \textit{symmetric networks}, where the edge weight between two nodes $x$ and $y$ does not depend on their ordering, we acknowledge that in the context of asymmetric networks, several possible observables $f$ and a suitable metric are studied in \citep{samir-dn-1,samir-dn-2,samir-dowker,turner2019rips,samir-pph,samir-dgw-nets}.}
	
	\commHL{We also remark that an earlier version of the present work has already found several applications in the literature of network data analysis. The MCMC motif sampling algorithms as well as their theoretical guarantees were used as a key component in the recent network dictionary learning methods of~\citep{lyu2021learning, lyu2020online, peng2022inferring}. Also, a MCMC $k$-path sampling algorithm was used to generate sub-texts within knowledge graphs for topic modeling applications \citep{alaverdian2020killed}. The same algorithm was used to benchmark stochastic proximal gradient descent algorithms for Markovian data in
		\citep{alacaoglu2022convergence}. }

	\subsection{Organization}
	{\color{black}
		We formally introduce the motif sampling problem on networks in Section \ref{subsection:random_hom} and discuss a concrete example of such sampling scheme in the form of subgraph sampling via Hamiltonian paths in Section \ref{section:sampling_hamiltonian}. In Section \ref{section:MCMC}, we introduce two Markov chain Monte Carlo (MCMC) algorithms for motif sampling. Their convergence is stated in Theorems \ref{thm:stationary_measure_Glauber} and \ref{thm:stationary_measure_pivot} and their mixing time bounds are stated in Theorems \ref{thm:gen_coloring_mixing_tree} and \ref{thm:pivot_chain_mixing}. We also deduce that the expected value of various functions of the random homomorphism can be efficiently computed by time averages of the MCMC trajectory (see Corollary \ref{cor:time_avg_observables}). Moreover, these estimates are guaranteed to be close to the expected value according to the concentration inequalities that we obtain in Theorems \ref{thm:McDiarmids} and \ref{thm:vector_concentration}.
		
		In Section \ref{section:observables_def}, we introduce four network observables (Conditional Homomormorphism Density, Matrix of Average Clustering Coefficients, CHD profile, and motif transform) by taking the expected value of suitable functions of random homomorphism $F\rightarrow \G$. We also provide some elementary examples. In Section \ref{section:stability_inequalities}, we state stability inequalities (Propositions \ref{prop:conditioned_counting_lemma}, \ref{prop:stability_Ftransform}, and Theorem \ref{thm:counting_filtration}) for our network observables using the language of graphons and the cut distance. 
		
		Sections \ref{section:examples} and \ref{section:applications} are devoted to examples and applications of our framework. In Section \ref{section:examples},  we provide various examples and simulations demonstrating our results on synthetic networks. In Section \ref{section:FB}, we apply our methods to the Facebook social network for the tasks of subgraph classification and hierarchical clustering. In Section \ref{section:applications}, we apply our framework to analyze Word Adjacency Networks of a set consisting of 45 novels  and propose an authorship attribution scheme using motif sampling and conditional homomorphism profiles.

		Finally, we provide additional discussions, examples, proofs, and figures in the appendices. In Appendix \ref{section:motif_transform_spectral}, we discuss the relationship between motif transforms and spectral analysis. In Appendices \ref{section:proofs_mixing} and \ref{section:proofs_stability}, we prove convergence, mixing time bounds, and concentration of the MCMC algorithms as well as the stability inequalities of our network observables. 
	}
	
	\subsection{Notation}
	
	For each integer $n\ge 1$, we write $[n]=\{1,2,\cdots, n\}$. Given a matrix $A:[n]^{2}\rightarrow [0,\infty)$, we call the pair $G=([n],A)$ an \textit{edge-weighted graph} with \textit{node set} $[n]$ and \text{edge weight} $A$. When $A$ is 0-1 valued, we call $G$ a \textit{directed graph} and we also write $G=([n],E)$, where $E=\{(i,j)\in [n]^{2}\,|\, A(i,j)=1\}$ is the set of all directed edges. If $A$ is 0-1 valued, symmetric, and has all diagonal entries equal to $0$, then we call $G$ a \textit{simple graph}. Given an edge-weighted graph $G=([n],A)$, define its \textit{maximum degree} by 
	\begin{align}\label{eq:def_max_deg_edge_weighted_graph}
		\Delta(G) = \max_{a\in [n]} \sum_{b\in [n]} \mathbf{1}\big(A(a,b)+A(b,a)>0\big).
	\end{align}  
	A sequence $(x_{j})_{j=0}^{m}$ of nodes in $G$ is called a \textit{walk of length $m$} if $A(x_{j},x_{j+1})>0$ for all $0\le j <m$. A walk is a \textit{path} if all nodes in the walk are distinct. We define the \textit{diameter} of $G$, which we denote by $\text{diam}(G)$, by 
	\begin{align}
		\text{diam}(G) = \max_{a,b\in [n]} \min \{ k\ge 0\,|\, \text{$\exists$ a path of length $k$ between $a$ and $b$} \}.
	\end{align}
	We let $\diam(G)=\infty$ if there is no path between some $x,y\in [n]$.

	{For an event $B$, we let $\mathbf{1}_{B}$ denote the indicator function of $B$, where $\mathbf{1}_{B}(\omega)=1$ if $\omega\in B$ and 0 otherwise. We also write $\mathbf{1}_{B}=\mathbf{1}(B)$ when convenient.}  For two real numbers $a,b\in \mathbb{R}$, we  write $a\lor b = \max(a,b)$ and $a\land b = \min(a,b)$.

	\vspace{0.4cm}
	\section{Motif sampling and MCMC sampling algorithms}
	\label{section:motif_sampling_MCMC}
	
	\subsection{Random homomorphism from motifs into networks}
	\label{subsection:random_hom}
	
	To describe motif sampling, we first give precise definitions of networks and motifs. A \textit{network} as a mathematical object consists of a triple $\G=(X,A,\alpha)$, where $X$, a finite set,  is the node set of individuals, $A:X^{2}\rightarrow [0,\infty)$ is a matrix describing interaction strength between individuals, and $\alpha:X\rightarrow (0,1]$ is a probability measure on $X$ giving the significance of each individual (cf. \citet{samir-dgw-nets}). Any given $(n\times n)$ matrix $A$ taking values from $[0,1]$ can be regarded as a network $([n],A,\alpha)$ where $\alpha(i)\equiv 1/n$ is the uniform distribution on $[n]$.

	Fix an integer $k\ge 1$ and a matrix $A_{F}:[k]^{2}\rightarrow [0,\infty)$. Let $F=([k],A_{F})$ denote the corresponding edge-weighted graph, which we also call a \textit{motif}. A motif $F=([k],A_{F})$ is said to be \textit{simple} if $A_{F}$ is 0-1 valued, has zero diagonal entries (no loops), and $A_{F}(i,j)+A_{F}(j,i)\in \{0,1\}$ for each $1\le i<j\le k$ (see Figure \ref{fig:motif} for an illustration). The fact that simple motifs have at most one directed edge between any pair of nodes is crucial in the proof of stability inequalities of the network observables stated in Section  \ref{section:stability_inequalities}. A particularly important motif for application purposes is the \textit{$k$-chain}, which is the pair $([k], \mathbf{1}_{\{(1,2),(2,3),\dots,(k-1,k)})$. It corresponds to the direct path on $k$ nodes, see Figure \ref{fig:motif} (c).

	\begin{figure*}[h]
		\centering
		\includegraphics[width=0.9 \linewidth]{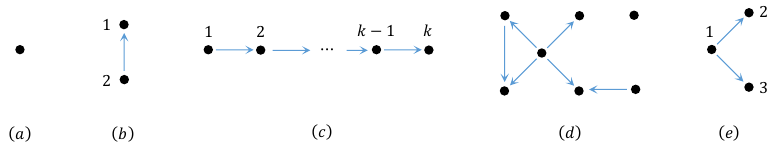}
		\caption{  \commHL{Examples of simple motifs. Motifs may contain no edge (a) or multiple connected components (d). The motif in (c) forms a directed path on $k$ nodes, which we call the `$k$-chain'.}
		}
		\label{fig:motif}
	\end{figure*}
	
	For a given motif $F=([k],A_{F})$ and a $n$-node network $\G=([n],A,\alpha)$, we introduce the following probability distribution $\pi_{F\rightarrow \G}$ on the set $[n]^{[k]}$ of all vertex maps $\mathbf{x}:[k]\rightarrow [n]$ by 
	\begin{equation}\label{eq:def_embedding_F_N}
		\pi_{F\rightarrow \G}( \mathbf{x} ) = \frac{1}{\mathtt{Z}}  \left( \prod_{1\le i,j\le k}  A(\mathbf{x}(i),\mathbf{x}(j))^{A_{F}(i,j)} \right)\, \alpha(\mathbf{x}(1))\cdots \alpha(\mathbf{x}(k)),
	\end{equation}  
	where the normalizing constant $\mathtt{Z}$ is given by 
	\begin{equation}\label{eq:def_partition_function}
		\mathtt{Z}=\mathtt{t}(F,\G):=\sum_{\x:[k]\rightarrow [n]} \left( \prod_{1\le i,j\le k}  A(\mathbf{x}(i),\mathbf{x}(j))^{A_{F}(i,j)} \right) \, \alpha(\mathbf{x}(1))\cdots \alpha(\mathbf{x}(k)).
	\end{equation}
	We call a random vertex map $\x:[k]\rightarrow [n]$ distributed as $\pi_{F\rightarrow \G}$ a \textit{random homomorphism} from $F$ to $\G$.  A vertex map $\mathbf{x}:[k]\rightarrow [n]$ is a (graph) \textit{homomorphism} $F\rightarrow \G$ if $\pi_{F\rightarrow \G}(\mathbf{x})>0$. Hence $\pi_{F\rightarrow \G}$ is a probability measure on the set of all homomorphisms $F\rightarrow \G$. The above quantity $\mathtt{t}(F,\G)$ is known as the \textit{homomorphism density} of $F$ in $\G$. We now formally introduce the problem of motif sampling.
	
	{\color{black}
		\begin{problem}[Motif sampling from networks]\label{prob:motif_sampling}
			For a given motif $F=([k],A_{F})$ and an  $n$-node network $\G=([n],A,\alpha)$, sample a homomorphism $\x:F\rightarrow \G$ according to the probability distribution $\pi_{F\rightarrow \G}$ in \eqref{eq:def_embedding_F_N}.  
		\end{problem}
		
		An important special case of \eqref{prob:motif_sampling} is when $\G$ is a simple graph. Let $G=([n],A)$ be a simple graph. Then for each vertex map $\x:[k]\rightarrow [n]$, note that 
		\begin{align}\label{eq:hom_product_simple_indicator}
			\prod_{1\le i,j\le k}  A(\mathbf{x}(i),\mathbf{x}(j))^{A_{F}(i,j)} = \mathbf{1}\left(\textup{for all $(i,j)$ with $A_{F}(i,j)=1$ and $A(\x(i),\x(j))=1$}\right). 
		\end{align}
		Whenever the indicator on the right-hand side above equals one, we say $\x$ is a homomorphism $F\rightarrow G$. That is, $\x$ maps an edge in $F$ to an edge in $G$. Note that $\x$ need not be injective, so different edges in $F$ can be mapped to the same edge in $G$. This leads us to the problem of motif sampling from graphs as described below.
		
		\begin{problem}[Motif sampling from graphs]\label{prob:motif_sampling_graph}
			For a given motif $F=([k],A_{F})$ and a $n$-node simple graph $G=([n],A)$, sample a homomorphism $\x:F\rightarrow G$ uniformly at random.
		\end{problem}
		
		The Problem \ref{prob:motif_sampling_graph} is indeed a special case Problem \ref{prob:motif_sampling} by identifying the simple graph $G=([n],A)$ with the network $\G=([n], A, \alpha)$, where $\alpha$ is the uniform node weight (i.e., $\alpha(i)\equiv 1/n$ for $i=1,\dots,n$). Then due to \eqref{eq:hom_product_simple_indicator}, the probability distribution $\pi_{F\rightarrow \G}$ in \eqref{eq:def_embedding_F_N} becomes the uniform distribution on the set of all homomorphisms $F\rightarrow \G$. }
	
	\subsection{Sampling subgraphs with uniform Hamiltonian path}\label{section:sampling_hamiltonian}
	
	{\color{black}
		In order to provide some concrete application contexts for the motif sampling problems posed above, here we consider the problem sampling connected subgraphs from sparse graphs. Computing a large number of $k$-node subgraphs from a given  network is an essential task in modern network analysis, such as in computing `network motifs'~\citep{milo2002network} and `latent motifs'~\citep{lyu2021learning, lyu2020online, peng2022inferring} and in topic modeling on knowledge graphs \citep{alaverdian2020killed}.
		
		We consider the  random sampling of $k$-node subgraphs that we obtain by uniformly randomly sampling a `$k$-path' from a network and taking the induced subgraph on the sampled nodes. This subgraph sampling procedure is summarized below. (See Figure \ref{fig:sampling} for an illustration.) Here, a \textit{$k$-path} is a subgraph that consists of $k$ distinct nodes, with the $i$th node adjacent to the $(i+1)$th node for all $i \in \{1,\ldots,k-1\}$. \commHL{A path $P$ in a graph $G$ is a \textit{Hamiltonian path} if $P$ contains all nodes of $G$.} 
		\begin{description}[itemsep=0.1cm]
			\item{\textit{Sampling subgraphs 
					via  uniform Hamiltonian paths}.} 
			\item{\quad Given a simple graph $G=([n], A)$ and an integer $1\le k\le \textup{diam}(G)$:}
			\item[\qquad (1)] Sample a $k$-path $P\subseteq G$  uniformly at random; 

			\item[\qquad (2)] Return the $k$-node induced subgraph $H$ of $G$ on the nodes in $P$. 
		\end{description}
		Above, sampling a subgraph induced by a $k$-path serves two purposes: (1) It ensures that the sampled $k$-node induced subgraph is connected with the minimum number of imposed edges; and (2) it induces a natural node ordering of the $k$-node induced subgraph. When applied to sparse networks, such $k$-node subgraphs are not likely to possess many other Hamiltonian paths, so ordering the nodes using the sampled Hamiltonian path provides a canonical representation of the subgraphs as their $k\times k$ adjacency matrices (e.g., see Figure \ref{fig:sampling} (c)). This is an important computational advantage of sampling $k$-node subgraphs via Hamiltonian paths over neighborhood sampling. In the latter, there is no canonical choice of node ordering out of $k!$ ways so there is a large number of equivalent adjacency matrix representations for the same subgraph.

		\begin{figure*}[h]
			\centering
			\includegraphics[width=1 \linewidth]{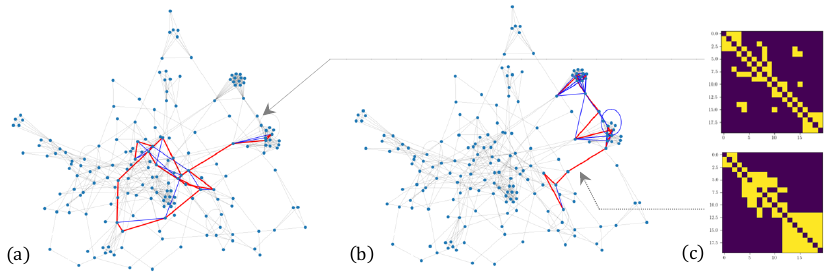}
			\caption{\commHL{Illustration of motif sampling with chain motif of $k=20$ nodes. Two instances of injective homomorphisms from a path of 20 nodes into the same network are shown in panels (a) and (b), which are depicted as paths of $k$ nodes with red edges.  Panel (c) shows the $k\times k$ adjacency matrix of the induced subgraph on these $k$-paths. }
			}
			\label{fig:sampling}
		\end{figure*}

		The $k$-node subgraph induced on such a uniformly random $k$-path is guaranteed to be connected and can exhibit diverse connection patterns (see Figure~\ref{fig:subgraphs_ex}), depending on the structure of the original network. 
		
		\begin{figure*}[h]
			\centering
			\includegraphics[width=1 \linewidth]{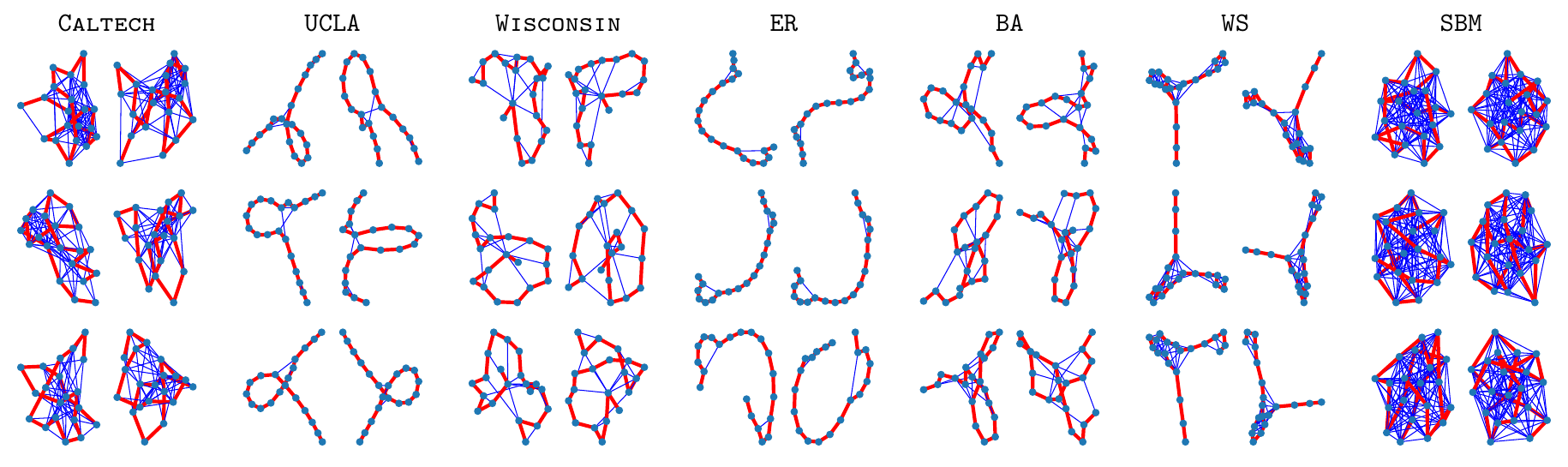}
			\caption{ \commHL{Examples of 20-node subgraphs  \commHL{induced through}  Hamiltonian paths on three Facebook social networks \citep{traud2012social} and on synthetic networks generated according to the following models: the Erd\H{o}s--R\'{e}nyi (\dataset{ER}) \citep{erdds1959random}, Barab\'{a}si--Albert (\dataset{BA}) \citep{barabasi1999emergence} Watts--Strogatz (\dataset{WS}) \citep{watts1998collective}, and stochastic-block-model (\dataset{SBM}) \citep{holland1983stochastic}.	
					For each subgraph, its Hamiltonian path (with red edges) is sampled uniformly at random by using the Glauber chain algorithm (see Def. \ref{def:glauber_chain}). }
			}
			\label{fig:subgraphs_ex}
		\end{figure*}
		
		The key sampling problem in the above subgraph sampling scheme is to sample a $k$-path uniformly at random from a graph $G$. A naive way to do so is to use rejection sampling together with independent sampling. That is, one can repeatedly sample a set $\{x_{1},\dots,x_{k}\}$ of $k$ distinct nodes in $G$ independently and uniformly at random until there is a path on the sequence $(x_{1},\dots,x_{k})$ (i.e., $x_{i}$ and $x_{i}$ are adjacent for $i=1,\dots,k-1$). However, when $G$ is sparse (i.e., when the number of edges in $G$ is much less than $n^{2}$), the probability that a randomly chosen node set $(x_{1},\dots,x_{k})$ forms a $k$-path is extremely small, so this procedure  might suffer a large number of rejections until finding a $k$-path.  
		
		We propose to use motif sampling with $k$-chain motifs to address this problem of sampling a $k$-path uniformly at random. Let $F=([k], \mathbf{1}_{\{ (1,2),(2,3),\dots,(k-1,k) \}})$ denote a $k$-chain motif. Consider the problem sampling a homomorphism $\x:F\rightarrow G$ uniformly at random with the additional constraint that $\x$ be injective, that is, the nodes $\x(1),\dots,\x(k)$ are distinct. {When $\x:F\rightarrow \G$ is an injective homomorphism}, we denote $\x:F\hookrightarrow \G$. This would give us a uniformly random $k$-path on the node set $\{\x(1),\dots,\x(k)\}$. Letting $\pi_{F\hookrightarrow \G}$ denote the probability distribution on the set of all injective homomorphisms $F\rightarrow G$, we can write 
		\begin{equation}\label{eq:def_embedding_F_N_inj}
			\pi_{F\hookrightarrow \G}( \mathbf{x} ) := C \,  \pi_{F\rightarrow \G}( \mathbf{x} )  \cdot \one(\textup{$\x(1),\ldots,\x(k)$ are distinct})\,,
		\end{equation}  
		where $C > 0$ is a normalization constant.  The probability distribution \eqref{eq:def_embedding_F_N_inj} is well-defined as long as there exists an injective homomorphism $\x: F \rightarrow \G$. For instance, if $F$ is a $k$-chain motif for $k\ge 4$ and if $\G$ is a star graph, then there is no injective homomorphism $\x: F \rightarrow \G$ and the probability distribution \eqref{eq:def_embedding_F_N_inj} is not well-defined. 
		
		The identity \eqref{eq:def_embedding_F_N_inj} suggests that, if we can sample a homomorphism $\x:F\rightarrow \G$ uniformly at random efficiently, then we can sample a sequence of homomorphisms $\x_{1},\dots,\x_{m}:F\rightarrow G$ uniformly at random until the first time $m$ such that $\x_{m}$ is injective. Note that the probability of uniformly random homomorphism $\x:F\rightarrow G$ being injective is not vanishingly small even if $G$ is sparse. In Section~\ref{section:MCMC}, we provide two MCMC 
		sampling algorithms for sampling a homomorphism $F\rightarrow G$ uniformly at random. We remark that this sampling scheme is a crucial component in the recent development of network dictionary learning methods~\citep{lyu2021learning, lyu2020online, peng2022inferring}.
	}

	\subsection{MCMC algorithms for motif sampling}
	\label{section:MCMC}
	
	Note that computing the measure $\pi_{F\rightarrow \G}$ according to its definition is computationally expensive, especially when the network $\G$ is large. In this subsection, we give efficient randomized algorithms to sample a random homomorphism $F\rightarrow \G$ from the measure $\pi_{F\rightarrow \G}$ by a Markov chain Monte Carlo method. Namely, we seek for a Markov chain $(\x_{t})_{t\ge 0}$ evolving in the space $[n]^{[k]}$ of vertex maps $[k]\rightarrow [n]$ such that each $\x_{t}$ is a homomorphism $F\rightarrow \G$ and the chain $(\x_{t})_{t\ge 0}$ has a unique stationary distribution given by (\ref{eq:def_embedding_F_N}). We call such a Markov chain a \textit{dynamic embedding} of $F$ into $\G$. We propose two complementary dynamic embedding schemes.

	Observe that equation (\ref{eq:def_embedding_F_N}) suggests considering a spin model on $F$ where each site $i\in [k]$ takes a discrete spin $\x(i)\in [n]$ and the probability of such discrete spin configuration $\x:[k]\rightarrow [n]$ is given by (\ref{eq:def_embedding_F_N}). This spin model interpretation naturally leads us to the following dynamic embedding in terms of the Glauber chain. \commHL{See Figure \ref{fig:Glauber} for an illustration.}

	\begin{dfn}[Glauber chain]\label{def:glauber_chain}
		Let $F=([k],A_{F})$ be a simple motif and $\G=([n],A,\alpha)$ be a network. Suppose $\mathtt{t}(F,\G)>0$ and fix a homomorphism $\x_{0}:F \rightarrow \G$.  Define a Markov chain $\x_{t}$ of homomorphisms $F\rightarrow \G$ as below.   	
		\begin{description}
			\item{(i)} Choose a node $i \in [k]$ of $F$ uniformly at random. 
			\item{(ii)} Set $\x_{t+1}(j)=\x_{t}(j)$ for $j \ne i$. Update $\x_{t}(i)=a$ to $\x_{t+1}(i)=b$ according to the transition kernel   
			\begin{align}\label{eq:Glauber_kernel}
				G(a,b)= \frac{\left( \prod_{j\ne i} A(\x_{t}(j),b)^{A_{F}(j,i)} A(b,\x_{t}(j))^{A_{F}(i,j)}    \right)  A(b,b)^{A_{F}(i,i)} \alpha(b) }{\sum_{1\le c \le n} \left( \prod_{j\ne i} A(\x_{t}(j),c)^{A_{F}(j,i)} A(c,\x_{t}(j))^{A_{F}(i,j)}    \right)  A(c,c)^{A_{F}(i,i)} \alpha(c)  },
			\end{align}  
			where the product is overall $1\le j \le k$ such that $j\ne i$. 
		\end{description}
	\end{dfn}

	\begin{figure*}[h]
		\centering
		\includegraphics[width=0.8 \linewidth]{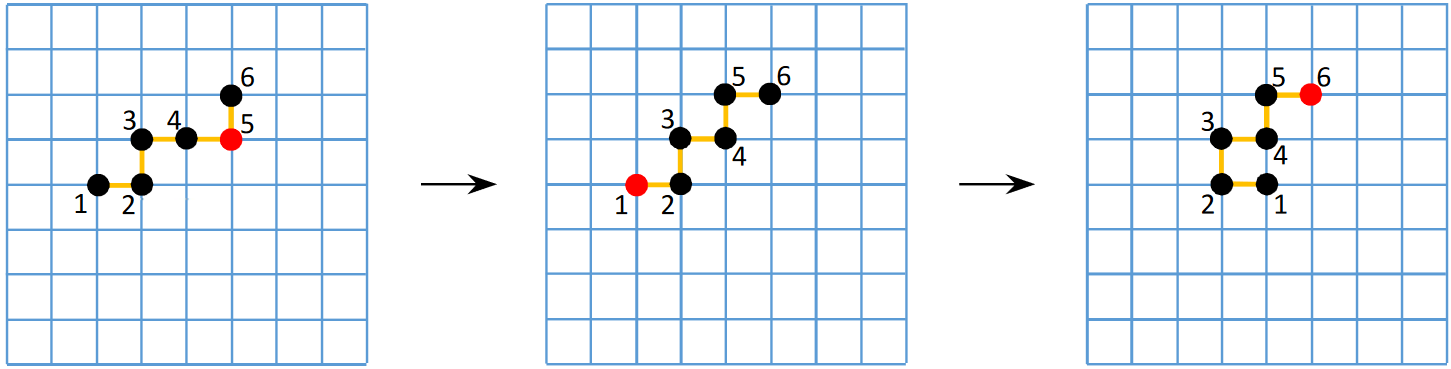}
		\caption{ Glauber chain of homomorphisms $\x_{t}:F \rightarrow \G$, where  $\G$ is the $(9\times 9)$ grid with uniform node weights and $F=([6],\mathbf{1}_{\{(1,2),(2,3),\cdots,(5,6)\} })$ is a $6$-chain. The orientations of the edges $(1,2), \ldots, (5,6)$ are suppressed in the figure. During the first transition, node $5$ is chosen with probability $1/6$ and $\x_{t}(5)$ is moved to the top left common neighbor of $\x_{t}(4)$ and $\x_{t}(6)$ with probability $1/2$. During the second transition, node $1$ is chosen with probability $1/6$ and $\x_{t+1}(1)$ is moved to the right neighbor of $\x_{t+1}(2)$ with probability $1/4$.   
		}
		\label{fig:Glauber}
	\end{figure*}

	Note that in the case of the Glauber chain, since all nodes in the motif try to move in all possible directions within the network, one can expect that it might take a long time to converge to its stationary distribution, $\pi_{F\rightarrow \G}$. To break the symmetry, we can designate a special node in the motif $F$ as \commHL{the} `pivot', and let it \commHL{`carry'} the rest of the homomorphism as it performs a simple random walk on $\G$. A canonical random walk kernel on $\G$ can be modified by the Metropolis-Hastings algorithm \commHL{(see, e.g.,~\cite[Sec. 3.2]{levin2017markov})} so that its unique stationary distribution agrees with the correct marginal distribution from the joint distribution $\pi_{F\rightarrow \G}$. We can then successively sample the rest of the embedded nodes (see Figure \ref{fig:pivot}) after each move of the pivot. We call this alternative dynamic embedding the \textit{pivot chain}. 
	
	To make a precise definition of the pivot chain, we restrict the motif $F=([k],A_{F})$ to be an edge-weighted directed tree rooted at node $1$ without loops. More precisely, suppose $A_{F}=0$ if $k=1$ and for $k\ge 2$, we assume that for each $2\le i \le k$, $A_{F}(j,i)>0$ for some unique $1\le j \le k$, $j\ne i$. In this case, we denote $j=i^{-}$ and call it the \textit{parent} of $i$. We may also assume that the other nodes in $\{ 2,\cdots,k \}$ are in a depth-first order, so that $i^{-}<i$ for all $2\le i \le k$. We can always assume such ordering \commHL{is given} by suitably permuting the vertices, if necessary. In this case, we call $F$ a \textit{rooted tree motif}.

	Now we introduce the pivot chain. See Figure \ref{fig:pivot} for an illustration.

	\begin{dfn}[Pivot chain]
		Let $F=([k],A_{F})$ be a rooted tree motif and let $\G=([n],A,\alpha)$ be a network such that for each $i\in [n]$, $A(i,j)>0$ for some $j\in [n]$. Let $\x_{0}:[k]\rightarrow [n]$ be an arbitrary homomorphism. Define a Markov chain $\x_{t}$ of homomorphisms $F\rightarrow \G$ as follows.   	
		\begin{description}
			\item{(i)} Given $\x_{t}(1)=a$, sample a node $b\in [n]$ according to the distribution $\Psi(a,\cdot\,)$, where the kernel $\Psi:[n]^{2}\rightarrow [0,1]$ is defined by  
			\begin{align}\label{eq:RWkernel_G}
				\Psi(a,b) := \frac{\alpha(a) \max(A(a,b), A(b,a))\alpha(b)}{ \sum_{c\in [n]} \alpha(a) \max(A(a,c), A(c,a))\alpha(c)  } \qquad a,b\in [n].
			\end{align}
			
			\item{(ii)} Let $\pi^{(1)}$ denote the projection of the probability distribution $\pi_{F\rightarrow \G}$ (defined at \eqref{eq:def_embedding_F_N}) onto the location of node 1. Then accept the update $a\mapsto b$ and set $\x_{t+1}(1)=b$ or reject the update and set $\x_{t+1}(1)=a$ independently with probability $\lambda$ or $1-\lambda$, respectively, where 
			\begin{align}\label{eq:pivot_acceptance_prob_gen}
				\lambda :=  \left[ \frac{\pi^{(1)}(b)}{\pi^{(1)}(a)}	\frac{\Psi(b,a)}{\Psi(a,b)} \land 1\right].
			\end{align}

			\item{(iii)} Having sampled $\x_{t+1}(1),\cdots,\x_{t+1}(i-1)\in [n]$, inductively, sample $\x_{t+1}(i)\in [n]$ according to the following conditional probability distribution 
			\begin{align}
				&\mathbb{P}(\x_{t+1}(i)=x_{i}\,|\,\x_{t+1}(1)=x_{1}, \cdots, \x_{t+1}(i-1)=x_{i-1}) \\
				&\hspace{3cm} = \frac{\left(\prod_{2\le j < i} A(x_{j^{-}},x_{j})\alpha(j) \right)A(x_{i^{-}}, x_{i})\alpha(x_{i})}{\sum_{c\in [n]} \left(\prod_{2\le j < i} A(x_{j^{-}},x_{j})\alpha(j) \right)A(x_{i^{-}}, c)\alpha(c)}.
			\end{align}  
		\end{description}
	\end{dfn}

	\begin{figure*}[h]
		\centering
		\includegraphics[width=0.8 \linewidth]{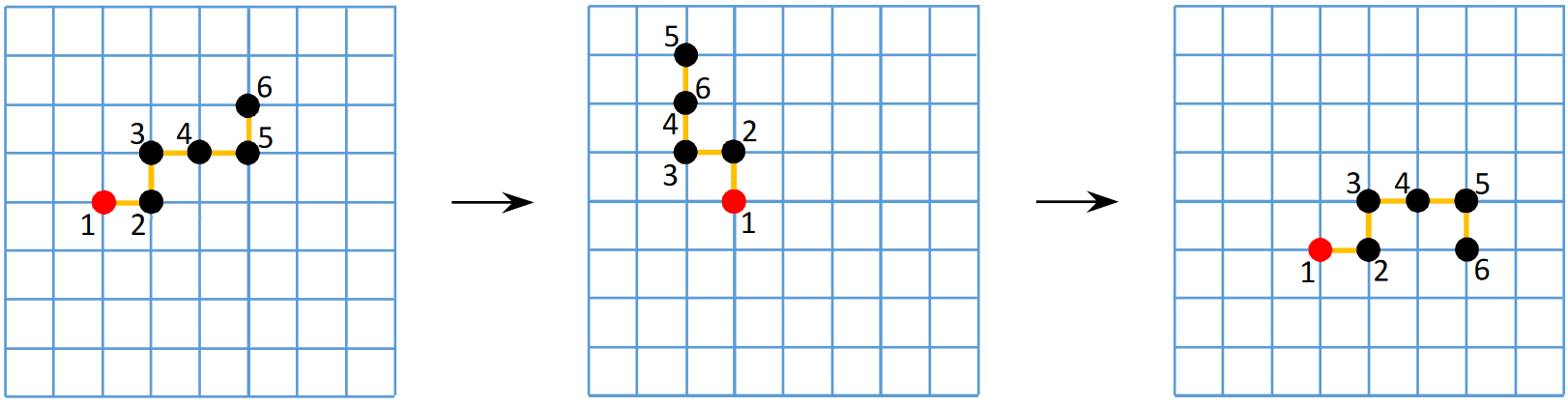}
		\caption{ Pivot chain of homomorphisms $\x_{t}:F \rightarrow \G$, where $\G$ is the $(9\times 9)$ grid with uniform node weight and $F=([6],\mathbf{1}_{\{(1,2),(2,3),\cdots,(5,6)\} })$ is a \commHL{$6$-chain}. The orientations of the edges $(1,2), \ldots, (5,6)$ are suppressed in the figure. During the first transition, the pivot $\x_{t}(1)$ moves to its right neighbor with probability $1/4$, and $\x_{t+1}(i)$ is sampled uniformly among the four neighbors of $\x_{t+1}(i-1)$ for $i=2$ to $6$. Note that $\x_{t+1}(4)=\x_{t+1}(6)$ in the middle figure. In the second transition, the pivot moves down with probability $1/4$, and again $\x_{t+1}(i)$ is sampled uniformly among the four neighbors of $\x_{t+1}(i-1)$ for $i=2$ to $6$.
		}
		\label{fig:pivot}
	\end{figure*}
	
	The tree structure of the motif $F$ is crucially used both in \commHL{steps} (ii) and (iii) of the pivot chain. Namely, computing the acceptance probability $\lambda$ in step (ii) involves computing the marginal distribution $\pi^{(1)}$ on the location of the pivot from the joint distribution $\pi_{F\rightarrow \G}$. This can be done recursively due to the tree structure of $F$, admitting a particularly simple formula when $F$ is a star or a path, see Examples \ref{ex:pivot_star} and \ref{ex:pivot_path}. 
	
	In order to explain the construction of the pivot chain, we first note that the simple random walk on $\G$ with kernel $\Psi$ defined at \eqref{eq:RWkernel_G} has the following canonical stationary distribution 
	\begin{align}
		\pi_{\G}(a) := \frac{\sum_{c\in [n]}\Psi(a,c)}{\sum_{b,c\in [n]} \Psi(b,c)} \qquad a\in [n]. 
	\end{align}
	When this random walk is irreducible, $\pi_{\G}$ is its unique stationary distribution. If we draw a random homomorphism  $\x:F\rightarrow \G$ from a rooted tree motif $F=([k],A_{F})$ into a network $\G=([n],A,\alpha)$ according to the distribution $\pi_{F\rightarrow \G}$, then for each $x_{1}\in [n]$, 
	\begin{align}\label{eq:tree_pivot_marginal}
		\pi^{(1)}(x_{1})&:=\P_{F\rightarrow \G}(\x(1)=x_{1}) \\
		& = \frac{1}{\mathtt{t}(F,\G)} \sum_{1\le x_{2},\cdots, x_{k}\le n} A(x_{2^{-}},x_{2})\cdots A(x_{k^{-}},x_{k})\alpha(x_{1})\alpha(x_{2})\cdots \alpha(x_{k}).
	\end{align}
	Hence, we may use Metropolis-Hastings algorithm \citep{liu2008monte, levin2017markov} to modify the random walk kernel $\Psi$ to $P$ so that its stationary distribution becomes $\pi^{(1)}$, where 
	\begin{align}\label{eq:MH_RW_kernel_G}
		P(a,b) =
		\begin{cases}
			\Psi(a,b) \left[ \frac{\pi^{(1)}(b)\Psi(b,a)}{\pi^{(1)}(a)\Psi(a,b)} \land 1 \right] & \text{if $b\ne a$} \\
			1 - \sum_{c:c\ne a} \Psi(a,c)\left[ \frac{\pi^{(1)}(c)\Psi(c,a)}{\pi^{(1)}(a)\Psi(a,c)} \land 1 \right] & \text{if $b=a$.}
		\end{cases}
	\end{align}
	This new kernel $P$ can be executed by steps (i)-(ii) in the definition of the pivot chain. 

	\commHL{In the following examples, we consider particular instances of the pivot chain for embedding paths and stars and compute the corresponding acceptance probabilities.}
	
	\begin{ex}[Pivot chain for embedding stars]
		\label{ex:pivot_star}
		\textup{
			Let $F=([k], \mathbf{1}_{\{ (1,2),(1,3),\cdots, (1,k) \}})$ be the `star' motif centered at node $1$ (e.g., (a)-(c) in Figure \ref{fig:motif}). Embedding a star into a network gives important network observables such as the transitivity ratio and average clustering coefficient (see Example \ref{ex:avg_clustering_coeff}). In this case, the marginal distribution $\pi^{(1)}$ of the pivot in \eqref{eq:tree_pivot_marginal} simplifies into
			\begin{align}
				\pi^{(1)}(x_{1}) = \frac{\alpha(x_{1})}{\mathtt{t}(F,\G)} \left( \sum_{c\in [n]} A(x_{1},c)\alpha(c) \right)^{k-1}.
			\end{align}
			Accordingly, the acceptance probability $\lambda$ in \eqref{eq:pivot_acceptance_prob_gen} becomes 
			\begin{align}\label{eq:pivot_acceptance_prob}
				\lambda =  \left[ \frac{\alpha(b) \left( \sum_{c\in [n]} A(b,c)\alpha(c) \right)^{k-1}}{\alpha(a) \left( \sum_{c\in [n]} A(a,c)\alpha(c) \right)^{k-1}}	\frac{\Psi(b,a)}{\Psi(a,b)} \land 1\right].
			\end{align}
			For a further simplicity, suppose that the network $\G=([n],A,\alpha)$ is such that $A$ is symmetric and $\alpha\equiv 1/n$. In this case, the random walk kernel $\Psi$ and the acceptance probability $\lambda$ for the pivot chain simplify as 
			\begin{align}
				\Psi(a,b) = \frac{ A(a,b)}{ \sum_{c\in [n]}  A(a,c)  } \quad a,b\in [n], \qquad  \lambda = \left[ \frac{\left( \sum_{c\in [n]}  A(b,c) \right)  ^{k-2} } {\left( \sum_{c\in [n]}  A(a,c) \right)  ^{k-2}} \land 1 \right].
			\end{align} 
			In particular, if $F=([2],\mathbf{1}_{\{(0,1)\}})$, then $\lambda\equiv 1$ and the pivot $\x_{t}(1)$ performs the simple random walk on $\G$ given by the kernel $\Psi(a,b)\propto A(a,b)$ \commHL{without} rejection. }$\hfill \blacktriangle$
	\end{ex}

	\begin{ex}[Pivot chain for embedding paths]
		\label{ex:pivot_path}
		\textup{
			Suppose for simplicity that the node weight $\alpha$ on the network $\G=([n],A,\alpha)$ is uniform and let  $F=([k], \mathbf{1}_{\{ (1,2),(2,3),\cdots, (k-1,k) \}})$ be  a $k$-chain motif. Draw a random homomorphism  $\x:F\rightarrow \G$ from the distribution $\pi_{F\rightarrow \G}$. Then the marginal distribution $\pi^{(1)}$ of the pivot in \eqref{eq:tree_pivot_marginal} simplifies into
			\begin{align}
				\pi^{(1)}(x_{1}) & = \frac{n^{-k}}{\mathtt{t}(F,\G)} \sum_{c\in [n]} A^{k-1}(x_{1},c).
			\end{align}
			Hence the acceptance probability in step (ii) of the pivot chain becomes
			\begin{align}\label{eq:pivot_acceptance_prob_rmk}
				\lambda =  \left[ \frac{ \sum_{c\in [n]} A^{k-1}(b,c) }{ \sum_{c\in [n]} A^{k-1}(a,c) }\frac{\Psi(b,a)}{\Psi(a,b)} \land 1\right],
			\end{align}
			which involves computing powers of the matrix $A$ up to the length of the path $F$.} $\hfill \blacktriangle$
	\end{ex}	
	
	{\color{black}
		\begin{rmkk}[Comparison between the Glauber and the pivot chains]\label{rmk:pivot_chain_computational_cost}
			\normalfont
			Here we compare various aspects of the Glauber and the pivot chains. 
			\begin{description}
				\item{\textit{(Per-iteration complexity)}} The Glauber chain is much cheaper than the pivot chain per iteration for bounded degree networks. Note that in each step of the Glauber chain, the transition kernel in \eqref{eq:Glauber_kernel} can be computed in at most $O(\Delta(\G)k^{2})$ steps in general, where $\Delta(\G)$ denotes the `maximum degree' of $\G$, which we understand as the maximum degree of the edge-weighted graph $([n],A)$ as defined at \eqref{eq:def_max_deg_edge_weighted_graph}.
				
				For the pivot chain, from the computations in Examples \ref{ex:pivot_star} and \ref{ex:pivot_path}, one can easily generalize the formula for the acceptance probability $\lambda$ recursively when $F$ is a general directed tree motif. This will involve computing powers of $A$ up to the depth of the tree. More precisely, the computational cost of each step of the pivot chain is of order $\Delta(\G)^{\ell \Delta(F)}$, where $\Delta(\G)$ and $\Delta(F)$ denote the maximum degree of $\G$ and $F$ (defined at \eqref{eq:def_max_deg_edge_weighted_graph}) and $\ell$ denotes the depth of $F$. Unlike the Glauber chain, this could be exponentially large in the depth of $F$ even when $\G$ and $F$ have bounded maximum degrees. 
				
				\item{\textit{(Iteration complexity (or mixing time))}} The pivot chain requires much less iterations to mix to the stationary distribution than the Glauber chain for sparse networks. In Theorem \ref{thm:pivot_chain_mixing}, we show that the mixing time of the pivot chain is about the same as the standard random walk on networks. In Theorem \ref{thm:gen_coloring_mixing_tree}, we show that the Glauber chain mixes fast for dense networks. However, if $\G$ is sparse, we do not have a good mixing bound and we expect the chain may mix slowly.   
				
				\item{\textit{(Sampling $k$-chain motifs from sparse networks)}} For the problem of sampling $k$-chain motifs from sparse networks, we recommend using the pivot chain but with an approximate computation of the acceptance probability. For instance, taking only a bounded number of powers of the weight matrix $A$ in \eqref{eq:pivot_acceptance_prob_rmk} seems to work well in practice. 
				
				\item{\textit{(Sampling motifs from dense networks)}} For sampling general motifs (not necessarily trees) from dense networks, we recommend to use the Glauber chain. 
			\end{description}
		\end{rmkk}
	}

	\subsection{Convergence and mixing of Glauber/pivot chains}

	In this subsection, we state convergence results for the Glauber and pivot chains.

	We say a network $\G=([n],A,\alpha)$ is \textit{irreducible} if the random walk on $\G$ with kernel $\Psi$ defined at \eqref{eq:RWkernel_G} visits all nodes in $\G$ with positive probability. Note that since $\Psi(a,b)>0$ if and only if $\Psi(b,a)>0$, each proposed move $a\mapsto b$ is never rejected with probability 1. Hence $\G$ is irreducible if and only if the random walk on $\G$ with the modified kernel $P$ is irreducible. Moreover, we say $\G$ is \textit{bidirectional} if $A(i,j)>0$ if and only if $A(j,i)>0$ for all $i,j\in [n]$. Lastly, we associate a simple graph $G=([n],A_{G})$ with the network $\G$, where $A_{G}$ is its adjacency matrix given by $A_{G}(i,j) = \mathbf{1}(\min(A(i,j),A(j,i))>0)$. We call $G$ the \textit{skeleton} of $\G$. 
	
	\begin{thm}[Convergence of Glauber chain]\label{thm:stationary_measure_Glauber}
		Let $F=([k], A_{F})$ be a motif and $\G=([n],A,\alpha)$ be an irreducible network. Suppose $\mathtt{t}(F,\G)>0$ and let $(\x_{t})_{t\ge 0}$ be the Glauber chain $F\rightarrow \G$. 
		\begin{description}
			\item[(i)] $\pi_{F\rightarrow \G}$ is a stationary distribution for the Glauber chain. 
			\item[(ii)] Suppose $F$ is a rooted tree motif, $\G$ is bidirectional. \commHL{Then the Glauber chain is irreducible if and only if $\G$ is not bipartite. If $\G$ is not bipartite, then  $\pi_{F\rightarrow \G}$ is the unique stationary distribution for the Glauber chain. }
		\end{description}  
	\end{thm}
	
	The proof of Theorem \ref{thm:stationary_measure_Glauber} (i) uses a straightforward computation. For (ii), since $F$ is a rooted tree, one can argue that for the irreducibility of the Glauber chain for homomorphisms $F\rightarrow \G$, it suffices to check irreducibility of the Glauber chain $\x_{t}:K_{2}\rightarrow \G$, where $K_{2}$ is the 2-chain motif, which has at most two communicating classes depending on the `orientation' of $x_{t}$. Recall that $\G$ is not bipartite if and only if its skeleton $G$ contains an odd cycle. An odd cycle in $G$ can be used to construct a path between arbitrary two homomorphisms $K_{2}\rightarrow \G$. See Appendix \ref{section:proofs_mixing} for more details.
	
	We also have the corresponding convergence results for the pivot chain in Theorem \ref{thm:stationary_measure_pivot} below.
	
	\begin{thm}[Convergence of pivot chain]\label{thm:stationary_measure_pivot}
		Let $\G=([n],A,\alpha)$ be an irreducible network with $A(i,j)>0$ for some $j\in [n]$ for each $i\in [n]$. $F = ([k], A_{F})$ be a rooted tree motif. Then pivot chain $F\rightarrow \G$ is irreducible with unique stationary distribution $\pi_{F\rightarrow \G}$.
	\end{thm}

	Since both the Glauber and pivot chains evolve in the finite state space $[n]^{[k]}$, when given the irreducibility condition, both chains converge to their unique stationary distribution $\pi_{F\rightarrow \G}$. Then the Markov chain ergodic theorem implies the following corollary. 
	
	\begin{thm}[Computing stationary mean by ergodic mean]\label{thm:observable_time_avg}
		Let $F=([k],A_{F})$ be a rooted tree motif and $\G=([n],A,\alpha)$ be an irreducible network. Let $g:[n]^{[k]}\rightarrow \mathbb{R}^{d}$ be any function for $d\ge 1$. Let $\,\x:[k]\rightarrow [n]$ denote a random homomorphism $F\rightarrow \G$ drawn from $\pi_{F\rightarrow \G}$.    
		\begin{description}
			\item[(i)] If $(\x_{t})_{t\ge 0}$ denotes the pivot chain $F\rightarrow \G$, then 
			\begin{equation}\label{eq:conditional_density_LLN}
				\mathbb{E}[ g(\x) ] = \lim_{N\rightarrow \infty}\frac{1}{N}\sum_{t=1}^{N} g(\x_{t}).
			\end{equation}
			\item[(ii)]  If $\G$ is bidirectional and its skeleton is not bipartite, then \eqref{eq:conditional_density_LLN} also holds for the Glauber chain $(\x_{t})_{t\ge 0}:F\rightarrow \G$. 
		\end{description}  
	\end{thm}

	\vspace{0.3cm}


	
	Next, we address the question of how long we should run the Markov chain Monte Carlo in order to get a precise convergence to the target measure $\pi_{F\rightarrow \G}$. Recall that the \textit{total deviation distance} between two probability distributions $\mu,\nu$ on a finite set $\Omega$ is defined by 
	\begin{equation}\label{eq:def_TV_distance}
		\lVert\mu-\nu \rVert_{\text{TV}} := \frac{1}{2}\sum_{x\in \Omega} |\mu(x)-\nu(x)|.
	\end{equation}  
	If $(X_{t})_{t\ge 0}$ is any Markov chain on finite state space $\Omega$ with transition kernel $P$ and unique starionay distribution $\pi$, then its \textit{mixing time} $t_{mix}$ is defined to be the function 
	\begin{equation}
		t_{mix}(\eps) = \inf\left\{t\ge 0\,:\,   \max_{x\in \Omega}\lVert P^{t}(x, \cdot)-\pi \rVert_{\text{TV}}\le \eps   \right\}. 
	\end{equation}

	In Theorems \ref{thm:gen_coloring_mixing_tree} and \ref{thm:pivot_chain_mixing} below, we give bounds on the mixing times of the Glauber and pivot chains when the underlying motif $F$ is a tree. For the Glauber chain, let $\x:F\rightarrow \G$ be a homomorphism and fix a node $j\in [k]$. Define a probability distribution $\mu_{\x,j}$ on $[n]$ by 
	\begin{align}
		\mu_{\x,i}(b)=\frac{\left( \prod_{j\ne i} A(\x(j),b)^{A_{F}(j,i)} A(b,\x(j))^{A_{F}(i,j)}    \right)  A(b,b)^{A_{F}(i,i)} \alpha(b) }{\sum_{1\le c \le n} \left( \prod_{j\ne i} A(\x(j),c)^{A_{F}(j,i)} A(c,\x(j))^{A_{F}(i,j)}    \right)  A(c,c)^{A_{F}(i,i)} \alpha(c)  },
	\end{align}
	This is the conditional distribution that the Glauber chain uses to update $\x(j)$. 
	
	For each integer $d\ge 1$ and network $\G=([n],A,\alpha)$, define the following quantity 
	\begin{equation}\label{eq:def_glauber_mixing_constant}
		c(d,\G) := \max_{\substack{\x, \x':S_{d}\rightarrow \G \\ \text{$\x\sim\x'$ and $\x(1)= \x'(1)$} }} \left( 1 - 2d\lVert \mu_{\x,1} - \mu_{\x',1} \rVert_{\text{TV}}\right),  
	\end{equation}
	where $S_{d}=([d+1],E)$ is the star with $d$ leaves where node $1$ is at the center, and $\x\sim \x'$ means that they differ by at most one coordinate. For a motif $F=([k],A_{F})$, we also recall its \textit{maximum degree} $\Delta(F)$ defined in \eqref{eq:def_max_deg_edge_weighted_graph}.

	\begin{thm}[Mixing time of Glauber chain]\label{thm:gen_coloring_mixing_tree}
		Suppose $F=([k],A_{F})$ is a rooted tree motif and $\G$ is an irreducible and bidirectional network. Further, assume that the skeleton $G$ of $\G$ contains an odd cycle. If $c(\Delta(F),\G)>0$, then the mixing time $t_{mix}(\eps)$ of the Glauber chain $(\x_{t})_{t\ge 0}$ of homomorphisms $F\rightarrow \G$ satisfies 
		\begin{equation}
			t_{mix}(\eps) \le \lceil 2c(\Delta(F),\G) k\log (2k/\eps)(\diam(G)+1) \rceil.
		\end{equation}
	\end{thm}

	On the other hand, we show that the pivot chain mixes at the same time that the single-site random walk on network $\G$ does. An important implication of this fact is that the mixing time of the pivot chain does not depend on the size of the motif. However, the computational cost of performing each step of the pivot chain does increase in the size of the motif (see Remark \ref{rmk:pivot_chain_computational_cost}). 

	It is well-known that the mixing time of a random walk on $\G$ can be bounded by the absolute spectral gap of the transition kernel in \eqref{eq:MH_RW_kernel_G} (see \cite[Thm. 12.3, 12.4]{levin2017markov}). Moreover, a standard coupling argument shows that the mixing time is bounded above by the meeting time of two independent copies of the random walk. Using a well-known cubic bound on the meeting times due to \cite{coppersmith1993collisions}, we obtain the following result.
	
	\begin{thm}[Mixing time of pivot chain]\label{thm:pivot_chain_mixing}
		Let $F=([k],E_{F})$ be a directed rooted tree and $\G=([n],A,\alpha)$ be an irreducible network. Further assume that for each $i\in [n]$, $A(i,j)>0$ for some $j\in [n]$.   Let $P$ denote the transition kernel of the random walk on $\G$ defined at \eqref{eq:MH_RW_kernel_G}. Then the mixing time $t_{mix}(\eps)$ of the pivot chain $(\x_{t})_{t\ge 0}$ of homomorphisms $F\rightarrow \G$ satisfies the following. 
		\begin{description}
			\item[(i)] Let $t^{(1)}_{mix}(\eps)$ be the mixing time of the pivot with kernel $P$. Then 
			\begin{align}
				t_{mix}(\eps) = t^{(1)}_{mix}(\eps).
			\end{align}
			
			\item[(ii)] Let $\lambda_{\star}$ be the eigenvalue of $P$ with largest modulus that is less than $1$. Then 
			\begin{equation}
				\frac{\lambda_{\star} \log (1/2\eps)}{1-\lambda_{\star}} \le t_{mix}(\eps) \le \max_{x\in [n]}\frac{\log( 1/\alpha(x)  \eps)}{1-\lambda_{\star}}.
			\end{equation} 
			
			\item[(iii)] Suppose $n\ge 13$, $A$ is the adjacency matrix of some simple graph, and $\alpha(i)\propto \deg(i)$ for each $i\in [n]$. Then 
			\begin{align}
				t_{mix}(\eps) \le \log_{2}(\eps^{-1})\left( \frac{4}{27}n^{3}+\frac{4}{3}n^{2} + \frac{2}{9}n - \frac{296}{27}\right).
			\end{align}
		\end{description}
	\end{thm}
	
	\vspace{0.2cm}
	\subsection{Concentration and statistical inference}
	
	Suppose $(\x_{t})_{t\ge 0}$ is the pivot chain of homomorphisms $F\rightarrow \G$, and let $g:[k]^{[n]}\rightarrow \mathbb{R}^{d}$ be a function for some $d\ge 1$. In the previous subsection, we observed that various observables on the network $\G$ can be realized as the expected value $\mathbb{E}[g(\x)]$ under the stationary distribution $\pi_{F\rightarrow \G}$, so according to Corollary \ref{cor:time_avg_observables}, we can approximate them by time averages of increments $g(\x_{t})$ for a suitable choice of $g$. A natural question to follow is that if we take the time average for the first $N$ steps, is it possible to \commHL{infer the stationary expectation} $\mathbb{E}[g(\x)]$? 
	
	The above question can be addressed by applying  McDiarmid's inequality for Markov chains (see, e.g., \cite[Cor. 2.11]{paulin2015concentration}) together with the upper bound on the mixing time of pivot chain provided in Theorems \ref{thm:pivot_chain_mixing}.
	
	\begin{thm}[Concentration bound for real-valued observables]\label{thm:McDiarmids}
		Let $F=([k],E_{F})$, $\G=([n],A,\alpha)$, $(\x_{t})_{t\ge 0}$, and $t^{(1)}_{mix}(\eps)$ be as in Theorem \ref{thm:pivot_chain_mixing}. Let $g:[k]^{[n]}\rightarrow \mathbb{R}$ be any functional. Then for any $\delta>0$, 
		\begin{align}\label{eq:thm:McDiarmids_1}
			\mathbb{P}\left( \left| \mathbb{E}_{\pi_{F\rightarrow \G}}[g(\x)] - \frac{1}{N}\sum_{t=1}^{N} g(\x_{t})  \right| \ge \delta \right) <  2\exp\left( \frac{-2\delta^{2}N}{9 t_{mix}^{(1)}(1/4)} \right).
		\end{align}
	\end{thm}

	A similar result for the Glauber chain (with $t_{mix}^{(1)}(1/4)$ at \eqref{eq:thm:McDiarmids_1} replaced by $t_{mix}(1/4)$) can be derived from the mixing bounds provided in Theorem \ref{thm:gen_coloring_mixing_tree}.
	\begin{rmkk}
		One can reduce the requirement for running time $N$ in Theorem \ref{thm:McDiarmids} by a constant factor in two different ways. First, if the random walk of pivot on $\G$ exhibits a cutoff, then the factor of $9$ in (\ref{eq:thm:McDiarmids_1}) can be replaced by 4 (see \cite[Rmk. 2.12]{paulin2015concentration}). Second, if we take the partial sum of $g(\x_{t})$ after a `burn-in period' a multiple of mixing time of the pivot chain, then thereafter we only need to run the chain for a multiple of the relaxation time $1/(1-\lambda_{\star})$ of the random walk of pivot (see \cite[Thm. 12.19]{levin2017markov}).
	\end{rmkk}
	
	Next, we give a concentration inequality for the vector-valued partial sums process. This will allow us to construct confidence intervals for CHD profiles and motif transforms. The key ingredients are the use of burn-in period as in \cite[Thm. 12.19]{levin2017markov} and a concentration inequality for vector-valued martingales \citep{hayes2005large}. 
	
	\begin{thm}[Concentration bound for vector-valued observables]\label{thm:vector_concentration}
		Suppose $F=([k],A_{F})$, $\G=([n],A,\alpha)$, $(\x_{t})_{t\ge 0}$, and $t^{(1)}_{mix}(\eps)$ be as in Theorem \ref{thm:pivot_chain_mixing}. Let $\mathcal{H}$ be any Hilbert space and let $g:[n]^{[k]}\rightarrow \mathcal{H}$ be any function such that $\lVert g \rVert_{\infty}\le 1$. Then for any $\eps,\delta>0$, 
		\begin{align}\label{eq:thm:vector1}
			\mathbb{P}\left( \left\Vert \mathbb{E}_{\pi_{F\rightarrow \G}}[g(\x)] - \frac{1}{N}\sum_{t=1}^{N} g(\x_{r+t})  \right\Vert \ge \delta \right) \le 2 \exp\left( 2- \frac{-\delta^{2}N}{2} \right) + \eps,
		\end{align}
		provided $r\ge t_{mix}^{(1)}(\eps)$.
	\end{thm}

	\section{Network observables based on motif sampling}
	
	\commHL{In Section \ref{section:motif_sampling_MCMC}, we  introduced the motif sampling problem and proposed MCMC algorithms for the efficient computational solution of this problem. In that section we also established various theoretical guarantees. Specifically, we have shown that the stationary expectation of an arbitrary vector-valued function of a random homomorphism can be computed through an ergodic average along MCMC trajectories (see Theorems \ref{thm:McDiarmids} and \ref{thm:vector_concentration}). 
		In this section, we introduce specific network observables that can be efficiently computed in this way and also establish their stability properties}.
	
	\subsection{Definitions and computation}
	\label{section:observables_def}
	In this section, we introduce \commHL{four} network observables based on the random embedding of motif $F$ into a network $\G$. The first one is a conditional version of the well-known homomorphism density \cite{lovasz2012large}.
	
	\begin{dfn}[Conditional homomorphism density]
		Let $\G=([n],A,\alpha)$ be a network and fix two motifs $H=([k],A_{H})$ and $F=([k],A_{F})$. Let $H+F$ denote the motif $([k], A_{H}+A_{F})$. We define the \textit{conditional homomorphism density} (CHD) of $H$ in $\G$ given $F$ by 
		\begin{equation}\label{eq:def_cond.hom.dens.}
			\mathtt{t}(H,\G|F) = \frac{\mathtt{t}(H+ F,\G)}{\mathtt{t}(F,\G)},
		\end{equation} 
		which is set to zero when the denominator is zero.
	\end{dfn}	
	
	When $\G$ is a simple graph with uniform node weight, the above quantity equals the probability that all edges in $H$ are preserved by a uniform random homomorphism $\x:F\rightarrow \G$. \commHL{As a notable special case, we 
		describe a quantity closely related to the the average clustering coefficient as a conditional homomorphism density.}

	\begin{ex}[Average clustering coefficient]\label{ex:avg_clustering_coeff}
		\textup{A notable special case is when $F$ is the wedge motif $W_{3}=([3],\mathbf{1}_{\{(1,2),(1,3)\}})$ (see Figure \ref{fig:motif} (e)) and $H = ([3], \mathbf{1}_{\{(2,3)\}})$  and $\G$ is a simple graph. Then $\mathtt{t}(H,\G\,|\, W_{3})$ is the conditional probability that a random sample of three nodes $x_{1},x_{2},x_{3}$ in $\G$ induces a copy of the triangle motif $K_{3}$, given that there are edges from $x_{1}$ to each of $x_{2}$ and $x_{3}$ in $\G$. If all three nodes are required to be distinct, such a conditional probability is known as the \textit{transitivity ratio} \citep{luce1949method}. }
		
		\textup{A similar quantity with different averaging leads to the \textit{average clustering coefficient}, which was introduced to measure how a given network locally resembles a complete graph and used to define small-world networks by \citet{watts1998collective}. Namely, we may write 
			\begin{align}\label{eq:avg_clustering_coff}
				\mathtt{t}(H,\G\,|\, W_{3}) = \sum_{x_{1}\in [n]}  \frac{\sum_{x_{2},x_{3}\in [n]}A(x_{1},x_{2})A(x_{2},x_{3})A(x_{1},x_{3}) \alpha(x_{2})\alpha(x_{3}) }{ \left( \sum_{x_{2}\in [n]}A(x_{1},x_{2}) \alpha(x_{2}) \right)^{2}} \frac{\alpha(x_{1})}{\sum_{x_{1}\in [n]}\alpha(x_{1})}.
			\end{align} 
			If $\G$ is a simple graph with uniform node weight $\alpha\equiv 1/n$, then we can rewrite the above equation as 
			\begin{align}\label{eq:cond.hom.pivot2}
				\mathtt{t}(H,\G\,|\, W_{3}) = \sum_{x_{1}\in [n]} \frac{ \#(\text{edges between neighbors of $x_{1}$ in $\G$}) }{\deg_{\G}(x_{1})(\deg_{\G}(x_{1})-1)/2} \frac{\deg_{\G}(x_{1})-1}{n\deg_{\G}(x_{1})}.
			\end{align} 
			If the second ratio in the above summation is replaced
			by $1/n$, then it becomes the average clustering
			coefficient of $\G$ \citep{watts1998collective}. Hence
			the conditional homomorphism density
			$\mathtt{t}(H,\G\,|\, W_{3})$ can be regarded as a
			a variant of the generalized average clustering coefficient, which lower bounds the average clustering coefficient of $\G$ when it is a simple graph. We also remark that a direct generalization of the average clustering coefficient in terms of higher-order cliques was introduced recently by \cite{yin2018higher}. See also \cite{cozzo2015structure} for a related discussion for multiplex networks. }$\hfill \blacktriangle$
	\end{ex}
	
	Motivated by the connection between the conditional homomorphism density and the average clustering coefficient discussed above, we introduce the following generalization of the average clustering coefficient. (See Figures \ref{fig:intro_sim_MACC} and \ref{fig:FB_MACC} for examples.)
	
	\begin{dfn}[Matrix of Average Clustering Coefficients]
		\label{def:MACC}
		Let $\G=([n],A,\alpha)$ be a network and fix a motif $F=([k],A_{F})$. For each $1\le i\le j \le k$, let $H_{ij}=([k], A_{F}+\mathbf{1}_{\{(i,j)\}}\mathbf{1}(A_{F}(i,j)=0))$ be the motif obtained by `adding' the edge $(i,j)$ to $F$. We define the \textit{Matrix of Average Clustering Coefficient} (MACC) of $\G$ given $F$ by the $k\times k$ matrix whose $(i,j)$ coordinate is given by 
		\begin{equation}\label{eq:def_cond.hom.dens.}
			\mathtt{MACC}(\G|F)(i,j) = \frac{\mathtt{t}( H_{ij},\G)}{\mathtt{t}(F,\G)},
		\end{equation} 
		which is set to zero when the denominator is zero.
	\end{dfn}

	Next, instead of looking at the conditional homomorphism density of $H$ in $\G$ given $F$ at a single scale, we could look at how the conditional density varies at different scales as we threshold $\G$ according to a parameter $t\ge 0$. Namely, we draw a random homomorphism $\x:F\rightarrow \G$, and ask if all the edges in $H$ have weights $\ge t$ in $\G$. This naturally leads to the following function-valued observable.
	
	\begin{dfn}[CHD profile]
		Let $\G=([n],A,\alpha)$ be a network and fix two motifs $H=([k],A_{H})$ and $F=([k],A_{F})$. We define the \textit{CHD (Conditional Homomorphism Density) profile} of a network $\G$ for $H$ given $F$ by the function $\mathtt{f}(H,\G\,|\, F):[0,1]\rightarrow [0,1]$, 
		\begin{align}
			\mathtt{f}(H,\G\,|\, F)(t)&=\P_{F\rightarrow \G}\left( \min_{1\le i,j\le k} A(\x(i),\x(j))^{A_{H}(i,j)}\ge t  \right), \label{eq:def_CHDF}
		\end{align}
		where $\x:F\rightarrow \G$ is a random embedding drawn from the distribution $\pi_{F\rightarrow \G}$ defined at \eqref{eq:def_embedding_F_N}. 
	\end{dfn}
	

	\commHL{We give examples of CHD profiles involving two-armed paths, singleton, and self-loop motifs. }
	
	\begin{ex}[CHD profiles involving two-armed paths]\label{ex:motifs_Ex}
		\normalfont
		
		For integers $k_{1},k_{2}\ge 0$, we define a \textit{two-armed path motif}  $F_{k_{1},k_{2}}=(\{0,1,\cdots, k_{1}+k_{2}\}, \mathbf{1}(E))$ where its set $E$ of directed edges are given by  
		\begin{align}
			E = \left\{ \begin{matrix} (0,1),(1,2),\cdots, (k_{1}-1,k_{1}),   \\ 
				(0,k_{1}+1), (k_{1}+1, k_{1}+2), \cdots, (k_{1}+k_{2}-1, k_{1}+k_{2}) \end{matrix} \right\}.
		\end{align}
		This is also the rooted tree consisting of two directed paths of lengths $k_{1}$ and $k_{2}$ from the root 0. Also, we denote $H_{k_{1},k_{2}}=(\{0,1,\cdots, k_{1}+k_{2}\}, \mathbf{1}_{\{ (k_{1}, k_{1}+k_{2})\}})$. This is the motif on the same node set as $F_{k_{1},k_{2}}$ with a single directed edge between the ends of the two arms. (See Figure \ref{fig:cycle_motif}.)

		\begin{figure*}[h]
			\centering
			\includegraphics[width=0.85\linewidth]{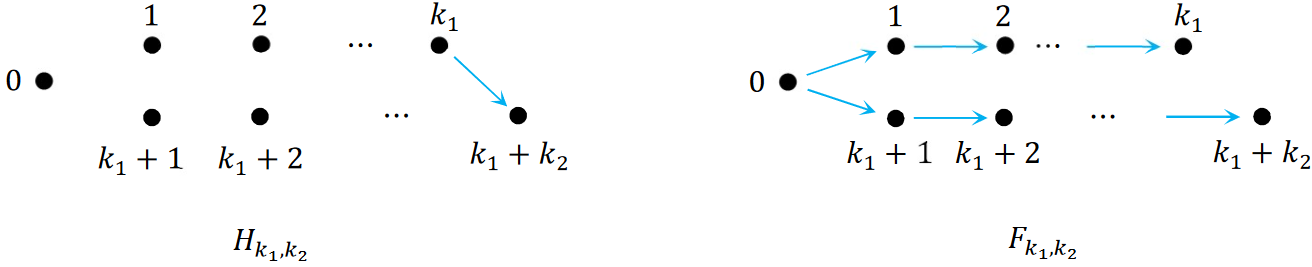}
			\caption{ Plots of the motif $H_{k_1,k_2}$, which contains a single directed edge from $k_1$ to $k_1+k_2$ (left), and the two-armed path motif $F_{k_1, k_2}$ on the right.
			}
			\label{fig:cycle_motif}
		\end{figure*}
		
		When $k_{1}=k_{2}=0$, then $F_{0,0}$ and $H_{0,0}$ become the `singleton motif' $([0],\mathbf{0})$ and the `self-loop motif' $([0], \mathbf{1}_{(0,0)})$, \commHL{respectively.} In this case the corresponding homomorphism and conditional homomorphism densities have simple expressions involving only the diagonal entries of the edge weight matrix of the network. Namely, for a given network $\G=([n],A,\alpha)$, we have 
		\begin{align}
			\mathtt{t}(H_{0,0}, \G) = \sum_{k=1}^{n} A(k,k) \alpha(k),\qquad \mathtt{t}(F_{0,0}, \G) = \sum_{k=1}^{n} \alpha(k) = 1.
		\end{align} 
		The former is also the weighted average of the diagonal entries of $A$ with respect to the node weight $\alpha$. For the conditional homomorphism densities, observe that 
		\begin{align}
			\mathtt{t}(H_{0,0},\G\,|\, F_{0,0}) = \sum_{k=1}^{n} A(k,k) \alpha(k),\qquad \mathtt{t}(H_{0,0},\G\,|\, H_{0,0}) = \frac{\sum_{k=1}^{n} A(k,k)^{2} \alpha(k) } {\sum_{k=1}^{n} A(k,k) \alpha(k)}.
		\end{align}
		The latter is also the ratio between the first two moments of the diagonal entries of $A$. The corresponding CHD profile is given by 
		\begin{align}
			\mathtt{f}(H_{0,0},\G\,|\, F_{0,0})(t) &= \sum_{k=1}^{n} \mathbf{1}(A(k,k)\ge t)  \alpha(k), \\
			\mathtt{f}(H_{0,0},\G\,|\, H_{0,0})(t) &= \frac{\sum_{k=1}^{n} \mathbf{1}(A(k,k)\ge t) A(k,k) \alpha(k)}{\sum_{k=1}^{n} A(k,k) \alpha(k)}.
		\end{align}
		\commHL{The above two} quantities can be interpreted as the probability that the self-loop intensity $A(k,k)$ is at least $t$, when $k\in [n]$ is chosen with probability proportional to $\alpha(k)$ or $A(k,k)\alpha(k)$, respectively.  $\hfill \blacktriangle$
	\end{ex}
	
	\vspace{0.1cm}
	Lastly, we define network-valued observables from motif sampling. Recall that motif sampling gives the $k$-dimensional probability measure $\pi_{F\rightarrow \G}$ on the set $[n]^{[k]}$. Projecting this measure onto the first and last coordinates gives a probability measure on $[n]^{\{1,k\}}$. This can be regarded as \commHL{the weight matrix} $A^{F}:[n]^{2}\rightarrow [0,1]$ of another network $\G^{F}:=([n],A^{F},\alpha)$. The precise definition is given below. 
	
	\begin{dfn}[Motif transform]
		Let $F=([k],A_{F})$ be a motif for some $k\ge 2$ and $\G=([n],A,\alpha)$ be a network. The \textit{motif transform} of $\G$ by $F$ is the network $\G^{F}:=([n],A^{F},\alpha)$, where  
		\begin{equation}
			A^{F}(x,y) = \P_{F\rightarrow \G}\left( \x(1)=x,\, \x(k)=y \right),
		\end{equation}    
		where $\x:F\rightarrow \G$ is a random embedding drawn from the distribution $\pi_{F\rightarrow \G}$ defined at \eqref{eq:def_embedding_F_N}.
	\end{dfn}

	Motif transforms can be used to modify a given network so that certain structural defects are remedied without perturbing the original network \commHL{aggressively}. For instance, suppose $\G$ consists two large cliques $C_{1}$ and $C_{2}$ connected by a thin path $P$. When we perform the single-linkage clustering on $\G$, it will perceive $C_{1}\cup   P \cup C_{2}$ as a single cluster, even though the linkage $P$ is not significant. To overcome such an issue, we could instead perform single-linkage clustering on the motif transform $\G^{F}$ where $F$ is a triangle. Then the thin linkage $P$ is suppressed by the transform, and the two cliques $C_{1}$ and $C_{2}$ will be detected as separate clusters. See Example \ref{ex:barbell} for more details. 
	
	\begin{rmkk}
		\normalfont
		Transformations of networks analogous to motif transforms have been studied in the context of clustering of metric spaces and networks by \cite{classif,representable-nets,excisive-nets} and \cite{clust-motivic}.
	\end{rmkk}

	Next, we discuss how to compute the network observables we introduced in this section. Given a motif $F$ and network $\G$, the CHD, CHD profile, and motif transform are all defined by the expectation of a suitable function of a random homomorphism $\x:F\rightarrow \G$. While computing this expectation directly is computationally challenging, we can efficiently compute them by taking time averages along MCMC trajectory $\x_{t}:F\rightarrow \G$ of a dynamic embedding (see Theorems \ref{thm:McDiarmids}, \ref{thm:vector_concentration}, and \ref{thm:McDiarmids}). This is more precisely stated in the following corollary.
	
	
	\begin{cor}\label{cor:time_avg_observables}
		Let $F=([k],A_{F})$ be a rooted tree motif, $H=([k],A_{H})$ another motif, and $\G=([n],A,\alpha)$ an irreducible network. Let $(\x_{t})_{t\ge 0}$ be the pivot chain $F\rightarrow \G$. Then the followings hold: 
		\begin{align}
			\mathtt{t}(H,\G|F) &= \lim_{N\rightarrow \infty} \frac{1}{N} \sum_{t=1}^{N}  \prod_{1\le i,j\le k} A(\x_{t}(i),\x_{t}(j))^{A_{H}(i,j)}, \label{eq:cond_hom_ergodic} \\
			\mathtt{f}(H,\G\,|\, F) (t) &= \lim_{N\rightarrow \infty} \frac{1}{N} \sum_{t=1}^{N} \prod_{1\le i,j\le k} \mathbf{1}\left(A(\x_{t}(i),\x_{t}(j))^{A_{H}(i,j)}\ge t\right) \quad t\in [0,1], \label{eq:profile_ergodic}\\
			\mathtt{t}(H,\G\,|, F) A^{H} &= \lim_{N\rightarrow \infty} \frac{1}{N} \sum_{t=1}^{N} \left( \prod_{1\le i,j\le k}  A(\x_{t}(i),\x_{t}(j))^{A_{H}(i,j)}\right) E_{\x_{t}(1),\x_{t}(k)}, \label{eq:transform_ergodic}
		\end{align}
		where $E_{i,j}$ denotes the $(n\times n)$ matrix with zero entries except 1 at $(i,j)$ entry. Furthermore, $\G$ is bidirectional and its skeleton contains an odd cycle, then the above equations also hold for the Glauber chain $(\x_{t})_{t\ge 0}:F\rightarrow \G$. 
	\end{cor}

	\begin{rmkk}
		\normalfont
		When we compute $A^{H}$ using \eqref{eq:transform_ergodic}, we do not need to approximate the conditional homomorphism density $\mathtt{t}(H,\G,|\, F)$ separately. Instead, we compute the limiting matrix on the right-hand side of (\ref{eq:transform_ergodic}) and normalize by its 1-norm so that $\lVert A^{H} \rVert_{1}=1$. 
	\end{rmkk}

	\begin{rmkk}
		\normalfont
		\commHL{We emphasize that all the network observables that we introduced in this section can be expressed as the expected value of some function of a random homomorphism $F\rightarrow \G$, and that any  network observable defined in 
			this manner
			can be computed efficiently by taking suitable time averages along MCMC trajectory of homomorphisms $\x_{t}:F\rightarrow \G$ as in Corollary \ref{cor:time_avg_observables}. It would be interesting to investigate other  network observables that \commHL{can be expressed as the expectation of some function of a random homomorphism $F\rightarrow \G$.} } 
	\end{rmkk}

	\vspace{0.3cm}
	
	\section{Stability inequalities}
	\label{section:stability_inequalities}
	
	\commHL{In this section, we establish stability properties of the network observables we introduced in Section \ref{section:observables_def}. Roughly speaking, our aim is to show that these observable change little when we change the underlying network little. In order to do so, we need to introduce a notion of distance between networks.  }
	
	We introduce two commonly used notions of distance between networks as viewed as `graphons'. A \textit{kernel} is a measurable integrable function $W:[0,1]^{2}\rightarrow [0,\infty)$. We say a kernel $W$ is a \textit{graphon} if it takes values from $[0,1]$. Note that we do not require the kernels and graphons to be symmetric, in contrast to the convention use \commHL{in} \cite{lovasz2012large}. For a given network $\G=([n],A,\alpha)$, we define a `block kernel' $U_{\G}:[0,1]^{2}\rightarrow [0,1]$ by 
	\begin{equation}\label{eq:def_block_kernel}
		U_{\G}(x,y) = \sum_{1\le i,j \le n } A(i,j) \mathbf{1}(x\in I_{i}, y\in I_{j}),
	\end{equation} 
	where $[0,1]=I_{1}\sqcup I_{2}\sqcup \cdots \sqcup I_{n}$ is a partition such that each $I_{i}$ is an interval with Lebesgue measure $\mu(I_{i})=\alpha(i)$. (For more discussion on kernels and graphons, see \citep{lovasz2012large}.)

	For any integrable function $W:[0,1]^{2}\rightarrow \mathbb{R}$, we define its $p$-norm by 
	\begin{equation}
		\lVert W \rVert_{p} = \left( \int_{0}^{1}\int_{0}^{1} |W(x,y)|^{p}\,dx\,dy \right)^{1/p},
	\end{equation}
	for any real $p\in (0,\infty)$, and its \textit{cut norm} by 
	\begin{equation}
		\lVert W \rVert_{\square} =  \sup_{A,B\subseteq [0,1]} \left| \int_{A}\int_{B} W(x,y) \,dx\,dy \right|,
	\end{equation}
	where the supremum is taken over Borel-measurable subsets of $A,B\subseteq [0,1]$.
	Now for any two networks $\G_{1}$ and $\G_{2}$, we define their \textit{$p$-distance} by 
	\begin{equation}\label{eq:ntwk_p_distance}
		\delta_{p}(\G_{1},\G_{2}) = \inf_{\varphi} \lVert U_{\G_{1}} - U_{\varphi(\G_{2})} \rVert_{p},
	\end{equation}
	where the infimum is taken over all bijections $\varphi:[n]\rightarrow [n]$ and $\varphi(\G_{2})$ is the network $([n],A^{\varphi},\alpha\circ\varphi)$, $A^{\varphi}(x,y) = A(\varphi(x),\varphi(y))$. Taking infimum over $\varphi$ ensures that the similarity between two networks does not depend on relabeling of nodes. We define \textit{cut distance} between $\G_{1}$ and $\G_{2}$ similarly and denote it  by $\delta_{\square}(\G_{1},\G_{2})$. We emphasize that the cut norm and cut distance are well-defined for possibly asymmetric kernels. 
	
	The cut distance is \commHL{more} conservative than the 1-norm in the sense that 
	\begin{equation}
		\delta_{\square}(W_{1},W_{2})\le \delta_{1}(W_{1},W_{2})
	\end{equation}
	for any two kernels $W_{1}$ and $W_{2}$. This follows from the fact that 
	\begin{equation}
		\lVert W \rVert_{\square} \le \lVert\, |W| \,\rVert_{\square}=   \lVert W \rVert_{1}. 
	\end{equation}
	for any kernel $W$.

	Now we state stability inequalities for the network observables we introduced in Section \ref{section:observables_def} in terms of kernels and graphons. The homomorphism density of a motif $F=([k],A_{F})$ in a kernel $U$ is defined by (see, e.g.,  \citet[Subsection 7.2]{lovasz2006limits}) 
	\begin{align}\label{eq:hom_density_graphon}
		\mathtt{t}(F,U) = \int_{[0,1]^{k}} \prod_{1\le i,j\le k} U(x_{i},x_{j})^{A_{F}(i,j)}\,dx_{1}\cdots dx_{k}.
	\end{align} 
	For any other motif $H=([k],A_{H})$, we define the conditional homomorphism density of $H$ in $U$ given $F$ by $\mathtt{t}(H,U\,|\, F)=\mathtt{t}(H+F,U)/\mathtt{t}(F,U)$, where $F+E = ([k], A_{E}+A_{F})$ and we set $\mathtt{t}(H,U\,|\, F)=0$ if $\mathtt{t}(F,U)=0$. It is easy to check that the two definitions of conditional homomorphism density for networks and graphons agree, namely  $\mathtt{t}(H,\G\,|\, F)=\mathtt{t}(H,U_{\G}\,|\, F)$. Also, CHD  for kernels is defined analogously to \eqref{eq:def_CHDF}. {That is, we define the \textit{CHD  profile} of a kernel $U:[0,1]^{2}\rightarrow [0,1]$ for $H$ given $F$ by the function $\mathtt{f}(H,U\,|\, F):[0,1]\rightarrow [0,1]$,
		\begin{align}
			\mathtt{f}(H,U\,|\, F)(t)&= \int_{[0,1]^{k}} \mathbf{1}\left( \min_{1\le i,j\le k} U(\x(i),\x(j))^{A_{H}(i,j)}\ge t   \right) \, dx_{1},\dots,dx_{k}.
		\end{align}
	}
	Finally, we define the motif transform $U^{F}:[0,1]^{2}\rightarrow [0,\infty)$ of a kernel $U$ by a motif $F=([k],A_{F})$ for $k\ge 2$ by 
	\begin{align}
		U^{F}(x_{1},x_{k}) = \frac{1}{\mathtt{t}(F,U)} \int_{[0,1]^{k-2}} \prod_{1\le i,j\le k} U(x_{i},x_{j})^{A_{F}(i,j)}\,dx_{2}\cdots dx_{k-1}.
	\end{align}

	The well-known stability inequality for homomorphism densities is due to  \cite{lovasz2006limits}, which reads 
	\begin{equation}\label{ineq:countinglemma1}
		| \mathtt{t}(F,U)-\mathtt{t}(F,W) | \le |E_{F}| \cdot \delta_{\square}(U,W)
	\end{equation} 
	for any two graphons $U,W:[0,1]^{2}\rightarrow [0,1]$ and a motif $F=([k],E_{F})$. A simple application of this inequality shows that conditional homomorphism densities are also stable with respect to the cut distance up to normalization.
	
	\begin{prop}\label{prop:conditioned_counting_lemma}
		Let $H=([k],A_{H})$ and $F=([k],A_{F})$ be motifs such that $H+F=([k], A_{H}+A_{F})$ is simple. Let $U,V:[0,1]^{2}\rightarrow [0,1]$ be graphons. Then 
		\begin{equation}\label{ineq:conditioned_counting_lemma}
			|\mathtt{t}(H,U|F) - \mathtt{t}(H,W|F)|\le  \frac{2|E_{H}| \cdot \delta_{\square}(U,W)}{\max( \mathtt{t}(F,U), \mathtt{t}(F,W))} .
		\end{equation}
	\end{prop}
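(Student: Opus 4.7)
The plan is to reduce the statement to two invocations of the classical counting lemma \eqref{ineq:countinglemma1}, combined with an elementary estimate for a difference of ratios. Abbreviate $a := \mathtt{t}(H+F,U)$, $b := \mathtt{t}(F,U)$, $c := \mathtt{t}(H+F,W)$, $d := \mathtt{t}(F,W)$, so that by definition $\mathtt{t}(H,U\,|\, F) = a/b$ and $\mathtt{t}(H,W\,|\, F) = c/d$. Since $U,W$ take values in $[0,1]$, appending edge factors to the integrand in \eqref{eq:hom_density_graphon} can only shrink its value, so $a \le b$ and $c \le d$. The cases $b=0$ or $d=0$ are handled by the standing convention $\mathtt{t}(H,\cdot\,|\, F) = 0$ when $\mathtt{t}(F,\cdot) = 0$, which makes the inequality trivial; hence I may assume $\max(b,d) > 0$.

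The main structural point is that the hypothesis ``$H+F$ simple'' automatically forces $F$ to be simple, because the adjacency matrix of $F$ is entry-wise bounded above by that of $H+F$ (setting some $A_{H}$-entries to zero can only destroy, not create, loops or parallel edges). Consequently \eqref{ineq:countinglemma1} is available for both motifs simultaneously, yielding
\begin{equation*}
|a-c| \le |E_{H+F}|\,\delta_{\square}(U,W), \qquad |b-d| \le |E_F|\,\delta_{\square}(U,W).
\end{equation*}

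To convert these into a bound on $|a/b - c/d|$, I would use the identity
\begin{equation*}
\frac{a}{b} - \frac{c}{d} \;=\; \frac{a-c}{b} + \frac{c}{d}\cdot\frac{d-b}{b}
\end{equation*}
together with $c/d \le 1$ (from $c \le d$), which gives $|a/b - c/d| \le (|a-c| + |b-d|)/b$. Swapping the roles of the two graphons in the decomposition, i.e.\ using $\frac{a}{b} - \frac{c}{d} = \frac{a-c}{d} + \frac{a}{b}\cdot\frac{d-b}{d}$ together with $a/b \le 1$, produces the analogous bound with $d$ in the denominator. Taking the better of the two estimates replaces the denominator by $\max(b,d) = \max(\mathtt{t}(F,U), \mathtt{t}(F,W))$, as required. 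Substituting the two counting-lemma bounds then finishes the argument, after routine bookkeeping of the edge-count constants in the numerator.

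I do not expect any serious obstacle. The proof is essentially an exercise in combining the counting lemma with a two-sided algebraic decomposition. The two points that deserve attention are (i) noticing that $F$ inherits simpleness from $H+F$, so the counting lemma is available for the denominator as well as the numerator, and (ii) running the ratio-difference identity on both sides in order to divide by $\max(b,d)$ rather than $b$ or $d$ alone; the rest is arithmetic.
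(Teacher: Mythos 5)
Your proposal is correct and follows essentially the same route as the paper: both proofs reduce the claim to two applications of the counting lemma \eqref{ineq:countinglemma1} via the standard ratio-difference decomposition, using $\mathtt{t}(H+F,\cdot)\le \mathtt{t}(F,\cdot)$ to control the cross term (the paper's version is the single identity $\frac{a}{b}-\frac{c}{d}=\frac{a(d-b)+b(a-c)}{bd}$, and it leaves the symmetrization needed to obtain $\max(b,d)$ implicit, whereas you spell it out). One caveat on the "routine bookkeeping": what the argument actually delivers in the numerator is $\bigl(|E_{H+F}|+|E_{F}|\bigr)\delta_{\square}(U,W)=\bigl(|E_{H}|+2|E_{F}|\bigr)\delta_{\square}(U,W)$, which matches the stated constant $2|E_{H}|$ only if one reads $E_{H}$ as $E_{H+F}$ (so that $|E_{F}|\le|E_{H}|$); this slippage is present in the paper's own proof as well (its line "$|E_{H}|\le|E_{H}|$" is evidently meant to be $|E_{F}|\le|E_{H+F}|$ with $H$ standing for the combined motif), so it is a defect of the statement's notation rather than of your argument.
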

	As a corollary, this also yields a similar stability inequality for the MACC (see Definition \ref{def:MACC}). A similar argument shows that motif transforms are also stable with respect to cut distance.  
	\begin{prop}\label{prop:stability_Ftransform}
		Let $F=([k],A_{F})$ be a simple motif and let $U,W:[0,1]^{2}\rightarrow [0,1]$ be  graphons. Then 
		\begin{equation}
			\delta( U^{F}, W^{F})_{\square} \le  \left( 1+ \frac{1}{\max(\mathtt{t}(F,U), \mathtt{t}(F,W))}   \right) |E(F)|\cdot \delta_{\square}( U,W )
		\end{equation}
	\end{prop}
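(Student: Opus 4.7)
I will mimic the strategy of Proposition~\ref{prop:conditioned_counting_lemma}, splitting $U^F - W^F$ into a ``numerator'' error and a ``normalization'' error and bounding each by a counting-lemma-type argument. First, motif transforms commute with measure-preserving relabelings: a direct change of variables (using $\mathtt{t}(F, W^\phi) = \mathtt{t}(F, W)$) shows that $(W^\phi)^F = (W^F)^\phi$ for every measure-preserving $\phi:[0,1]\to[0,1]$. Consequently $\delta_\square(U^F, W^F) \le \lVert U^F - (W^\phi)^F \rVert_\square$ for every $\phi$, and it suffices to bound $\lVert U^F - W^F \rVert_\square$ in terms of $\lVert U - W \rVert_\square$ for arbitrary graphons $U, W$; the conclusion then follows on taking the infimum over $\phi$.

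Let $P_U(x_1, x_k) := \int_{[0,1]^{k-2}} \prod_{1\le i,j \le k} U(x_i, x_j)^{A_F(i,j)}\,dx_2\cdots dx_{k-1}$, so that $U^F = P_U/\mathtt{t}(F, U)$. The algebraic identity
\begin{equation*}
U^F - W^F = \frac{P_U - P_W}{\mathtt{t}(F, U)} + \frac{\mathtt{t}(F, W) - \mathtt{t}(F, U)}{\mathtt{t}(F, U)\,\mathtt{t}(F, W)}\,P_W
\end{equation*}
reduces the task to two estimates: (i) a partial counting lemma $\lVert P_U - P_W \rVert_\square \le |E_F|\,\lVert U - W \rVert_\square$, and (ii) the classical counting lemma \eqref{ineq:countinglemma1} giving $|\mathtt{t}(F, U) - \mathtt{t}(F, W)| \le |E_F|\,\lVert U - W \rVert_\square$, together with the identity $\lVert P_W \rVert_\square = \int\int P_W = \mathtt{t}(F, W)$ (which holds since $P_W \ge 0$). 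Combining these bounds and symmetrizing in $U \leftrightarrow W$ yields a bound of the stated form with $\max(\mathtt{t}(F, U), \mathtt{t}(F, W))$ in the denominator.

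The technical core is the partial counting lemma. I would telescope $\prod_{i,j} U^{A_F(i,j)} - \prod_{i,j} W^{A_F(i,j)}$ over the edges of $F$ into $|E_F|$ hybrid terms of the form $\prod_{e'<e} W_{e'} \cdot (U - W)_e \cdot \prod_{e'>e} U_{e'}$. For each such term with distinguished edge $e = (a, b)$, its contribution to $\int_A\int_B (P_U - P_W)$ is an integral over $[0,1]^k$ weighted by $\mathbf{1}_A(x_1)\mathbf{1}_B(x_k)$. Applying Fubini to integrate all coordinates except $x_a$ and $x_b$, the critical observation---and the reason for the \emph{simple} hypothesis on $F$---is that no edge $e' \ne e$ has both endpoints in $\{a, b\}$, since $F$ has at most one directed edge between any pair of nodes. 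Hence $\prod_{e' \ne e}(U\text{ or }W)_{e'}$, viewed as a function of $(x_a, x_b)$, factors as $f(x_a)\,g(x_b)$ with $f, g \in [0,1]$; the indicators $\mathbf{1}_A(x_1)$ and $\mathbf{1}_B(x_k)$ are either absorbed into $f$ or $g$ (when $\{a, b\}$ meets $\{1, k\}$) or pass outside as bounded scalars. A standard layer-cake argument then yields $\bigl| \int\int (U-W)(x,y) f(x) g(y)\,dx\,dy \bigr| \le \lVert U - W \rVert_\square$, and summing over the $|E_F|$ telescope terms (the remaining integrals over $[0,1]^{k-2}$ cost at most a factor of one) completes the partial counting lemma.

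The main obstacle I anticipate lies precisely in this separability step, and in the bookkeeping when the distinguished edge $e$ shares a vertex with $\{1, k\}$ or coincides with $(1, k)$ itself: the indicators must be bundled correctly with the factors $f$ and $g$, and one must verify that the resulting weighted factors remain in $[0,1]$ in every case. The simplicity hypothesis on $F$ is exactly what makes this case analysis go through, by ruling out any single pair $\{a,b\}$ being joined by more than one edge that would obstruct the separable factorization.
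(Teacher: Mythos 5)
Your proposal is correct and follows essentially the same route as the paper's proof: the same split of $U^F-W^F$ into a numerator error and a normalization error, the same ``partial counting lemma'' $\lVert P_U-P_W\rVert_\square\le |E_F|\,\lVert U-W\rVert_\square$ proved by telescoping over edges and using simplicity of $F$ to get the separable factorization $f(x_a)g(x_b)$ (with the other coordinates held fixed before the inner integral over $x_a,x_b$), and the same use of $\lVert P_W\rVert_\square=\mathtt{t}(F,W)$ together with the counting lemma. As with the paper's own argument, this derivation actually yields the constant $2|E_F|/\max(\mathtt{t}(F,U),\mathtt{t}(F,W))$ rather than the slightly smaller $\bigl(1+1/\max(\mathtt{t}(F,U),\mathtt{t}(F,W))\bigr)|E_F|$ appearing in the statement, a discrepancy already present in the paper and not a defect of your approach.
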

	
	\commHL{Lastly, we state a stability inequality for the CHD profiles in Theorem \ref{thm:counting_filtration}. While the proof of Propositions \ref{prop:conditioned_counting_lemma} and \ref{prop:stability_Ftransform} is relatively straightforward using the stability inequality for the homomorphism density \eqref{ineq:countinglemma1}, the proof of Theorem \ref{thm:counting_filtration} is more involved and requires new analytical tools. The main idea is to define a notion of cut distance between `filtrations of graphons' and to show that this new distance interpolates between the cut distance and the 1-norm-distance (see Proposition \ref{prop:filtration_cutnormbound}). See Appendix \ref{section:proofs_stability} for more details.}
	
	\begin{thm}\label{thm:counting_filtration}
		Let $H=([k],A_{H})$ and $F=([k],A_{F})$ be simple motifs such that $H+F=([k], A_{H}+A_{F})$ is simple. Then for any graphons $U,W:[0,1]^{2}\rightarrow [0,1]$, 
		\begin{equation}
			\lVert \mathtt{f}(H,U\,|\, F) - \mathtt{f}(H,W\,|\, F) \rVert_{1} \le \frac{2 \lVert A_{F} \rVert_{1} \cdot \delta_{\square}(U, W) + \lVert A_{H} \rVert_{1} \cdot \delta_{1}(U,W)}{\max(\mathtt{t}(F,U), \mathtt{t}(F,W))}.
		\end{equation}
	\end{thm}

	\section{Examples}
	\label{section:examples}
	
	In this section, we provide various computational examples to demonstrate our techniques and results. Throughout this section  we use the motifs $H_{k_{1},k_{2}}$ and $F_{k_{1},k_{2}}$ introduced in Example \ref{ex:motifs_Ex}. In Subsection \ref{subsection:chd_ex}, we compute explicitly and numerically various homomorphism densities for the network given by a torus graph plus some random edges. In Subsection \ref{subsection:chd_profile_SBM}, we compute various CHD profiles for stochastic block networks. Lastly, in Subsection \ref{subsection:motif_transform_ex}, we discuss motif transforms in the context of hierarchical clustering of networks and illustrate this using a barbell network.

	\subsection{Conditional homomorphism densities}\label{subsection:chd_ex}

	\begin{ex}[Torus]\label{ex:torus}
		\textup{Let $\G_{n} = ([n]\times [n],A,\alpha)$ be the $(n\times n)$ torus $\mathbb{Z}_{n}\times \mathbb{Z}_{n}$ with nearest neighbor edges and uniform node weight $\alpha\equiv 1/n^{2}$.  
			Consider the conditional homomorphism density $\mathtt{t}(H_{k,0},\G_{n}\,|\, F_{k,0})$. Since $A$ binary and symmetric, note that $\P_{F_{k,0}\rightarrow \G_{n}}$ is the uniform probability distribution on the sample paths of simple symmetric random walk on $\G_{n}$ for the first $k$ steps. Hence if we denote this random walk by $(X_{t})_{t\ge 0}$, then 
			\begin{align}
				\mathtt{t}(H_{k,0},\G_{n}\,|\, F_{k,0}) &= \P(\lVert X_{k}-(0,0) \rVert_{\infty}=1 \,|\, X_{0}=(0,0)) \\ 			
				&= 4\P(X_{k+1}=(0,0)\,|\, X_{0}=(0,0))\\
				&= \frac{1}{4^{k}}\sum_{\substack{a,b\ge 0\\ 2(a+b)=k+1}} \frac{(k+1)!}{a!a!b!b!}.
			\end{align}
			For instance, we have $\mathtt{t}(H_{3,0},\G_{n}\,|\, F_{3,0})=9/16=0.5625$ and 
			\begin{align}
				\mathtt{t}(H_{9,0},\G_{n}\,|\, F_{9,0})= \frac{2\cdot 10!}{4^{9}}\left(\frac{1}{5! 5!} + \frac{1}{4! 4!} + \frac{1}{3!3!2!2!} \right) = \frac{3969}{16384} \approx 0.2422.
			\end{align}
			See Figure \ref{fig:torus_CHD} for a simulation of Glauber and Pivot chains $F_{k,0}\rightarrow \G_{n}$. As asserted in Corollary \ref{cor:time_avg_observables}, time averages of these dynamic embeddings converge to the correct values of the conditional homomorphism density $\mathtt{t}(H_{k,0},\G_{n}\,|\, F_{k,0})$. The simulation indicates that for sparse networks like the torus, the Glauber chain takes longer to converge than Pivot chain does.  }$\hfill \blacktriangle$	
	\end{ex}

	\begin{ex}[Torus with long-range edges]\label{ex:torus_longedge}
		\textup{Fix parameters $p\in [0,1]$ and $\alpha\in [0,\infty)$. Let $\G_{n}=\G_{n}^{p,\alpha}$ be the $n\times n$ torus $\mathbb{Z}_{n}\times \mathbb{Z}_{n}$ with additional edges added randomly to each non-adjacent pair $(a,b)$ and $(c,d)$, independently with probability $p(|a-c|+|b-d|)^{-\alpha}$. When $\alpha=0$, this reduces to the standard Watts-Strogatz model \cite{watts1998collective}. }
		
		\textup{See Figure \ref{fig:torus_CHD} for some simulation of Glauber and Pivot chains $F_{k,0}\rightarrow \G_{50}$ for $p=0.1$ and $\alpha=0$. Time averages of these dynamic embeddings converge to the correct values of the conditional homomorphism density $\mathtt{t}(H_{k,0},\G_{n}\,|\, F_{k,0})$, which is approximately the ambient edge density $0.1$. This is because if we sample a copy of $F_{k,0}$, it is likely to use some ambient `shortcut' edges so that the two ends of $F_{k,0}$ are far apart in the usual shortest path metric on the torus. Hence the chance that these two endpoints are adjacent in the network $\G_{n}^{p,0}$ is roughly $p$.  }
		
		\textup{In the next example, we use the tree motif $F$ on six nodes and $H$ is obtained from $F$ by adding two extra edges, as described in Figure \ref{fig:torus_long_edges_motif_CHD}. A similar reasoning to the one used above tells us that the probability that a random copy of $F$ from $\G_{n}^{p,0}$ has edges $(2,5)$ and $(3,6)$ should be about $p^{2}$. Indeed, both the Glauber and Pivot chains in Figure \ref{fig:torus_long_edges_motif_CHD} converge to $0.01$.  }
		$\hfill \blacktriangle$
	\end{ex}

	\subsection{CHD profiles of stochastic block networks}
	\label{subsection:chd_profile_SBM}

	Let $\G=([n],A,\alpha)$ be a network. For each integer $r\ge 1$ and a real number $\sigma>0$, we will define a `stochastic block network'  $\mathfrak{X}=([nr],B^{(r)}(A,\sigma^{2}),\beta)$ by replacing each node of $\G$ by a community with $r$ nodes. The node weight $\beta:[nr]\rightarrow [0,1]$ of the block network is inherited from $\alpha$ by the relation $\beta(x) = \alpha(\lfloor x/r \rfloor+1)$. For the edge weight, we define $B^{(r)}(A,\sigma^{2}) = \Gamma^{(r)}(A,\sigma^{2})/\max(\Gamma^{(r)}(A,\sigma^{2}))$, where $\Gamma^{(r)}(A,\sigma^{2})$ is the $(nr\times nr)$ random matrix obtained from $A$ by replacing each of its positive entries $a_{ij}>0$ by an $(r\times r)$ matrix of i.i.d. entries following a Gamma distribution with mean $a_{ij}$ and variance $\sigma^{2}$. Recall that the Gamma distribution with parameters $\alpha$ and $\beta$ has the following probability distribution function 
	\begin{align}
		f_{\alpha,\beta}(x) = \frac{\beta^{\alpha}}{\Gamma(\alpha)} x^{\alpha-1} e^{-\beta x} \mathbf{1}(x\ge 0). 
	\end{align}
	Since the mean and variance of the above distribution are given by $\alpha/\beta$ and $\alpha/\beta^{2}$, respectively, we may set $\alpha= a_{ij}^{2}/\sigma^{2}$ and $\beta =a_{ij}/\sigma^{2}$ for the $(r\times r)$ block corresponding to $a_{ij}$.
	
	For instance, consider two networks $\G_{1}=([6],A_{1},\alpha)$, $\G_{2}=([6],A_{2},\alpha)$ where $\alpha\equiv 1/6$ and 
	\begin{align}\label{eq:block_matrices_template}
		A_{1} = \begin{bmatrix}
			5 & 1 & 1 & 1 & 1 & 1 \\
			1 & 5 & 1 & 1 & 1 & 1 \\
			1 & 1 & 5 & 1 & 1 & 1 \\
			1 & 1 & 1 & 5 & 1 & 1 \\
			1 & 1 & 1 & 1 & 5 & 1 \\
			1 & 1 & 1 & 1 & 1 & 5 
		\end{bmatrix}
		,\qquad 
		A_{2} = \begin{bmatrix}
			1 & 1 & 1 & 5 & 5 & 1 \\
			1 & 1 & 1 & 1 & 1 & 5 \\
			5 & 1 & 1 & 5 & 1 & 5 \\
			5 & 1 & 1 & 1 & 1 & 2 \\
			1 & 5 & 1 & 1 & 1 & 1 \\
			1 & 1 & 5 & 10 & 1 & 1 
		\end{bmatrix}.
	\end{align}
	Let $B_{1}=B^{(10)}(A_{1},1)$, $B_{2}=B^{(10)}(A_{2},1.5)$, and $B_{3}=B^{(10)}(A_{2},0.5)$. Consider the stochastic block networks $\mathfrak{X}_{1}=([60], B_{1},\beta), \mathfrak{X}_{2}=([60], B_{2},\beta)$, and $\mathfrak{X}_{3}=([60], B_{3},\beta)$. The plots of matrices $B_{1}$ and $B_{2}$ are given in Figure \ref{fig:block_networks_pic}.

	In Figure \ref{fig:block_filtration} below, we plot the CHD profiles $\mathtt{f}:=\mathtt{f}(H_{k_{1},k_{2}},\mathfrak{X}\,|\, F_{k_{1},k_{2}})$ for $\mathfrak{X}=\mathfrak{X}_{1}, \mathfrak{X}_{2}$, and $\mathfrak{X}_{3}$.	The first row in Figure \ref{fig:block_filtration} shows the CHD profiles for $k_{1}=k_{2}=0$. At each filtration level $t\in [0,1]$, the value $\mathtt{f}(t)$ of the profile, in this case, means the proportion of diagonal entries in $B_{i}$ at least $t$ (see Example \ref{ex:motifs_Ex}). The CHD profiles for $\mathfrak{X}_{2}$ and $\mathfrak{X}_{3}$ drop quickly to zero by level $t=0.3$, as opposed to the profile for $\mathfrak{X}_{1}$, which stays close to height $1$ and starts dropping around level $t=0.4$. This is because, as can be seen in Figure \ref{fig:block_networks_pic}, entries in the diagonal blocks of the matrix $B_{1}$ are large compared to that in the off-diagonal blocks, whereas for the other two matrices $B_{1}$ and $B_{2}$, diagonal entries are essentially in the order of the Gamma noise with standard deviation $\sigma$.

	\begin{figure*}[h]
		\centering
		\includegraphics[width=0.7 \linewidth]{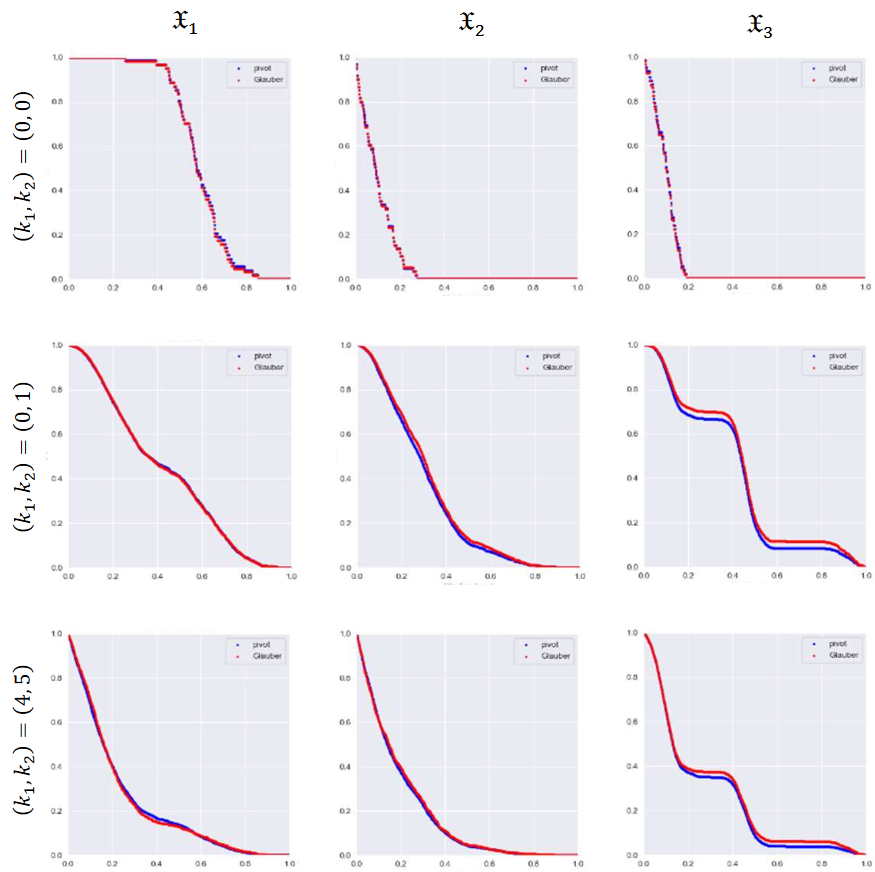}
		\caption{ Plots of CHD profiles $\mathtt{f}(H_{k_{1},k_{2}},\mathfrak{X}\,|\, F_{k_{1},k_{2}})$ for $\mathfrak{X}=\mathfrak{X}_{1}$ (first row), $\mathfrak{X}_{2}$ (second row), and $\mathfrak{X}_{3}$ (third row). To compute each profile, both the Glauber (red) and pivot (blue) chains are run up to $10^{5}$ iterations.
		}
		\label{fig:block_filtration}
	\end{figure*}

	For $\max(k_{1},k_{2})\ge 1$, note that the value of the profile $\mathtt{f}(t)$ at level $t$ equals the probability that the extra edge in $H_{k_{1},k_{2}}$ has weight $\ge t$ in $\mathfrak{X}$, when we sample a random copy of $F_{k_{1},k_{2}}$ from $\mathfrak{X}$. For instance, if $(k_{1},k_{2})=(0,1)$, this quantity is almost the density of edges in $\mathfrak{X}$ whose weights are at least $t$. But since the measure of random homomorphism $\x:F_{1,0}\rightarrow \mathfrak{X}$ is proportional to the edge weight $B_{i}(\x(0),\x(1))$, we are in favor of sampling copies of $F_{0,1}$ with large edge weight.

	In the second row of Figure \ref{fig:block_filtration}, the profile for $\mathfrak{X}_{3}$ differs drastically from the other two, which gradually decays to zero. The small variance in the Gamma noise for sampling $B_{3}$ makes the two values of $5$ and $10$ in $A_{2}$ more pronounced with respect to the `ground level' $1$. Hence we see two plateaus in its profile. As noted in the previous paragraph, the height of the first plateau (about 0.7), is much larger than the actual density (about 0.25) of the edges sample from blocks of value 5. A similar tendency could be seen in the third row of Figure \ref{fig:block_filtration}, which shows the CHD profiles for $(k_{1},k_{2})=(4,5)$. Note that the first plateau in the profile for $\mathfrak{X}$ now appears at a lower height (about 0.4). This indicates that sampling a copy of $F_{4,5}$ is less affected by the edge weights than sampling a copy of $F_{0,1}$.

	\subsection{Hierarchical clustering for networks and motif transform}
	\label{subsection:motif_transform_ex}
	
	\commHL{In this section, we discuss the application of the  motif transform to the setting of hierarchical clustering of networks.}
	

	\paragraph{Hierarchical clustering and dendrogram.}
	\commHL{
		Suppose we are given a finite set $X$ and fixed `levels' $0=t_{0}<t_{1}<\dots<t_{m}$ for some integer $m\ge 0$. Let $\mathcal{H}:=(\mathcal{F}_{t})_{t\in \{t_{0},\dots,t_{m}\}}$ be a sequence of collection of subsets of $X$, that is, $\mathcal{F}_{t_{k}}\subseteq 2^{X}$ with $2^{X}$ denoting the power set of $X$. We call  $\mathcal{H}$ a  \textit{hierarchical clustering} of $X$ if (1) $\mathcal{F}_{0}=X$ and $\mathcal{F}_{t_{m}}=\{ X \}$ and (2) for each $0\le a \le b \le m$ and for each $A\in \mathcal{F}_{t_{a}}$, there exists a unique $B\in \mathcal{F}_{t_{b}}$ with $A\subseteq B$. For each $t\in \{t_{0},\dots,t_{M}\}$, we call each $A\in \mathcal{F}_{t}$ a \textit{cluster} of $X$ at \textit{level $t$}. One can associate a tree $T=(V,E)$ to $\mathcal{H}$ by setting $V = \bigsqcup_{k=0}^{m} \mathcal{F}_{t_{k}}$ and letting the edge set $E$ consist of all pairs $(A,B)$ such that $A\in \mathcal{F}_{t_{k}}$, $B\in \mathcal{F}_{t_{k+1}}$, and $A\subseteq B$ for some $k=0,1,\dots, m-1$. The graph $T=(V,E)$ defined in this way is indeed a tree with root $\{X\}$ at level $t_{m}$ and a set of leaves $X$ at level $0$. We call $T$ a \textit{dendrogram} of $\mathcal{H}$. See \citep{jardine1971mathematical, carlsson2010characterization} for more details on hierarchical clustering and dendrograms. }
	
	\paragraph{Single-linkage hierarchical clustering for finite metric spaces.} 
	
	\commHL{
		\textit{Single-linkage hierarchical clustering} is a standard mechanism to obtaining a hierarchical clustering of a finite matrix space. Suppose $(X,d)$ is a finite metric space. 
		Let $0=t_{0}<t_{1}<\dots<t_{m}$ be the result of ordering all distinct values of the pairwise distances $d(x,y)$ for $x,y\in X$. For each $t\ge 0$, define the equivalence relation  $\simeq_{t}$ on $X$ as the \emph{transitive closure} of the relation  $d(x,y)\le t$, that is, 
		\begin{align}\label{eq:metric_equiv_relation}
			x\simeq_{t} y \quad \textup{$\Longleftrightarrow$} \quad \begin{matrix} 
				\textup{exists an integer $m\ge 1$ and $z_{0},\dots,z_{m}\in X$ s.t.} \\  \textup{$d(z_{i},z_{i+1})\leq t$ for $i=0,\dots,m-1$ and $z_{0}=x$, $z_{m}=y$.}
			\end{matrix} 
		\end{align}
		Then $\mathcal{H}:=(U_{t_{k}})_{0\le k \le m}$ is a hierarchical clustering of $X$. The associated dendrogram $T$ of $\mathcal{H}$ is called the \textit{single-linkage (hierarchical clustering) dendrogram} of $(X,d)$. }

	\paragraph{Single-linkage hierarchical clustering for networks.}

		
		{	\color{black}
			
			We introduce a method for computing the hierarchical clustering of networks based on a metric on the node set. Let $\G=([n],A,\alpha)$ be a network.  We view the weight $A(x,y)$ between distinct nodes as representing the magnitude of the relationship between them, so the larger $A(x,y)$ is, the stronger the nodes $x,y$ are associated.  Hence it would be natural to interpret the off-diagonal entries of $A$ as a measure of similarity between the nodes, as opposed to a metric $d$ on a finite metric space, which represents `dissimilarity' between points. 
			
			In order to define a metric $d_{A}$ on the node set $[n]$, first transform the pairwise similarity matrix $A$ into a pairwise \textit{dissimilarity} matrix $A'$ as 
			\begin{align}\label{eq:A_modified}
				A'(x,y) = \begin{cases}
					0 & \text{if $x=y$} \\
					\infty & \text{if $A(x,y)=0$ and $x\ne y$}\\
					\max(A) - A(x,y) & \text{otherwise}.
				\end{cases}
			\end{align}		
			We then define a metric $d_{A}$ on the node set $[n]$ by letting $d_{A}(x,y)$ be the smallest sum of all $A'$-edge weights of any sequence of nodes starting from $x$ and ending at $y$: 
			\begin{align}\label{eq:d_A_def}
				d_{A}(x,y):= \inf \left\{ \sum_{i=1}^{m} A'(x_{i},x_{i+1}) \,\bigg|\, x_{1}=x,\, x_{m+1}=y,\, x_{1},\dots,x_{m+1}\in [n] \right\}.
			\end{align}
			This defines a metric space $([n], d_{A})$ associated with the network $\G=([n], A, \alpha)$. We let $\mathcal{H}=\mathcal{H}(\G)$ to denote the hierarchical clustering of $([n], d_{A})$. We call the dendrogram $T=(V,E)$ of $\mathcal{H}(\G)$ the \textit{single-linkage heirarchical clustering dendrogram} of the network $\G$, or simply the \textit{dendrogram} of $\G$. Computing the metric $d_{A}$ in \eqref{eq:d_A_def} can be easily accomplished in $O(n^{3})$ time by using the  Floyd-Warshall algorithm \citep{floyd1962algorithm, warshall1962theorem}. See Figures \ref{fig:dendrogram_1}, \ref{fig:dendrogram_2}, and \ref{fig:austen_shakespeare_dendro} for network dendrograms computed in this way.

			The hierarchical clustering $\mathcal{H}$ of $\G$ defined above is closely related to the following notion of network capacity function. Given a network $\G=([n],A,\alpha)$, consider the  `capacity function' $T_{\G}:[n]^{2}\rightarrow [0,\infty)$ defined by
			\begin{align}\label{eq:def_capacity}
				T_{\G}(x,y) = \sup_{t\ge 0}\left\{ t\ge 0\,\bigg|\, \begin{matrix}
					\text{$\exists x_{0},x_{1},\cdots,x_{m}\in [n]$ s.t. $(x_{0},x_{m})=(x,y)$ or $(y,x)$ } \\
					\text{and $\min_{0\le i <m} A(x_{i},x_{i+1}) > t$}. 		
				\end{matrix}
				\right\}. 
			\end{align}
			That is, $T_{\G}(x,y)$ is the minimum edge weight of all possible walks from $x$ to $y$ in $\G$, where a \textit{walk} in $\G$ is a sequence of nodes $x_{0},\dots,x_{m}$ such that $A(x_{i},x_{i+1})>0$ for $i=1,\dots,m-1$. Let $\simeq_{t}$ denote the equivalence relation induced by $d_{A}$ as in \eqref{eq:metric_equiv_relation}. Then one can see that 
			\begin{align} \label{eq:network_dendro_equiv_capacity}
				x\simeq_{t} y \quad \textup{$\Longleftrightarrow$} \quad T_{\G}(x,y) \ge \max(A)-t \quad \textup{or} \quad x=y.
			\end{align}
			Thus, $x$ and $y$ merge into the same cluster in $\mathcal{H}$ at level $\max(A)-T_{\G}(x,y)$. Furthermore, it is easy to see that $T_{\G}$ satisfies the following `dual' to ultrametric condition (see \cite{clust-um} for how the ultrametric condition relates to dendrograms) for distinct nodes: 
			\begin{align}\label{eq:ultrametric}
				T_{\G}(x,y) \ge \min( T_{\G}(x,z), T_{\G}(z,y)) \qquad \text{$\forall$ $x,y,z\in [n]$ s.t. $x\ne y$.}
			\end{align}

			Note that $T_{\G}(x,x) = A(x,x)$ for all $x\in [n]$. Hence \eqref{eq:ultrametric} may not hold if $x=y$, as $T_{\G}(x,y)=A(x,x)$ could be less than the minimum of $T_{\G}(x,z)$ and $T_{\G}(z,x))$ (e.g., $\G$ a simple graph). If we modify the capacity function on the diagonal by setting $T_{\G}(x,x)\equiv \max(A)$ for all $x$, then \eqref{eq:ultrametric} is satisfied for all choices $x,y,z\in [z]$. This modification corresponds to setting $A'(x,x)=0$ in \eqref{eq:A_modified}.
			
			The above construction of the hierarchical clustering $\mathcal{H}$ of $\G=([n], A, \alpha)$ does not use diagonal entries of $A$. One can slightly modify the definition of hierarchical clustering of $\G$ in a way that it also uses the diagonal entries of $A$ by allowing each node $x$ to `appear' in the dendrogram at different times depending on its `self-similarity' $A(x,x)$. More precisely, define a relation $\simeq_{t}'$ on the node set $[n]$ by  $x \simeq_{t}' y$ if and only if $T_{\G}(x,y) \ge \max(A)-t$ for all $x,y\in [n]$ (not necessarily for distinct $x,y$). Then $x \simeq_{t}' x$ if and only if $t\ge \max(A)-A(x,x)$. Hence in order for the relation $\simeq_{t}'$ to be an equivalence relation, we need to restrict its domain to $\{x\in [n]\,|\, \max(A)-A(x,x)\le t\}$ at each filtration level $t$. The resulting dendrogram is called a \textit{treegram}, since its leaves may appear at different heights \citep{treegrams}. 

			Note that the capacity function in \eqref{eq:def_capacity} can be defined for graphons $U$ instead of networks. Hence by using \eqref{eq:network_dendro_equiv_capacity}, we can also define hierarchical clustering dendrogram for graphons in a similar manner.   The following example illustrates single-linkage hierarchical clustering of the three-block graphons from Example  \ref{ex:graphon_example}. 
			
		}
		
		\begin{ex}\label{ex:dendro_graphon}
			\normalfont
			
			Recall the graphons $U$, $U^{\circ 2}$, and $U\cdot U^{\circ 2}$ discussed in Example \ref{ex:graphon_example}. Note that $U$ is the graphon $U_{\G}$ associated to the network $\G=([3],A,\alpha)$ in Example \ref{ex:matrix_spectral}. Single-linkage hierarchical clustering dendrograms of the three networks corresponding to the three graphons are shown in Figure \ref{fig:graphon_dendro} (in solid + dotted blue lines), which are solely determined by the off-diagonal entries. Truncating each vertical line below the corresponding diagonal entry (dotted blue lines), one obtains the treegrams for the three networks (solid blue lines). 

			\begin{figure*}[h]
				\centering
				\includegraphics[width=0.85 \linewidth]{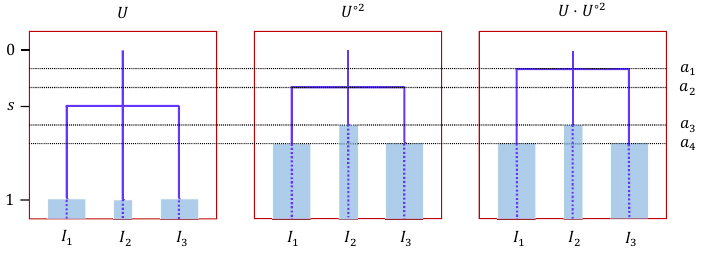}
				\caption{Dendrograms and treegrams of the networks associated to the graphons $U=U_{\G}$ (left), $U^{\circ 2}$ (middle), and $U\cdot U^{\circ 2 }$ (right) in  Example \ref{ex:graphon_example}. \commHL{The vertical axis show the values of the capacity function from \eqref{eq:def_capacity}. } $a_{1}=s^{2}(1+\epsilon)/2$, $a_{2}=s(1+\epsilon)/2$, $a_{3}=s^{2}(1-\epsilon/4)+\epsilon$, and $a_{4}=(1/2)+(s^{2}-1/2)\epsilon$. See the text in Example \ref{ex:dendro_graphon} for more details.
				}
				\label{fig:graphon_dendro}
			\end{figure*}
			
			\textup{Furthermore, one can also think of hierarchical clustering of the graphons by viewing them as networks with continuum node set $[0,1]$. The resulting dendrogram is shown in Figure \ref{fig:graphon_dendro} (solid blue lines + shaded rectangles) }$\hfill \blacktriangle$.
		\end{ex}

		\commHL{In the following example, we illustrate how motif transforms can be used to suppress weak connections between two communities in order to improve the recovery of the hierarchical clustering structure of a given network. }

		\begin{ex}[Barbell networks]\label{ex:barbell}
			\normalfont 
			
			\textup{In this example, we consider `barbell networks', which are obtained by connecting two networks by a single edge of weight 1. When the two networks are $\mathcal{H}_{1}$ and $\mathcal{H}_{2}$, we denote the resulting barbell network by $\mathcal{H}_{1}\oplus \mathcal{H}_{2}$, and we say $\mathcal{H}_{1}$ and $\mathcal{H}_{2}$ are the two components of $\mathcal{H}_{1}\oplus \mathcal{H}_{2}$.}
			
			\begin{figure*}[h]
				\centering
				\includegraphics[width=0.25 \linewidth]{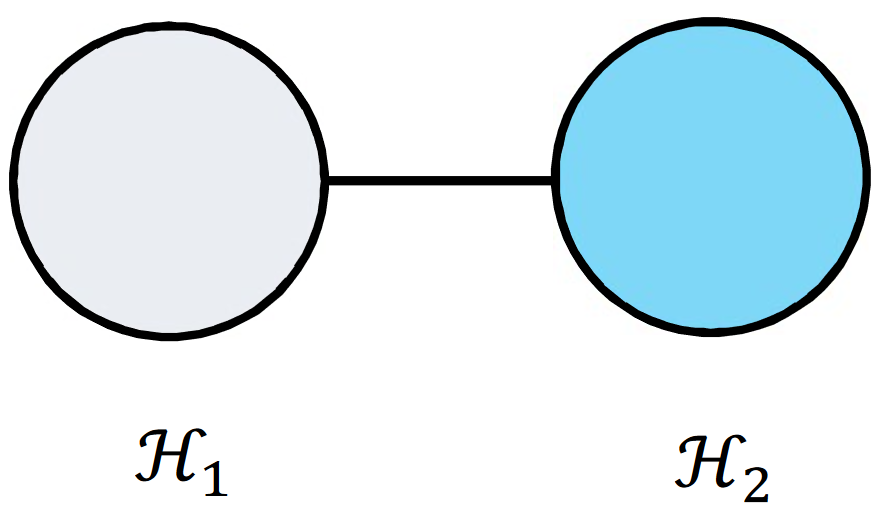}
				\caption{ Depiction of a barbell network.
				}
				\label{fig:barbell_depiction}
			\end{figure*}
			
			\textup{
				Recall the network $\G_{n}^{p,\alpha}$ defined in Example \ref{ex:torus_longedge}, which is the $(n\times n)$ torus with long-range edges added according to the parameters $p$ and $\alpha$. Also let $\mathfrak{X}=([nr],B_{r}(A,\sigma^{2}),\beta)$ denote the stochastic block network constructed from a given network $\G=([n],A,\alpha)$ (see Subsection \ref{subsection:chd_profile_SBM}). Denote the stochastic block network corresponding to $\G_{n}^{p,\alpha}$ with parameters $r$ and $\sigma$ by $\G_{n}^{p,\alpha}(r,\sigma)$. }
			
			\begin{figure*}[h]
				\centering
				\includegraphics[width=1 \linewidth]{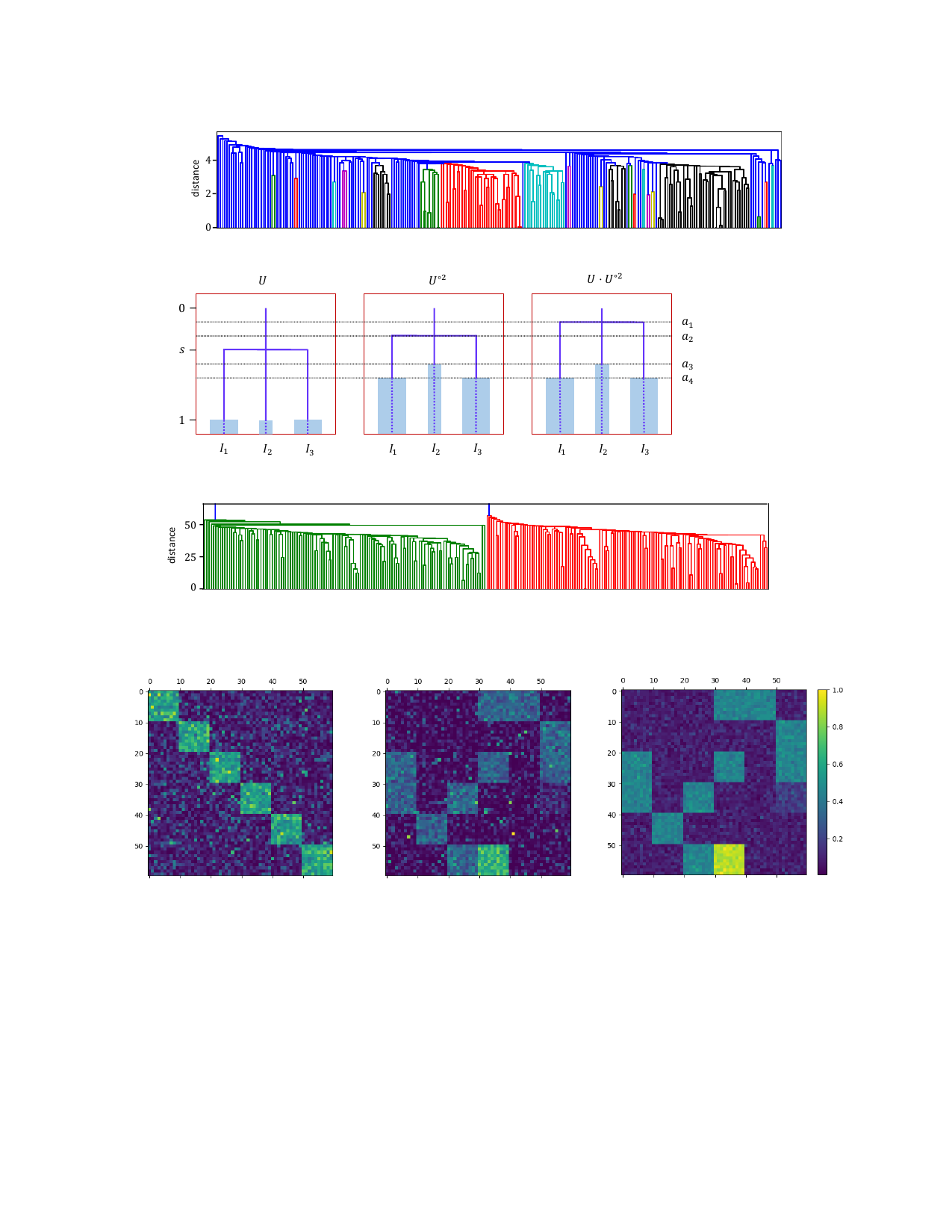}
				\caption{ Single-linkage dendrogram for the barbell network $\G_{2}$. 
				}
				\label{fig:dendrogram_1}
			\end{figure*}
			
			\textup{Now define barbell networks $\G_{1}:=\G_{10}^{0,0}\oplus \G_{10}^{0.2,0}$ and $\G_{2} = \G_{5}^{0,0}(5,0.6)\oplus \G_{5}^{0.2,0}(5,0.2)$. \commHL{Also, let $\G_{3}:=\G_{2}^{C_{3}}$ be the network obtained from $\G_{2}$ by the motif transform using the} triangle motif $C_{3}:=([3], \mathbf{1}_{\{(1,2),(2,3),(3,1) \}})$ (here the orientation of the edges of $C_{3}$ is irrelevant since the networks are symmetric). In each barbell network, the two components are connected by the edge between node 80 and node 53 in the two components.  For each $i\in \{1,2,3\}$, let $A_{i}$ denote the edge weight matrix corresponding to $\G_{i}$. The plots for $A_{i}$'s are given in Figure \ref{fig:barbell_pic}.}

			We are going to consider the single-linkage dendrograms of each barbell network for their hierarchical clustering analysis. \commHL{We omit the dendrogram of the simple graph $\G_{1}$.} For $\G_{2}$, the Gamma noise prevents all nodes from merging at the same level. Instead, we expect to have multiple clusters forming at different levels and they all merge into one cluster at some positive level $t>0$. Indeed, in the single-linkage dendrogram for $\G_{2}$ shown in Figure \ref{fig:dendrogram_1}, we do observe such hierarchical clustering structure of $\G_{2}$. 
			
			\begin{figure*}[h]
				\centering
				\includegraphics[width=1 \linewidth]{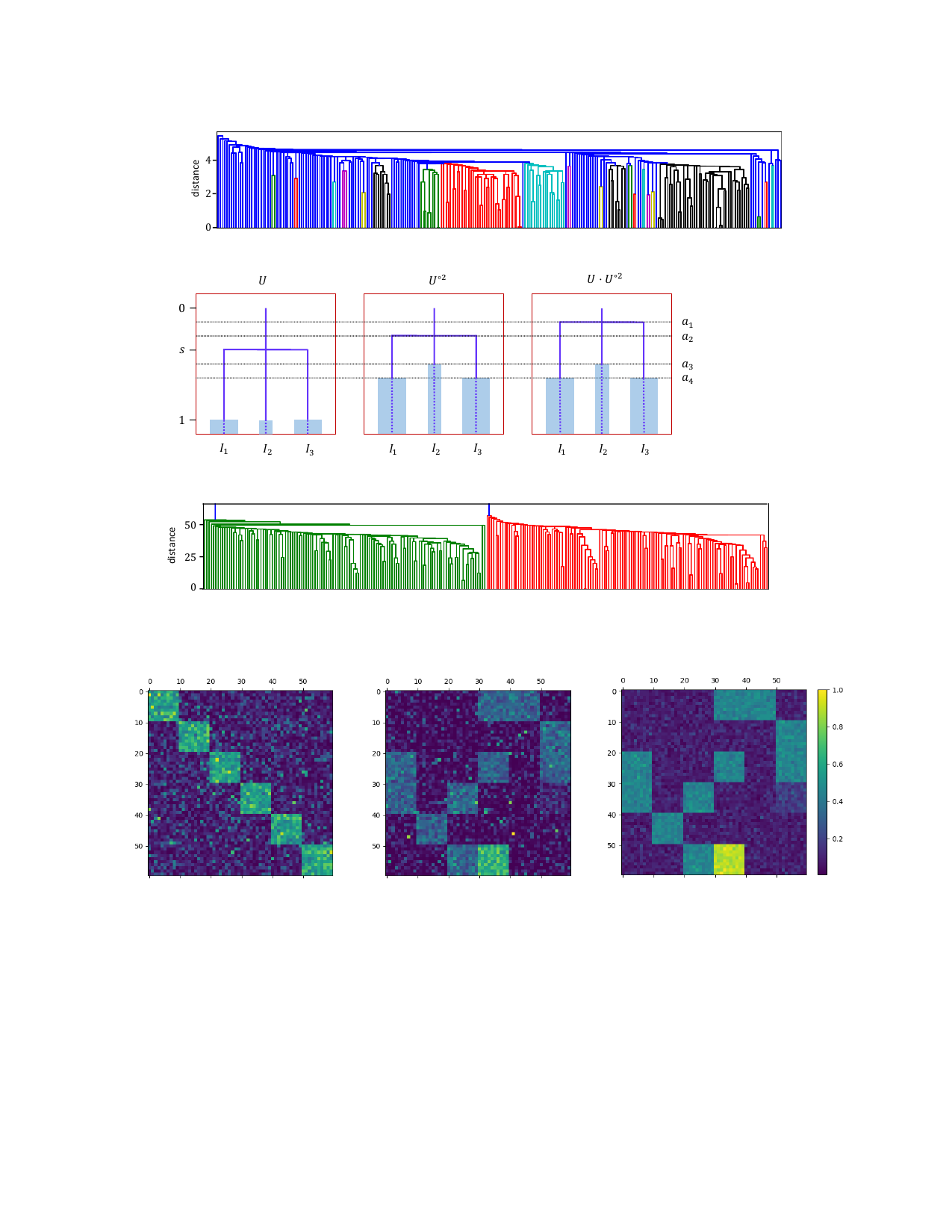}
				\caption{ Single-linkage dendrogram for barbell network $\G_{3}$, which is obtained by motif-transforming $\G_{2}$ using a triangle $C_{3}$. 
				}
				\label{fig:dendrogram_2}
			\end{figure*}
			
			\textup{However, the `single linkage' between the two main components of $\G_{2}$ is very marginal compared to the substantial interconnection within the components. We \commHL{may} use motif transforms prior to single-linkage clustering in order to better separate the two main components. The construction of $\G_{3} = \G_{2}^{C_{3}}$ using triangle motif transform and its dendrogram in Figure \ref{fig:dendrogram_2} demonstrate this point.
			}


			\textup{In the dendrogram of $\G_{3}$ shown in Figure \ref{fig:dendrogram_2}, we see that the two main clusters still maintain internal hierarchical structure, but they are separated at all levels $t\ge 0$.  A similar motif transform may be used to suppress weak connections in the more general situation in order to emphasize the clustering structure within networks, but without perturbing the given network too much.} $\hfill\blacktriangle$
			

		\end{ex}

		\section{Application I: Subgraph classification and Network clustering with Facebook networks}
		\label{section:FB}

		In this section, we apply our methods to a problem consisting
		of clustering given a data set of networks. In our experiment,
		we use the \textit{Facebook100 dataset}, which consists of the
		snapshots of the Facebook network of 100 schools in the US in
		Sep. of 2005. Since it was first published and analyzed by
		Traud, Mucha, and Porter in \cite{traud2012social}, it has
		been regarded as a standard data set in the field of social
		network analysis. In the dataset, each school's social network
		is encoded as a simple graph of anonymous nodes corresponding
		to the users, and nodes $i$ and $j$ are adjacent if and only
		if the corresponding users have a friendship relationship on the
		Facebook network. The networks have varying sizes: Caltech36
		is the smallest with 769 nodes and 16,656 edges, whereas
		Texas84 is the largest with 36,371 nodes and 1,590,655 edges. The
		lack of shared node labels and varying network sizes make it
		difficult to directly compare the networks and perform
		clustering tasks. For instance, for directly computing a
		distance between two networks of different sizes without a shared
		node labels, one needs to find optimal correspondence between
		the node sets  (as in \eqref{eq:ntwk_p_distance}), which is
		computationally very expensive. We overcome this difficulty by
		using our motif sampling for computing the Matrix of Average
		Clustering Coefficients (MACC) (see Definition \ref{def:MACC}) for each network. This
		the compressed representation of social networks will then
		be used for performing  hierarchical clustering on the whole
		collection of 100 networks.

		\begin{figure*}[h]
			\centering
			\includegraphics[width=0.97 \linewidth]{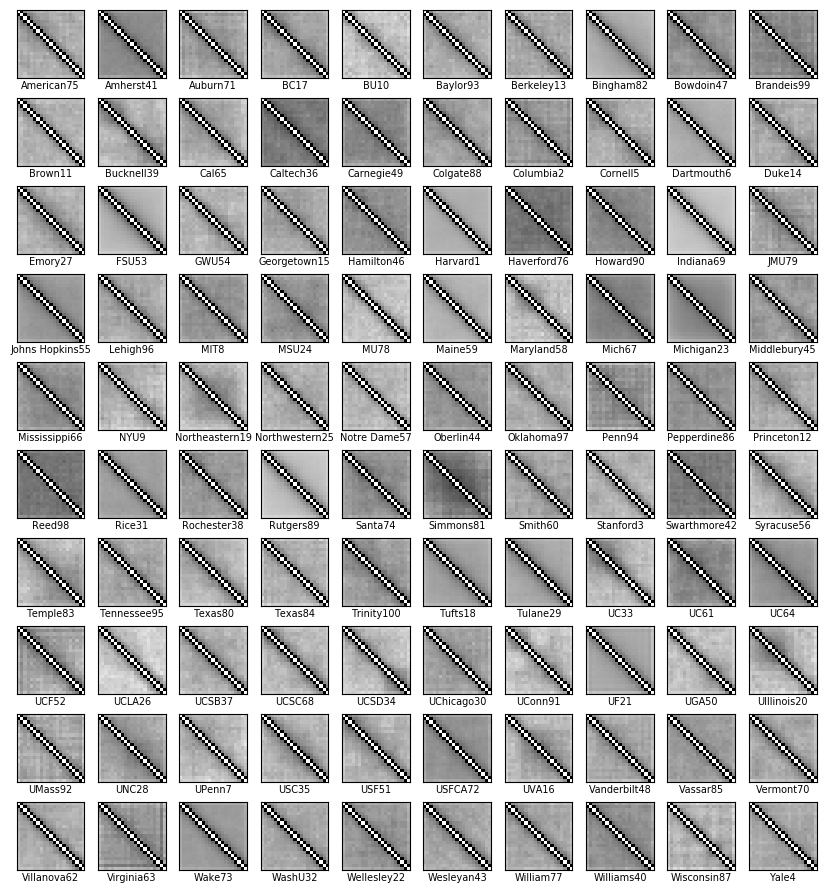}
			\caption{ Matrices of Average Clustering Coefficients
				(MACC) for the 100 Facebook network data set using
				the chain motif $F = ([21],
				\mathbf{1}_{\{(1,2),(2,3),\cdots,(20,21)
					\}})$. Values of the entries are shown in
				greyscale with black = 1 and white = 0. The two main diagonals correspond to the edges in the motif $F_{0,20}$ and always have a value of 1.  Each entry $(i,j)$ for $|i-j|>1$ equals to the probability of seeing the corresponding `long-range' edge along a uniformly sampled copy of the chain motif. 
			}
			\label{fig:FB_MACC}
		\end{figure*}
		
		\subsection{MACCs for the Facebook100 dataset}
		
		The full MACCs of size $21$ for the 100 facebook networks are shown in Figure \ref{fig:FB_MACC}. We used the chain motif \commHL{$F = ([21], \mathbf{1}_{\{(1,2),(2,3),\cdots,(20,21) \}})$} of 20 edges, which we sampled from each network by Glauber chain (see Definition \ref{def:glauber_chain}) for $2n\log n$ iterations, where $n$ denotes the number of nodes in the given network, which we denote by $\mathcal{G}$. Each entry $(i,j)$ of the MACC is computed by taking the time average in \eqref{eq:cond_hom_ergodic} with motifs \commHL{$F$ and $H=H_{ij} := ([21], \mathbf{1}_{ \{(i,j)\}})$.} This time average along the Glauber chain $F\rightarrow \G$ converges to a $21\times 21$ matrix as shown in Figure \ref{fig:FB_MACC}. Note that the two main diagonals on $|i-j|=1$ are always 1 as they correspond to the edges of the chain motif $F$ embedded in the network. For $|i-j|>1$, the $(i,j)$ entry equals the conditional homomorphism density $\mathtt{t}(H_{ij}, \G\,|\, F)$, which is the probability of seeing the corresponding `long-range' edge $(i,j)$ along a uniformly sampled copy of the chain motif $F$ from $\G$. We note that in Figure \ref{fig:FB_MACC}, in order to emphasize the off-diagonal entries, MACCs are plotted after the square-root transform.

		MACC gives a natural and graphical generalization of the network clustering coefficient (see Example \ref{ex:avg_clustering_coeff}). For instance, consider the MACCs of \dataset{Caltech}, \dataset{Harverford}, \dataset{Reed}, \dataset{Simmons}, and \dataset{Swarthmore} in Figure \ref{fig:FB_MACC}. These are relatively small private or liberal arts schools, so one might expect to see stronger clustering among a randomly sampled chain of 21 users in their Facebook network. In fact, their MACCs show large values (dark) off of the two main diagonals, indicating that it is likely to see long-range connections along a randomly sampled chain \commHL{$F$} of 21 friends. On the other hand, the MACCs of \dataset{Indiana}, \dataset{Rutgers}, and \dataset{UCLA} show relatively small (light) values away from the two main diagonals, indicating that it is not very likely to see long-range friendships among a chain of 21 friends in their Facebook network. Indeed, they are large public schools so it is reasonable to see less clustered friendships in their social network.

		\subsection{Subgraph classification}
		
		\commHL{In this section, we consider the subgraph classification problem in order to compare the performance of MACCs to that of  classical network summary statistics such as edge density, diameter, and average clustering coefficient. }
		
		
		\commHL{
			The problem setting is as follows. Suppose we have two networks $\G_{1}$ and $\G_{2}$, not necessarily of the same size. From each network $\G_{i}$, we sample 100 connected subgraphs of 30 nodes by running a simple symmetric random walk on $\G_{i}$ until it visits 30 distinct nodes and then taking the induced subgraph on the sampled nodes. Subgraphs sampled from network $\G_{i}$ get label $i$ (see Figure \ref{fig:subgraphs_ex_classification} for examples of subgraphs subject to classification). Out of the total of 200 labeled subgraphs, we use 100  (50 from each network) to train \commHL{several} logistic regression classifiers corresponding to the input features consisting of various network summary statistics of the subgraphs --- edge density, minimum degree, maximum degree, (shortest-path) diameter, degree assortativity coefficient \citep{newman2002assortative}, number of cliques, and average clustering coefficient --- as well as MACCs at four scales $k\in \{5,10,15,20\}$. \commHL{Each trained logistic classifier} is used to classify the remaining 100 labeled subgraphs (50 from each network). The performance is measured by using the area-under-curve (AUC) metric for the receiver-operating characteristic (ROC) curves.} 
		
		\commHL{We compare the performance of a total of 11 logistic classifiers trained on the various summary statistics of subgraphs described above using the seven Facebook social networks \dataset{Caltech}, \dataset{Simmons}, \dataset{Reed}, \dataset{NYU}, \dataset{Virginia}, \dataset{UCLA}, and \dataset{Wisconsin}. There are total 21 pairs of distinct networks $(\G_{1},\G_{2})$ we consider for the subgraph classification task. } For each pair of distinct networks, we repeated the same experiment ten times and reported the average AUC scores together with their standard deviations. 
		
		\begin{figure*}[h]
			\centering
			\includegraphics[width=1 \linewidth]{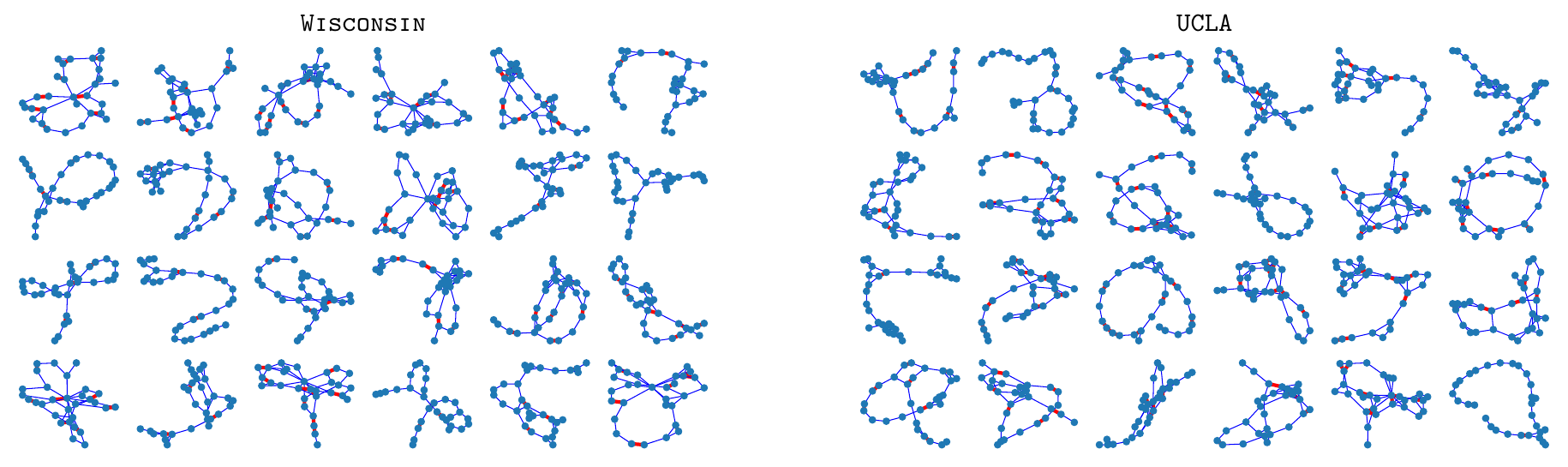}
			\caption{ \commHL{Examples of 30-node connected subgraphs from two Facebook social networks \dataset{Wisconsin} and \dataset{UCLA} 
					Each subgraph is sampled by running a simple symmetric random walk on the network until visiting 30 distinct nodes and then taking the induced subgraph on the sampled nodes.} 
			}
			\label{fig:subgraphs_ex_classification}
		\end{figure*}

		\commHL{
			As we can see from the results reported in Table \ref{table:subgraph_classification1}, classification using MACCs achieves the best performance in all 21 cases 
			This indicates that MACCs are network summary statistics that are more effective in capturing structural information shared among subgraphs from the same network than the benchmark network statistics. Furthermore, observe that the classification performance using MACCs is mostly improved by increasing the scale parameter $k$. This show that MACCs do capture not only local scale information (recall that the average clustering coefficient is closely related to MACC with $k=2$, see Example \ref{ex:avg_clustering_coeff}), but also the mesoscale (intermediate between local and global scales) structure of networks \citep{milo2002network, alon2007network, schwarze2020motifs}. 
		}

		\begin{table*}[h]
			\centering
			\includegraphics[width=1 \linewidth]{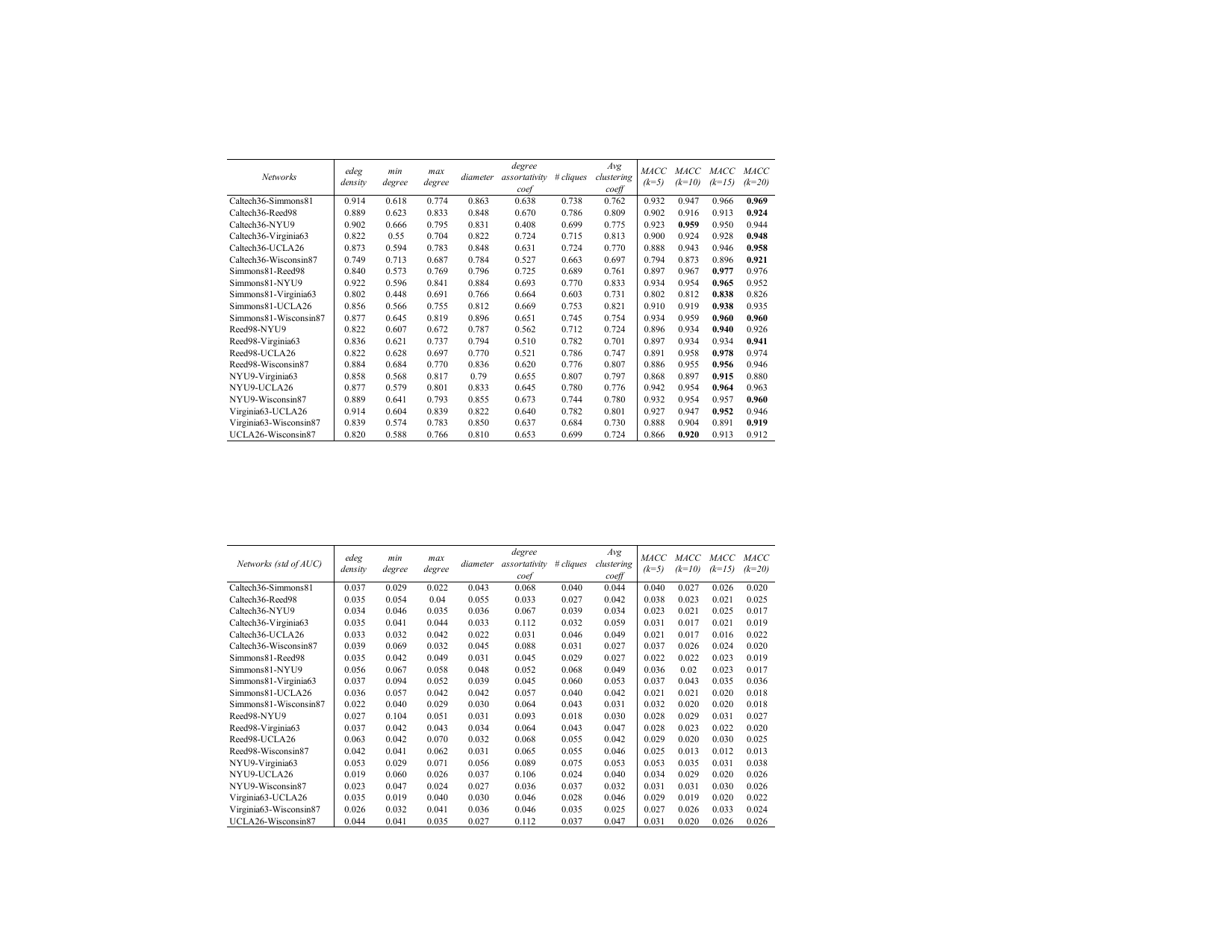}
			\caption{  \commHL{Performance benchmark on subgraph classification tasks. From the two Facebook social networks mentioned in \commHL{in each row}, 100 subgraphs of 30 nodes are sampled. \commHL{Different logistic regression classifiers are then trained using eleven different statistics} 
					of the sampled subgraphs on a 50\% training set. The classification performance on the other 50\% test set is reported as the area-under-curve (AUC) for the receiver-operating characteristic (ROC) curves. The table shows the mean AUC over ten independent trials. The best performance in each case is marked in bold. The standard deviations are reported in Table \ref{table:subgraph_classification_std} in the appendix.}
			}
			\label{table:subgraph_classification1}
		\end{table*}

		\subsection{Clustering the Facebook networks via MACCs}
		
		For a more quantitative comparison of the MACCs in Figure \ref{fig:FB_MACC}, we show a multi-dimensional scaling of the MACCs together with cluster labels obtained by the $k$-means algorithm with $k=5$ in Figure \ref{fig:MDS2d} Each school's Facebook network is represented by its $21\times 21$ MACC, and mutual distance between two networks are measured by the Frobenius distance between their MACCs. Note that, as we can see from the corresponding MACCs, the five schools in the top left cluster in Figure \ref{fig:MDS2d} are private schools with a high probability of long-range connections, whereas all schools including \dataset{UCLA}, \dataset{Rutgers}, and \dataset{Indiana} in the bottom right cluster in Figure \ref{fig:MDS2d} have relatively sparse long-range edges. For a baseline comparison, we show the result of the $k$-means clustering of the same dataset only using the number of nodes and average degree for each network in Figure \ref{fig:baseline_clustering}. The two clustering results have some similarities but also some interesting differences: The cluster that contains small private schools \dataset{Caltech} and \dataset{Reed}; The location of \dataset{UCLA} and \dataset{USFCA} with respect to other clusters. This shows qualitatively different clustering results can be obtained by using the local clustering structure of the networks encoded in their MACCs instead of the macroscopic information counting nodes and edges.

		\begin{figure*}[h]
			\centering
			\includegraphics[width=1 \linewidth]{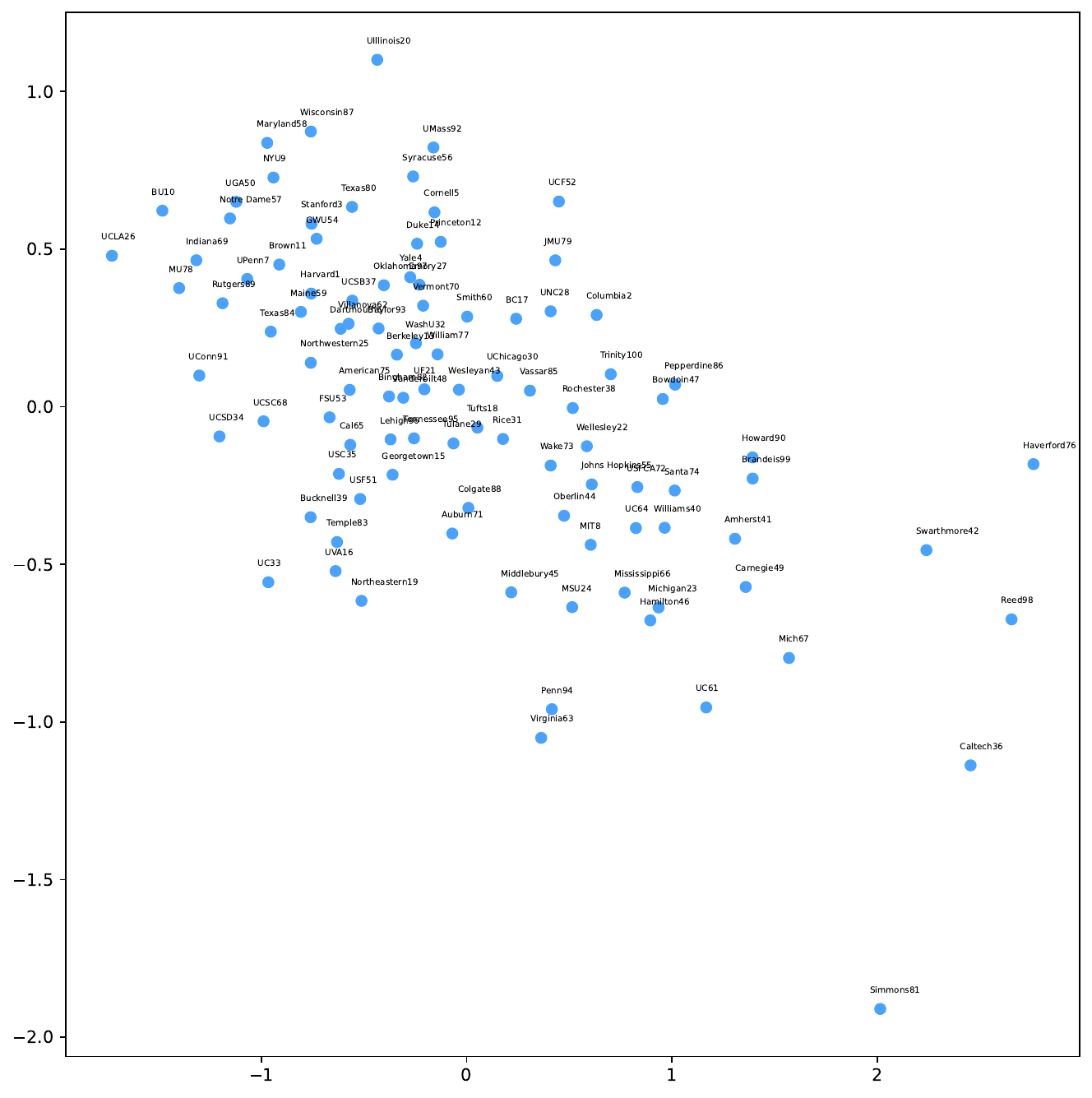}
			\caption{\commHL{Multi-dimensional scaling plot of the MACCs
					of the Facebook100 dataset in Figure
					\ref{fig:FB_MACC}. \commHL{We measured distance between two MACCs using the matrix Frobenius norm.}} 
		}
		\label{fig:MDS2d}
	\end{figure*}
	
	\begin{figure*}[h]
		\centering
		\includegraphics[width=1 \linewidth]{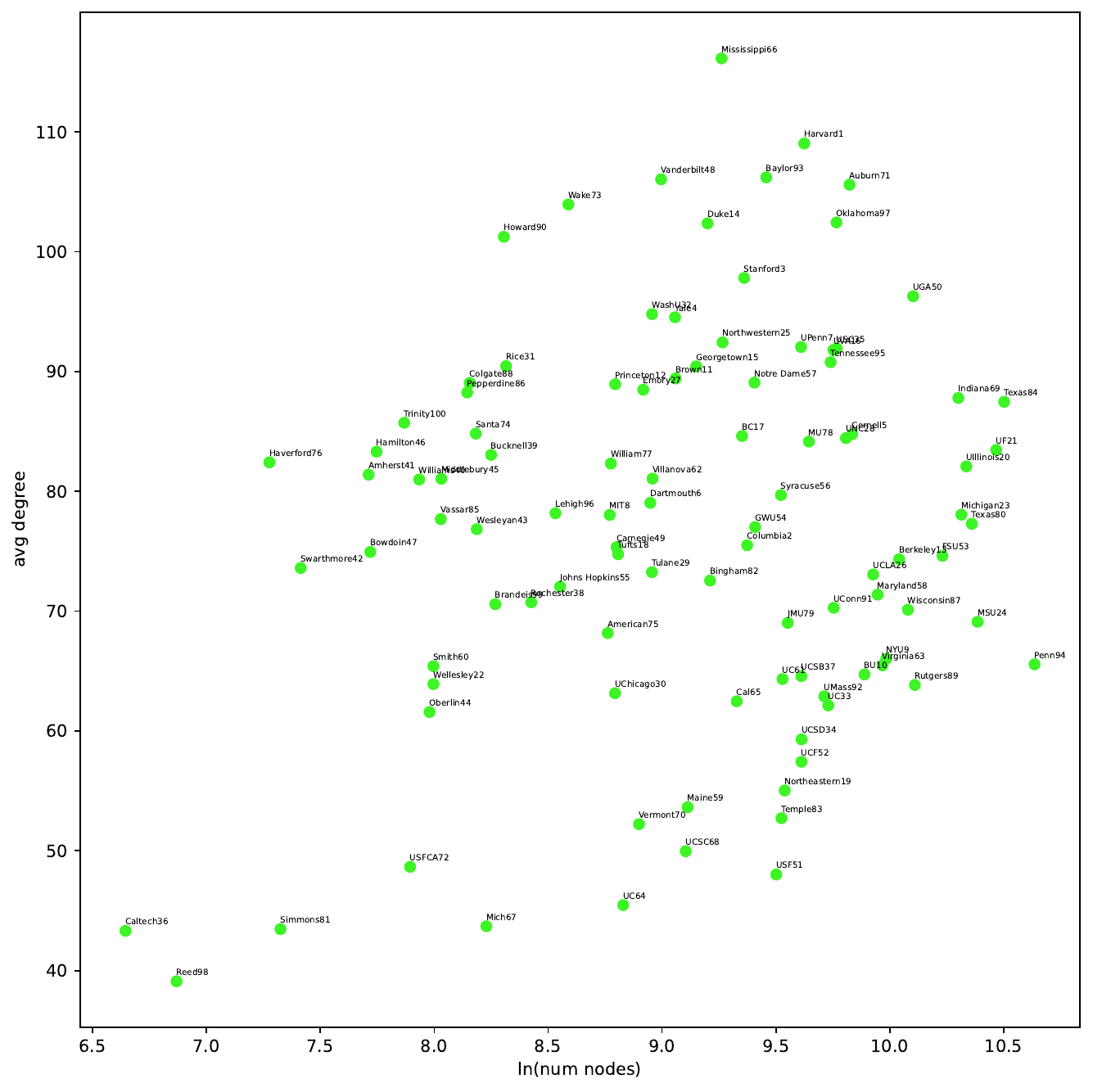}
		\caption{\commHL{A two-dimensional scatter plot of the Facebook100 dataset using the average degree and the natural logarithm of the number of nodes.} 
		}
		\label{fig:baseline_clustering}
	\end{figure*}

	\begin{figure*}[h]
		\centering
		\includegraphics[width=1 \linewidth]{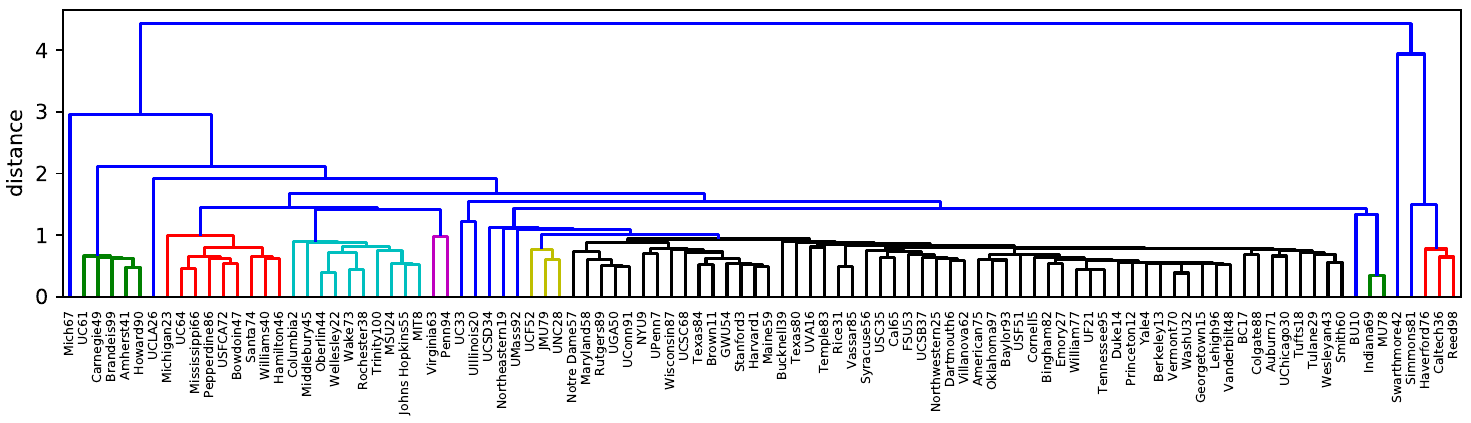}
		\caption{ Single-linkage hierarchical clustering
			dendrogram of the Facebook100 dataset using the
			$21\times 21$ matrices of average clustering
			coefficients (MACC) shown in Figure \ref{fig:FB_MACC}. Two schools with similar MACCs merge early in the dendrogram. Clusters emerging before level $1$ are shown in non-blue colors. 
		}
		\label{fig:FB_dendro}
	\end{figure*}
	
	\begin{figure*}[h]
		\centering
		\includegraphics[width=1 \linewidth]{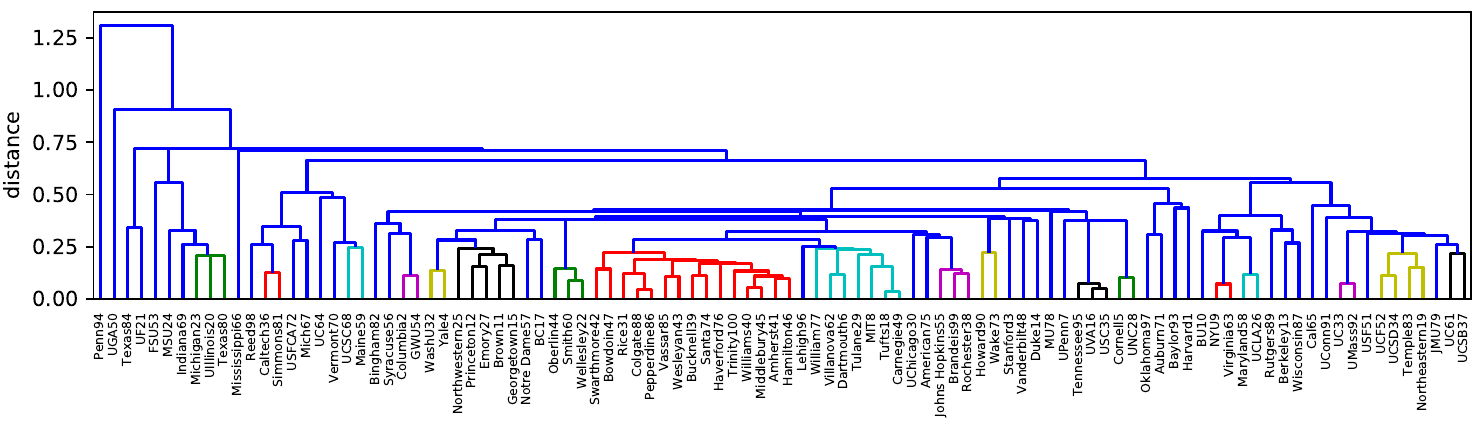}
		\caption{ Single-linkage hierarchical clustering
			dendrogram of the Facebook100 dataset using the normalized $L_{2}$-distance using the number of nodes and average degree used in Figure \ref{fig:baseline_clustering}. Clusters emerging before level $0.25$ are shown in non-blue colors.  
		}
		\label{fig:FB_dendro}
	\end{figure*}

	We also show a single-linkage hierarchical clustering dendrogram of the MACCs in Figure \ref{fig:FB_dendro}, where two schools whose MACCs are Frobenius distance $d$ away from each other merge into the same cluster at height $d$. Similarly, as we have seen in Figure \ref{fig:MDS2d}, the rightmost cluster consisting of the private schools \dataset{Simmons}, \dataset{Haverford}, \dataset{Caltech}, and \dataset{Reed} is separated by any other schools by distance as least 4; In the middle, we also observe the cluster of public schools including \dataset{Maryland}, \dataset{Rutgers}, and \dataset{UConn}. Lastly, we also provide a dendrogram using the baseline network metric using the normalized number of nodes and average degrees as used in Figure \ref{fig:baseline_clustering}. 
	
	Lastly, we also remark that an entirely different approach for network data clustering as well as an application to the Facebook100 dataset is presented in \cite{onnela2012taxonomies}. There, a given network's community structure is encoded as a profile of a 3-dimensional `mesoscopic response function' (MRF), which is computed by the multiresolution Potts model for community detection with varying scaling parameters.  MRFs encode the global community structure of a network, whereas MACCs capture local community structure at a chosen scale. 
	
	\subsection{Computational complexity and remarks}
	
	We can estimate the
	the complexity of the MACC-based method as follows: each step of the Glauber chain update $\mathbf{x}\mapsto \mathbf{x}'$ has
	the complexity of order $O(k \Delta(F) \Delta(\mathbf{x}))$, where $k$ denotes the number of nodes in the motif $F$, and $\Delta(\cdot)$ denotes the maximum degree of a simple graph, and $\Delta(\cdot)$ denotes the maximum degree of the nodes in the image of the homomorphism $\x$ in $\G$. Note the trivial bound $\Delta(\mathbf{x})\le \Delta(\G)$. By adding up these terms for a given number of iterations, the average time complexity of a single Glauber chain update is approximately $O(k \Delta(F) \cdot \texttt{avgdeg}(\G))$, where $\texttt{avgdeg}(\G)$ denotes the average degree of $\G$.  For dense networks, $\textup{avgdeg}(\G)$
	maybe large but the mixing time of the Glauber chain is small (see Theorem \ref{thm:gen_coloring_mixing_tree}); for sparse
	networks, the Glauber chain takes longer to converge but
	$\texttt{avgdeg}(\G)$ is small. In our experiments, we used $2n\log n$
	steps of the Glauber chain for each network $\G$ with $n$ nodes
	resulting in the total computational cost of $O(n\log n\cdot \textup{avgdeg}(\G))$. One could use fewer iterations to quickly get a crude estimate. 
	
	We used a modest computational resource for our experiments: A quad-core 10th Gen. Intel Core i5-1035G7 Processor and 8 GB LPDDR4x RAM with Windows 10 operating system. The actual times for computing the MACCs shown in Figure \ref{fig:FB_MACC} are shown in Table \ref{fig:computation_time_table}. The
	computation can easily be parallelized even for computing MACC for a single network. Indeed, since the MACC of a given
	network is computed by a Monte Carlo integration, one can use multiple Glauber chains on different cores and average the individual results to reduce the computation time by a large factor. All scripts for replicating the experiments can be obtained from our GitHub repository \url{https://github.com/HanbaekLyu/motif_sampling}.
	
	We close this section by pointing out some of the key
	advantages of our method for the network clustering
	problem. Our method can efficiently handle networks of
	different sizes without node labels.  Indeed, note that the MACC of a given network is invariant
	under node relabeling, and regardless of the size of the
	network, we obtain a low-dimensional representation in the
	form of a MACC of fixed size, which can be tuned as a user-defined parameter (by making different choices of the
	underlying motif $F$). Also, as we have discussed in the previous paragraph, MACCs are interpretable in terms of the average clustering structure of the network so we can interpret the result of a clustering algorithm based on MACCs.
	
	

	\vspace{0.1cm}
	
	\section{Application II: Textual analysis and Word Adjacency Networks}
	\label{section:applications}

	\textit{Function word adjacency networks} (WANs) are weighted networks introduced by Segarra, Eisen, and Ribeiro in the context of authorship attribution \citep{segarra2015authorship}. Function words are words that are used for the grammatical purpose and do not carry lexical meaning on their own, such as \textit{the}, \textit{and}, and \textit{a} (see \cite{segarra2015authorship} for the full list of 211 function words). After fixing a list of $n$ function words, for a given article $\mathcal{A}$, we construct a $(n\times n)$  \textit{frequency matrix} $M(\mathcal{A})$ whose $(i,j)$ entry $m_{ij}$ is the number of times that the $i$th function word is followed by the $j$th function word within a forward window of $D=10$ consecutive words (see  \cite{segarra2015authorship} for details). For a given article $\mathcal{A}$, we associate a network $\G(\mathcal{A})=([n], A, \alpha)$, where $\alpha\equiv 1/n$ is the uniform node weight on the function words and $A$ is a suitable matrix obtained from normalizing the frequency matrix $M(\mathcal{A})$. In \cite{segarra2015authorship}, the authors used row-wise normalization to turn the frequency matrix into a Markov transition kernel and then used Kullback–Leibler (KL) divergence to compare them for a classification task. Use the same normalization for the same purpose (see Table \ref{table:WAN_classificaiton_rate}). In all other simulations, we use the global normalization $A = M(\mathcal{A})/\max(M(\mathcal{A}))$ as it leads to more visually distinctive CHD profiles among different authors (see, e.g., Figure \ref{fig:WAN_profile_00}).

	The particular data set we will analyze in this section consists of the following 45 novels of the nine authors listed below:  
	\begin{description}
		\item{1. \textit{Jacob Abbott}:} Caleb in the Country, Charles I, Cleopatra, Cyrus the Great, and Darius the Great
		
		\item{2. \textit{Thomas Bailey Aldrich}:} Marjorie Daw, The Cruise of the Dolphin, The Little Violinist, Mademoiselle Olympe Zabriski, and A Midnight Fantasy
		
		\item{3. \textit{Jane Austen}:} Northanger Abbey, Emma, Mansfield Park, Pride and Prejudice, and Sense and Sensibility
		
		\item{4. \textit{Grant Allen}:} The British Barbarians, Biographies of Working Men, Anglo-Saxon Britain, Charles Darwin, and An African Millionaire

		\item{5. \text{Charles Dickens}:} A Christmas Carol, David Copperfield, Bleak House, Oliver Twist, and Holiday Romance
		
		\item{6. \textit{Christopher Marlowe}:} Edward the Second, The Tragical History of Doctor Faustus, The Jew of Malta, Massacre at Paris, and Hero and Leander and Other Poems
		
		\item{7. \textit{Herman Melville}:} Israel Potter, The Confidence-Man, Moby Dick; or The Whale, Omoo: Adventures in the South Seas, and Typee

		\item{8. \textit{William Shakespeare}:} Hamlet, Henry VIII, Julius Cesar, King Lear, and Romeo and Juliet
		
		\item{9. \textit{Mark Twain}:} Adventures of Huckleberry Finn, A Horse's Tale, The Innocents Abroad,  The Adventures of Tom Sawyer, and A Tramp Abroad
	\end{description}
	
	
	The frequency matrices corresponding to the above novels are recorded using a list of $n=211$ function words (see supplementary material of \cite{segarra2015authorship}). These matrices are sparse and spiky, meaning that most entries are zero and that there are a few entries that are very large compared to the others. For their visual representation, in the first row in Figure \ref{fig:intro_sim}, we plot the heat map of some of the frequency matrices after a double `log transform' $A\mapsto \log(A+\mathbf{1})$ and then normalization $B\mapsto B/\max(B)$.

	Next, we find that the corresponding WANs contain one large connected component and a number of isolated nodes. This can be seen effectively by performing single-linkage hierarchical clustering on these networks. In Figure \ref{fig:austen_shakespeare_dendro} we plot the resulting single-linkage dendrograms for two novels: "Jane Austen - Pride and Prejudice" and "William Shakespeare  - Hamlet". In both novels, the weight between the function words "of" and "the" is the maximum and they merge at level 1 (last two words in Figure \ref{fig:austen_shakespeare_dendro} top and the fourth and fifth to last in Figure \ref{fig:austen_shakespeare_dendro} bottom). On the other hand, function words such as "yet" and "whomever" are isolated in both networks (first two words in Figure \ref{fig:austen_shakespeare_dendro} top and bottom).

	\subsection{CHD profiles of the novel data set}
	\label{subsection:CHD_WAN}
	
	We compute various CHD profiles of the WANs corresponding to our novel data set. We consider the following three pairs of motifs: (see Example \ref{ex:motifs_Ex})
	\begin{align}
		(H_{0,0},H_{0,0}): &\qquad  H_{0,0} = (\{0\}, \mathbf{1}_{\{(0,0)\}}) \\
		(F_{0,1},F_{0,1}): &\qquad F_{0,1} = (\{0,1\}, \mathbf{1}_{\{(0,1)\}}) \\
		(H_{1,1},F_{1,1}): &\qquad H_{1,1} = (\{0,1,2\}, \mathbf{1}_{\{(1,2)\}}),\qquad  F_{1,1} = (\{0,1,2\}, \mathbf{1}_{\{(0,1), (0,2)\}}).
	\end{align}
	The CHD profiles of WANs corresponding to the 45 novels are given in Figures \ref{fig:WAN_profile_00}, \ref{fig:WAN_profile_01}, and \ref{fig:WAN_profile_11}. 
	
	Figure \ref{fig:WAN_profile_00} as well as the second row of Figure \ref{fig:intro_sim} below show the CHD profiles $\mathtt{f}(H_{0,0},\G\,|\, H_{0,0})$ for the pair of `self-loop' motifs. At each filtration level $t\in [0,1]$, the value $\mathtt{f}(t)$ of the profile, in this case, means roughly the density of self-loops in the network $\G_{\mathcal{A}}$ whose edge weight exceed $t$. In terms of the function words, the larger value of $\mathtt{f}(t)$ indicates that more function words are likely to be repeated in a given $D=10$ chunk of words. All of the five CHD profiles for Jane Austen drop to zero quickly and vanishes after $t=0.4$. This means that in her five novels, function words are not likely to be repeated frequently in a short distance. This is in a contrast to the corresponding five CHD profiles for Mark Twain. The rightmost long horizontal bars around height $0.4$ indicate that, among the function words that are repeated within a 10-ward window at least once, at least $40\%$ of them are repeated almost with the maximum frequency. In this regard, from the full CHD profiles given in Figure \ref{fig:WAN_profile_00}, the nine authors seem to divide into two groups. Namely, Jane Austen, Christopher Marlowe, and William Shakespeare have their $(0,0)$ CHD profiles vanishing quickly (less frequent repetition of function words), and the other five with persisting $(0,0)$ CHD profiles (more frequent repetition of function words).

	Figure \ref{fig:WAN_profile_01} shows the CHD profiles $\mathtt{f}(F_{0,1},\G\,|\, F_{0,1})$. The value $\mathtt{f}(t)$ of the CHD profile in this case can be viewed as the tail probability of a randomly chosen edge weight in the network, where the probability of each edge $(i,j)$ is proportional to the weight $A(i,j)$. The CHD profiles for Mark Twain seem to persist longer than that of Jane Austen as in the self-loop case, the difference is rather subtle in this case. 
	
	Lastly, Figure \ref{fig:WAN_profile_11} shows the CHD profiles $\mathtt{f}(H_{1,1},\G\,|\, F_{1,1})$. The value $\mathtt{f}(t)$ of the CHD profile in this case can be regarded as a version of the average clustering coefficient for the corresponding WAN (see Example \ref{ex:avg_clustering_coeff}). Namely, the value $\mathtt{f}(t)$ of the profile at level $t$ is the conditional probability that two random nodes with a common neighbor are connected by an edge with intensity $\ge t$. In terms of function words, this is the probability that if we randomly choose three function words $x,y$, and $z$ such that $x$ and $y$ are likely to appear shortly after $z$, then $y$ also appear shortly after $x$ with more than a proportion $t$ of all times. The corresponding profiles suggest that for Jane Austen, two function words with commonly associated function words are likely to have a very weak association. On the contrary, for Mark Twain, function words tend to be more strongly clustered. From Figure \ref{fig:WAN_profile_11}, one can see that the $(1,1)$ CHD profile of Shakespeare exhibits fast decay in a manner similar to Jane Austen's CHD profiles. While the five CHD profiles of most authors are similar, Grant Allan and Christopher Marlowe show somewhat more significant differences in their CHD profiles among different novels.

	\subsection{Authorship attribution by CHD profiles}
	
	In this subsection, we analyze the CHD profiles of the dataset of novels more quantitatively by computing the pairwise $L^{1}$-distances between the CHD profiles. Also, we discuss an application in authorship attribution. 
	
	In order to generate the distance matrices, we partition the 45 novels into `validation set' and `reference set' of sizes 9 and 36, respectively, by randomly selecting a novel for each author. Note that there are a total $5^9$ such partitions. For each article $i$ in the validation set, and for each of the three pairs of motifs, we compute the $L^{1}$-distance between the corresponding CHD profile of the article $i$ and the mean CHD profile of each of the nine authors, where the mean profile for each author is computed by averaging the four profiles in the reference set. This will give us a $9\times 9$ matrix of $L^{1}$-distances between the CHD profiles of the nine authors. We repeat this process for $10^4$ iterations to obtain a $9\times 9 \times 10^4$ array. The average of all $10^4$ distance matrices for each of the three pairs of motifs are shown in  Figure \ref{fig:WAN_dist_mx}.
	
	\begin{figure*}[h]
		\centering
		\includegraphics[width=1 \linewidth]{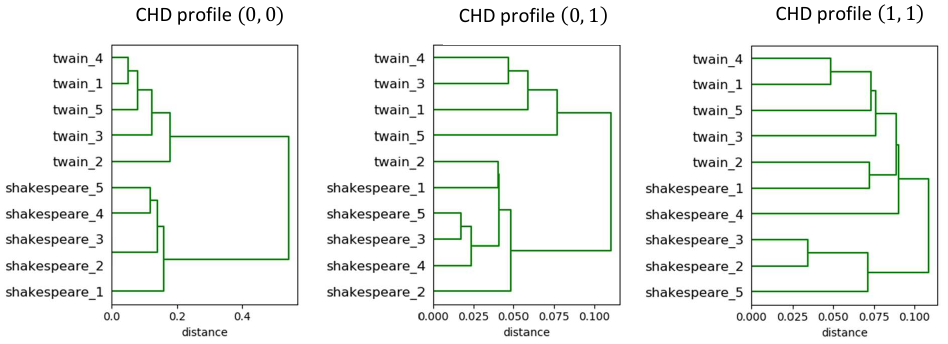}
		\caption{ Single-linkage dendrogram of the $L^{1}$-distance matrices between the CHD profiles of the 10 novels of Shakespeare and Twain for the pair of motifs $(H_{00}, F_{00})$ (left), $(H_{01},F_{01})$ (middle), and $(H_{11},F_{11})$ (right). Most texts of the same author fall into the same cluster.  
		}
		\label{fig:WAN_dendro}
	\end{figure*}
	
	For instance, consider the middle plot in Figure \ref{fig:WAN_dist_mx}. 
	The plot suggests that Jane Austen and William Shakespeare have small $L^{1}$-distance with respect to their CHD profiles $\mathtt{f}(F_{0,1},\G\,|\, F_{0,1})$. From the full list of CHD profiles given in Figure \ref{fig:WAN_profile_01}, we can see `why' this is so: while their CHD profiles drop to zero quickly around filtration level 0.5, all the other authors have more persisting CHD profiles. 
	
	Next, note that if we restrict the three distance matrices to the last two authors (Twain and Shakespeare), then the resulting $2\times 2$ matrices have small diagonal entries indicating that the two authors are well-separated by the three profiles. Indeed, in Figure \ref{fig:WAN_dendro}, we plot the single-linkage hierarchical clustering dendrogram for the $L^{1}$-distances across the 10 novels by the two authors. In the first dendrogram in Figure \ref{fig:WAN_dendro}, we see the five novels from each author form perfect clusters according to the $(0,0)$-CDH profile. For the other two dendrograms, we observe near-perfect clustering using $(0,1)$- and $(1,1)$-CHD profiles.

	Lastly, in Table \ref{table:WAN_classificaiton_rate}, we apply our method as a means of authorship attribution and compare its correct classification rate with other baseline methods. We choose five novels for each author as described at the beginning of this section. In this experiment, for each article $\mathcal{A}$, we normalize its frequency matrix $M(\mathcal{A})$ row-wise to make it a Markov transition kernel and then calculate pairwise distances between them by three methods -- $L^{1}$-the distance between $(0,0)$-CHD profiles, the KL-divergence, and the Frobenius distance. This normalization is used in the original article \citep{segarra2015authorship}, and we find that this generally leads to a higher classification rate than the global normalization $M(\mathcal{A})\mapsto M(\mathcal{A})/\max(M(\mathcal{A}))$. For the classification test, we first choose $k\in \{1,2,3,4\}$ known texts and one text with unknown authorship from each author. For each unknown text $X$, we compute its distance from the $5k$ texts of known authorship and attribute $X$ to the author of known texts of the minimum distance. The classification rates after repeating this experiment 1000 times are reported in Table \ref{table:WAN_classificaiton_rate}.

	\begin{table*}[h]
		\centering
		\includegraphics[width=0.95 \linewidth]{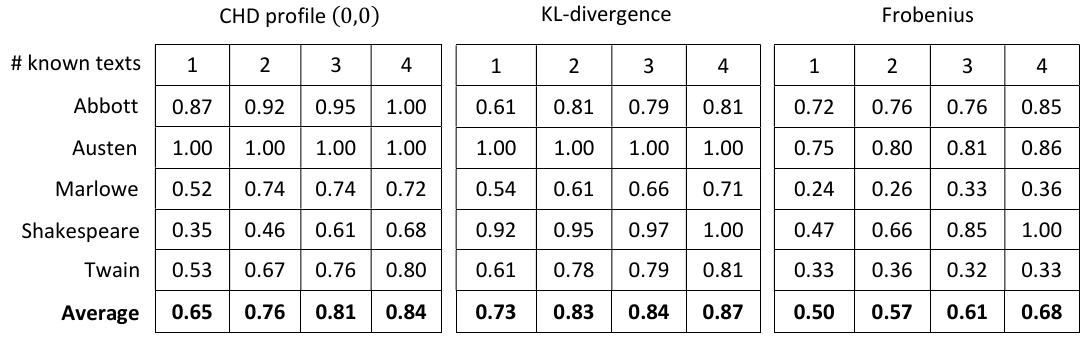}
		\vspace{0.3cm}
		\caption{ Success rate of authorship attribution by using CHD profiles, the KL divergence, and the Frobenius metric for various numbers of known texts per author.  
		}
		\label{table:WAN_classificaiton_rate}
	\end{table*}
	
	The table above summarizes classification rates among the five authors - Abbott, Austen, Marlowe, Shakespeare, and Twain. For four known texts per author, the CHD profile gives 84\% success rate, which outperforms the Frobenius distance (68\%) and shows similar performance as the KL divergence (87\%). It is also interesting to note that different metric shows complementary classification performance for some authors. For instance, for four known tests, Abbott is perfectly classified by the CHD profile, whereas KL-divergence has only \%81 success rate; on the other hand, Shakespeare is perfectly classified by the KL-divergence but only with \%68 accuracies with the CHD profile. We also report the average classification rates for all nine authors: CHD profile $(0,0)$ -- 53\%, KL-divergence -- 77\%, and Frobenius -- 41\%. The $(0,0)$-profile loses the classification score mainly for Aldrich (author index 1), Dickens (author index 5), and Melville (author index 7). Indeed, in Figure \ref{fig:WAN_dist_mx} left, we see the diagonal entries of these authors are not the smallest in the corresponding rows. A WAN is already a compressed mathematical summary of text data, so running an additional MCMC motif sampling algorithm and further compressing it to a profile may lose information that could simply directly be processed. We emphasize that, as we have seen in Section \ref{section:FB} as well as in Subsection \ref{subsection:CHD_WAN}, our method is more suitable for extracting interpretable and low-dimensional information from large networks.

	\section*{Acknowledgements}
	{This work has been partially supported by NSF projects DMS-1723003 and CCF-1740761. HL is partially supported by NSF DMS-2206296 and DMS-2010035. The authors are grateful to Mason Porter for sharing the Facebook100 dataset, and also to Santiago Segarra and Mark Eisen for sharing the Word Adjacency Network dataset and original codes.  }

	\vspace{1cm}

	\newpage
	
	\appendix

	\section{Motif transforms and spectral decomposition}
	\label{section:motif_transform_spectral}
	
	In this section, we compute the motif transform by paths using a certain  spectral decomposition and consider motif transforms in terms of graphons. We denote the  path and cycle motifs by $P_{k}=([k], \mathbf{1}_{\{(1,2),(2,3),\cdots,(k-1,k)\}})$ and $C_{k}=([k], \mathbf{1}_{\{(1,2),\cdots,(k-1,k), (k,1)\}}),$ respectively. 
	
	\subsection{Motif transform by paths} 
	\label{subsection:path_transform}
	For any function $f:[n]\rightarrow [0,1]$, denote by $\text{diag}(f)$ the $(n\times n)$ diagonal matrix whose $(i,i)$ entry is $f(i)$. For a given network $\G=([n],A,\alpha)$, observe that 
	\begin{align}
		\mathtt{t}(P_{k},\G) \alpha(x_{1})^{-1/2} A^{P_{k}}(x_{1},x_{k}) \alpha(x_{k})^{-1/2} &=   \sum_{x_{2},\cdots,x_{k-1}\in [n]} \prod_{\ell=1}^{k-1} \sqrt{\alpha(x_{\ell})}A(x_{\ell},x_{\ell+1})\sqrt{\alpha(x_{\ell+1})} \\
		& = \left[ \left(\text{diag}(\sqrt{\alpha})\, A \, \text{diag}(\sqrt{\alpha})\right)^{k-1} \right]_{x_{1},x_{k}}.
	\end{align}
	If we denote $B=\text{diag}(\sqrt{\alpha})\, A \, \text{diag}(\sqrt{\alpha})$, this yields 
	\begin{align}
		\mathtt{t}(P_{k},\G)  A^{P_{k}} = \text{diag}(\sqrt{\alpha}) B^{k-1} \text{diag}(\sqrt{\alpha}).
	\end{align} 
	
	Since $B$ is a real symmetric matrix, its eigenvectors form an orthonormal basis of $\mathbb{R}^{n}$. Namely, let $\lambda_{1}\ge \lambda_{2}\ge \cdots \ge \lambda_{n}$ be the eigenvalues of $B$ and let $v_{i}$ be the corresponding eigenvector of $\lambda_{i}$. Then $v_{i}$ and $v_{j}$ are orthogonal if $i\ne j$. Furthermore, we may normalize the eigenvectors so that if we let $V$ be the $(n\times n)$ matrix whose $i$th column is $v_{i}$, then $V^{T}V = I_{n}$, the $(n\times n)$ identity matrix. The spectral decomposition for $B$ gives  $B=V\, \text{diag}(\lambda_{1},\cdots,\lambda_{n})\, V^{T}$. Hence  
	\begin{align}
		\mathtt{t}(P_{k},\G) A^{P_{k}}  = \text{diag}(\sqrt{\alpha}) V\, \text{diag}(\lambda_{1}^{k-1},\cdots,\lambda_{n}^{k-1})\, V^{T} \text{diag}(\sqrt{\alpha}),
	\end{align} 
	or equivalently,  
	\begin{equation}\label{eq:path_transform_matrix_formula}
		\mathtt{t}(P_{k},\G)  A^{P_{k}}(i,j)  = \sum_{\ell=1}^{n} \lambda_{\ell}^{k-1} \sqrt{\alpha(i)}v_{\ell}(i)\sqrt{\alpha(j)}v_{\ell}(j),
	\end{equation}
	where $v_{\ell}(i)$ denotes the $i$th coordinate of the eigenvector $v_{\ell}$. Summing the above equation over all $i,j$ gives 
	\begin{equation}\label{eq:hom_density_path_spectral}
		\mathtt{t}(P_{k},\G) = \sum_{\ell=1}^{n} \lambda_{\ell}^{k-1} (\langle \sqrt{\alpha},v_{\ell}  \rangle)^{2},
	\end{equation} 
	where $\langle\cdot,\cdot \rangle$ denotes the inner product between two vectors in $\mathbb{R}^{n}$. Combining the last two equations yields 
	\begin{equation}
		A^{P_{k}}(i,j)  = \frac{\sum_{\ell=1}^{n} \lambda_{\ell}^{k-1} \sqrt{\alpha(i)}v_{\ell}(i)\sqrt{\alpha(j)}v_{\ell}(j)}{\sum_{\ell=1}^{n} \lambda_{\ell}^{k-1} \langle \sqrt{\alpha},v_{\ell}  \rangle^{2}},
	\end{equation}
	
	Now suppose $\G$ is irreducible. Then by Perron-Frobenius theorem for nonnegative irrducible matrices, $\lambda_{1}$ is the eigenvalue of $B$ with maximum modulus whose associated eigenspace is simple, and the components of the corresponding normalized eigenvector $v_{1}$ are all positive. This yields $\langle \sqrt{\alpha},v_{1}  \rangle>0$, and consequently
	\begin{equation}\label{eq:Abar_formula}
		\bar{A}:=\lim_{k\rightarrow \infty} A^{P_{k}}  = \frac{1}{\langle \sqrt{\alpha},v_{1} \rangle^{2}} \text{diag}(\sqrt{\alpha}) v_{1} v_{1}^{T} \text{diag}(\sqrt{\alpha}).
	\end{equation}
	If $\G$ is not irreducible, then the top eigenspace may not be simple and $\lambda_{1}=\cdots=\lambda_{r}>\lambda_{r+1}$ for some $1\le r<n$. By decomposing $A$ into irreducible blocks and applying the previous observation, we have $\langle \sqrt{\alpha}, v_{i}\rangle>0$ for each $1\le i \le r$ and 
	\begin{equation}\label{eq:Abar_formula_2}
		\bar{A}:=\lim_{k\rightarrow \infty} A^{P_{k}}  = \frac{1}{\sum_{i=1}^{r}\langle \sqrt{\alpha},v_{1} \rangle^{2}} \text{diag}(\sqrt{\alpha}) \left( \sum_{i=1}^{r} v_{i}v_{i}^{T} \right) \text{diag}(\sqrt{\alpha}).
	\end{equation}
	We denote $\bar{\G} = ([n],\bar{A},\alpha)$ and call this network as the \textit{transitive closure} of $\G$. 
	
	It is well-known that the Perron vector of an irreducible matrix $A$, which is the normalized eigenvector corresponding to the Perron-Frobenius eigenvalue $\lambda_{1}$ of $A$, varies continuously under small perturbation of $A$, as long as resulting matrix is still irreducible \cite{kato2013perturbation}. It follows that the transitive closure $\bar{\G}$ of an irreducible network $\G$ is stable under small perturbation. However, it is easy to see that this is not the case for reducible networks (see Example \ref{ex:instability_trans_closure}).
	
	\commHL{Below we give an example of motif transform by paths and transitive closure of a three-node network.} 
	
	\begin{ex}[Transitive closure of a three-node network]\label{ex:matrix_spectral}
		\normalfont
		Consider a network $\G=([3],A,\alpha)$, where $\alpha=((1-\epsilon)/2,\epsilon,(1-\epsilon)/2)$ and 
		\begin{equation}
			A = \begin{bmatrix}
				1 & s & 0 \\
				s & 1 & s \\
				0 & s & 1
			\end{bmatrix}.
		\end{equation}
		Then $\G$ is irreducible if and only if $s>0$. Suppose $s>0$. Also, note that 
		\begin{equation}\label{eq:ex_power_expansion1}
			\textup{diag}(\sqrt{\alpha})\, A \, \textup{diag}(\sqrt{\alpha}) = 
			\begin{bmatrix}
				(1-\epsilon)/2 & s\sqrt{(1-\epsilon)\epsilon/2} & 0 \\
				s\sqrt{(1-\epsilon)\epsilon/2}  & \epsilon  & s\sqrt{(1-\epsilon)\epsilon/2}  \\
				0 & s\sqrt{(1-\epsilon)\epsilon/2}   & (1-\epsilon)/2
			\end{bmatrix}.
		\end{equation}
		The eigenvalues of this matrix are
		\begin{eqnarray*}
			\lambda_{0} &=& \frac{1-\epsilon}{2} \\
			\lambda_{-} &=&  \frac{1}{4} \left((\epsilon +1)-\sqrt{(3\epsilon -1)^{2} - 16s^{2}\epsilon(1-\epsilon)}\right) \\
			\lambda_{+} &=& \frac{1}{4} \left((\epsilon +1)+\sqrt{(3\epsilon -1)^{2} - 16s^{2}\epsilon(1-\epsilon)}\right)
		\end{eqnarray*}
		and the corresponding eigenvectors are
		\begin{align}
			v_{0} &= (-1,0,1)^{T} \\
			v_{-} &= \left( 1, \frac{3\epsilon-1 - \sqrt{(3\epsilon-1)^{2}+16s^{2}\epsilon(1-\epsilon)}}{2s\sqrt{2\epsilon(1-\epsilon)}} ,1 \right)^{T}\\
			v_{+} &= \left( 1, \frac{3\epsilon-1 + \sqrt{(3\epsilon-1)^{2}+16s^{2}\epsilon(1-\epsilon)}}{2s\sqrt{2\epsilon(1-\epsilon)}} ,1 \right)^{T}
		\end{align}
		The Perron-Frobenius eigenvector of the matrix in (\ref{eq:ex_power_expansion1}) is $v_{+}$. Then using (\ref{eq:Abar_formula}), we can compute
		\begin{align}
			\bar{A} =
			\begin{bmatrix}
				1/4 & 0 & 1/4 \\
				0 & 0 & 0 \\
				1/4 & 0 & 1/4 
			\end{bmatrix}
			+
			\epsilon
			\begin{bmatrix}
				-s & s& -s\\
				s & 0 & s \\
				-s & s &	-s
			\end{bmatrix}
			+ O(\epsilon^{2}).
		\end{align}
		Hence in the limit as $\epsilon\searrow 0$, the transitive closure of $\G$ consists of two clusters with uniform communication strength of 1/4. However, if we change the order of limits, that is, if we first let $\epsilon\searrow 0$ and then $k\rightarrow \infty$, then the two clusters do not communicate in the limit. Namely, one can compute 
		\begin{align}
			A^{P_{k}} =
			\begin{bmatrix}
				1/2 & 0 & 0 \\
				0 & 0 & 0 \\
				0 & 0 & 1/2 
			\end{bmatrix}
			+
			\epsilon
			\begin{bmatrix}
				-(k-1)s^{2}-2s & s & ks^2\\
				s & 0 & s \\
				ks^2 & s &	-(k-1)s^{2}-2s
			\end{bmatrix}
			+ O(\epsilon^{2}),
		\end{align}
		which is valid for all $k\ge 2$. Hence for any fixed $k\ge 1$, the motif transform of $\G$ by $P_{k}$ gives two non-communicating clusters as $\epsilon\searrow 0$.  $\hfill\blacktriangle$
	\end{ex}

	\subsection{Motif transform of graphons}
	
	Recall the $n$-block graphon $U_{\G}:[0,1]^{2}\rightarrow [0,1]$ associated with network $\G=([n],A,\alpha)$, which is introduced in \eqref{eq:def_block_kernel}. For each graphon $U$ and a simple motif $F=([k],\mathbf{1}(E))$ with $k\ge 2$, define a graphon $U^{F}$ by 
	\begin{equation}
		U^{F}(x_{1},x_{k}) = \frac{1}{\mathtt{t}(F,U)}\int_{[0,1]^{k-2}} \prod_{(i,j)\in E} U( x_{i}, x_{j}) \,dx_{2}\cdots dx_{k-1}.
	\end{equation}
	It is easy to verify that the graphon corresponding to the motif transforms $\G^{F}$ agrees with $(U_{\G})^{F}$. Below we give some examples.

	\begin{ex}[path]
		\textup{ Let $P_{k}$ be the path motif on node set $[k]$. Let $U:[0,1]^{2}\rightarrow[0,1]$ be a graphon. Then 
			\begin{equation}
				\mathtt{t}(P_{k},U) U^{P_{k}}(x_{1},x_{k}) =  \int_{[0,1]^{k-2}} U(x_{1},x_{2})U(x_{2},x_{3})\cdots U(x_{k-1},x_{k})\,dx_{2}\cdots dx_{k-1}.
			\end{equation}
			We denote the graphon on the right-hand side as $U^{\circ (k-1)}$, which is called the $(k-1)$st power of $U$. }$\hfill \blacktriangle$
	\end{ex}

	\begin{ex}[cycle]
		\textup{Let $C_{k}$ be a cycle motif on node set $[k]$. Let $U:[0,1]^{2}\rightarrow[0,1]$ be a graphon. Then 
			\begin{align}
				\mathtt{t}(C_{k},U) U^{C_{k}}(x_{1},x_{k}) &=  U(x_{1},x_{k})\int_{[0,1]^{k-2}} U(x_{1},x_{2})\cdots U(x_{k-1},x_{k})\,dx_{2}\cdots dx_{k-1} \\
				&= U(x_{1},x_{k}) U^{\circ (k-1)}(x_{1},x_{k}).
			\end{align}
		}
		$\hfill \blacktriangle$
	\end{ex}
	
	\commHL{Below, we give an explicit example of motif transforms applied to graphons.}
	
	\begin{ex}\label{ex:graphon_example}
		\textup{Let $\G=([3],A,\alpha)$ be the network in Example \ref{ex:matrix_spectral}. Let $U_{\G}$ be the corresponding graphon. Namely, let $[0,1]=I_{1}\sqcup I_{2}\sqcup I_{3}$ be a partition where $I_{1}=[0,(1 - \eps)/2)$, $I_{2} = [(1-\eps)/2,(1 + \eps)/2)$, and $I_{3} = [(1+\eps)/2, 1]$. Then $U_{\G}$ is the 3-block graphon taking value $A(i,j)$ on rectangle $I_{i}\times I_{j}$ for $1\le i,j\le 3$.  Denoting $U=U_{\G}$, the three graphons $U$, $U^{\circ 2}$, and $U\cdot U^{\circ 2}$ are shown in Figure \ref{fig:motif_transform_ex}.} 
		
		\begin{figure*}[h]
			\centering
			\includegraphics[width=0.95 \linewidth]{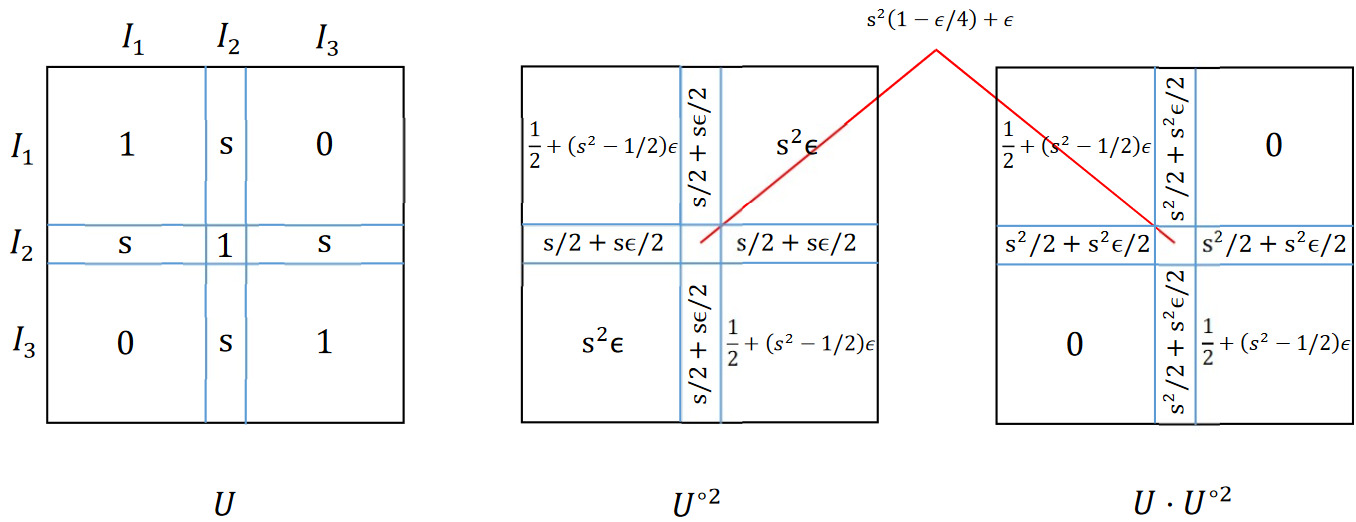}
			\caption{ Graphons $U=U_{\G}$ (left), $U^{\circ 2}$ (middle), and $U\cdot U^{\circ 2 }$ (right). \commHL{These are three-block graphons with the same block structure $I_{i}\times I_{j}$ for $1\le i,j\le 3$ whose values on each block are depicted in the figure. }
			}
			\label{fig:motif_transform_ex}
		\end{figure*}
		
		\noindent \textup{According to the previous examples,  we have 
			\begin{equation}
				U^{P_{3}} = \frac{U^{\circ 2}}{\mathtt{t}(P_{3},U)}, \quad U^{C_{3}}= \frac{U\cdot U^{\circ 2}}{\mathtt{t}(C_{3},U)},
			\end{equation}
			where $\mathtt{t}(P_{3},U)=\lVert U^{\circ 2} \rVert_{1}$ and $\mathtt{t}(P_{3},U)=\lVert U\cdot U^{\circ 2} \rVert_{1}$. See Figure \ref{fig:graphon_dendro} for hierarchical clustering dendrograms of these graphons.} $\hfill \blacktriangle$
	\end{ex}

	\subsection{Spectral decomposition and motif transform by paths}
	
	In this subsection, we assume all kernels and graphons are symmetric. 
	
	A graphon $W:[0,1]^{2}\rightarrow [0,1]$ induces a compact  Hilbert-Schmidt operator $T_{W}$ on $\mathcal{L}^{2}[0,1]$ where 
	\begin{equation}
		T_{W}(f)(x)=\int_{0}^{1}W(x,y)f(y)\,dy.
	\end{equation} 
	$T_W$ has a discrete spectrum, i.e., its spectrum is a countable multiset $\text{Spec}(W)=\{ \lambda_{1}, \lambda_{2},\cdots \}$, where each eigenvalue has finite multiplicity and $|\lambda_{n}|\rightarrow 0$ as $n\rightarrow \infty$. Since $W$ is assumed to be symmetric, all $\lambda_{i}$s are real so we may arrange them so that $\lambda_{1}\ge \lambda_{2}\ge \cdots$. Via a spectral decomposition, we may write  
	\begin{equation}\label{eq:spectral_decomp}
		W(x,y) = \sum_{j=1}^{\infty} \lambda_{j} f_{j}(x) f_{j}(y),
	\end{equation}
	where $\int_{0}^{1}f_{i}(x)f_{j}(x)\,dx = \mathbf{1}(i=j)$, that is, $f_{j}$ is an eigenfunction associated to $\lambda_{j}$ and they form an orthonormal basis for $\mathcal{L}^{2}[0,1]$. 	
	
	Let $P_{k}$ be the path on the node set $[k]$. Let $U$ be a graphon with eigenvalues $\lambda_{1}\ge \lambda_{2}\ge \cdots $. Orthogonality of the eigenfunctions easily yields 
	\begin{equation}\label{path_transform_graphon_formula}
		U^{P_{k}}(x,y) = \frac{\sum_{j} \lambda_{j}^{k} f_{j}(x) f_{j}(y)}{\sum_{j} \lambda_{j}^{k} \left( \int f_{j}(x_{1}) \,dx_{1} \right)^{2}},
	\end{equation}
	Further, suppose the top eigenvalue of $U$ has multiplicity $r\ge 1$. Then
	\begin{equation}\label{eq:path_transform_graphon_closure}
		\bar{U}(x,y)=\lim_{k\rightarrow \infty} U^{P_{k}} (x,y) = \frac{\sum_{j=1}^{r}  f_{j}(x) f_{j}(y)}{\sum_{j=1}^{r}  \left( \int f_{j}(x_{1}) \,dx_{1} \right)^{2}}. 
	\end{equation}
	Note that (\ref{path_transform_graphon_formula}) and (\ref{eq:path_transform_graphon_closure}) are the graphon analogues of formulas (\ref{eq:path_transform_matrix_formula}) and (\ref{eq:Abar_formula_2}).
	
	The network and graphon versions of these formulas are compatible through the following simple observation.

	\begin{prop}
		Let $\G=([n],A,\alpha)$ be a network such that $A$ is symmetric, and let $U=U_{\G}$ be the corresponding graphon \commHL{(see \eqref{eq:def_block_kernel})}. Let $\lambda\in \mathbb{R}$ and $v=(v_{1},\cdots,v_{n})^{T}\in \mathbb{R}^{n}$ be a pair of an eigenvalue and its associate eigenvector of the matrix $B=\text{diag}(\sqrt{\alpha})\, A \, \text{diag}(\sqrt{\alpha})$. Then the following function $f_{v}:[0,1]\rightarrow \mathbb{R}$ 
		\begin{equation}
			f_{v}(x) = \sum_{i=1}^{n} \frac{v_{i}}{\sqrt{\alpha(i)}} \mathbf{1}(x\in I_{i})
		\end{equation}
		is an eigenfunction of the integral operator $T_{U}$ associated to the eigenvalue $\lambda$. Conversely, every eigenfunction of $T_{U}$ is given this way. 
	\end{prop}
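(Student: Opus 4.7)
The plan is to verify the forward direction by direct computation using the block structure of $U=U_{\G}$, and to derive the converse from the observation that $T_{U}f$ is automatically constant on each block $I_{a}$ for every $f\in\mathcal{L}^{2}[0,1]$.

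For the forward direction, fix $a\in[n]$ and $x\in I_{a}$ and expand
\begin{align*}
T_{U}(f_{v})(x) \;=\; \int_{0}^{1}U(x,y)f_{v}(y)\,dy \;=\; \sum_{j=1}^{n} A(a,j)\int_{I_{j}}\frac{v_{j}}{\sqrt{\alpha(j)}}\,dy \;=\; \sum_{j=1}^{n} A(a,j)\sqrt{\alpha(j)}\,v_{j},
\end{align*}
using $\mu(I_{j})=\alpha(j)$. The eigenvalue relation $Bv=\lambda v$ reads $\sum_{j}\sqrt{\alpha(a)}\,A(a,j)\sqrt{\alpha(j)}\,v_{j}=\lambda v_{a}$; dividing by $\sqrt{\alpha(a)}$ identifies the last display with $\lambda v_{a}/\sqrt{\alpha(a)}=\lambda f_{v}(x)$. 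Hence $T_{U}f_{v}=\lambda f_{v}$ pointwise.

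For the converse, note that $T_{U}f(x)$ depends on $x$ only through the index $a$ with $x\in I_{a}$, so $T_{U}f$ is block-constant for every $f\in\mathcal{L}^{2}[0,1]$. If $T_{U}f=\lambda f$ with $\lambda\neq 0$, then $f=\lambda^{-1}T_{U}f$ is itself block-constant; write $f\equiv c_{a}$ on $I_{a}$ and set $v_{a}:=c_{a}\sqrt{\alpha(a)}$. Restricting the eigenequation to $I_{a}$ gives $\sum_{j}A(a,j)c_{j}\alpha(j)=\lambda c_{a}$, and multiplying both sides by $\sqrt{\alpha(a)}$ produces $(Bv)_{a}=\lambda v_{a}$. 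Thus $v\in\mathbb{R}^{n}$ is a $\lambda$-eigenvector of $B$ and $f=f_{v}$ by construction.

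The only delicate point is the case $\lambda=0$: the nullspace of $T_{U}$ contains every $f$ with $\int_{I_{j}}f(y)\,dy=0$ for all $j$, which is strictly larger than the block-constant subspace. Hence the converse as stated is literally correct for nonzero eigenvalues, and one should read it modulo the kernel of $T_{U}$ when $\lambda=0$. This caveat is cosmetic for the intended use in \eqref{path_transform_graphon_formula} and \eqref{eq:path_transform_graphon_closure}, where only nonzero eigenvalues contribute to the relevant sums.
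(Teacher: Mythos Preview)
Your argument is correct and follows the same route as the paper's (commented-out) proof: reduce to block-constant functions, compute $T_{U}f$ on each block, and identify the resulting linear system with $Bv=\lambda v$ via the substitution $v_{a}=\sqrt{\alpha(a)}\,c_{a}$. Your treatment is in fact slightly more careful: the paper asserts without qualification that every eigenfunction of $T_{U}$ is block-constant, whereas you correctly isolate the $\lambda=0$ case where the kernel of $T_{U}$ genuinely contains non-block-constant functions, and you note this is harmless for the spectral formulas that follow.
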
      
	
	\commHL{
		\begin{proof}
			First observe that any eigenfunction $f$ of $T_{U}$ must be constant over each interval $I_{i}$. Hence we may write $f=\sum a_{i}\mathbf{1}(I_{i})$ for some $a_{i}\in \mathbb{R}$. Then for each $x\in [0,1]$,
			\begin{eqnarray}
				T_{U}(f_{v})(x) &=& \int_{0}^{1} U(x,y)f(y) \,dy \\  
				&=& \int_{0}^{1} \sum_{i,j,k} A(i,j) \mathbf{1}(x\in I_{i}) \mathbf{1}(y\in I_{j}) \, a_{k}\,\mathbf{1}(y\in I_{k})  \,dy \\  
				&=& \sum_{i} \mathbf{1}(x\in I_{i}) \sum_{j}   A(i,j) \,\alpha(j) \, a_{j}.
			\end{eqnarray}
			Hence $f$ is an eigenfunction of $T_{U}$ with eigenvalue $\lambda$ if and only if 
			\begin{equation}\label{eq:evec_graph_graphon_1}
				\sum_{j=1}^{n}  A(i,j) \alpha(j) a_{j} = \lambda a_{i} \quad \text{$\forall 1\le i \le n$},
			\end{equation}
			which is equivalent to saying that $u:=(a_{1},\cdots,a_{n})^{T}$ is an eigenvector of the matrix $A\,\text{diag}(\alpha)$ with eigenvalue $\lambda$. Further note that $A\,\text{diag}(\alpha)u = \lambda u $ is equivalent to 
			\begin{equation}
				B\, \text{diag}(\sqrt{\alpha})u = \lambda \text{diag}(\sqrt{\alpha})u.
			\end{equation}
			Writing $v_{i}:=a_{i}\sqrt{\alpha(i)}$, then shows the assertion. 
		\end{proof}
	}
	
	\commHL{
		\begin{rmkk}
			\normalfont
			\cite[Lem. 2]{ruiz2021graphon} states a similar observation for associating eigenvalue/eigenvector pairs of a network with that of the associated graphon. While our statement holds for general probability distribution $\alpha$ on the node set $[n]$, in the reference, the uniform probability distribution $\alpha\equiv 1/n$ is assumed. In this special case, \eqref{eq:evec_graph_graphon_1} reduces to 
			\begin{align}
				A u = \frac{\lambda}{n} u,
			\end{align}
			as stated in \cite[Lem. 2]{ruiz2021graphon}. 
		\end{rmkk}
	}
	
	When a graphon $U$ is not irreducible, its top eigenspace is not simple and its dimension can change under an arbitrarily small perturbation. Hence formula (\ref{eq:path_transform_graphon_closure}) suggests that the operation of transitive closure $U\rightarrow \bar{U}$ is not stable under any norm. The following example illustrates this. 
	
	\begin{ex}[Instability of transitive closure]
		\label{ex:instability_trans_closure}
		\textup{Let $f_{1}=\mathbf{1}([0,1])$ and choose a function $f_{2}:[0,1]\rightarrow \{-1,1\}$ so that $\int_{0}^{1}f_{2}(x)\,dx=0$. Then $\lVert f_{2} \rVert_{2}=1$ and $\langle f_{1},f_{2}  \rangle=0$. Now fix $\epsilon>0$, and define two graphons $U$ and $U_{\epsilon}$ through their spectral decompositions 
			\begin{equation}
				U = f_{1}\otimes f_{1}+f_{2}\otimes f_{2} \quad \text{and}\quad 
				U_{\epsilon} = f_{1}\otimes f_{1}+(1-\epsilon)f_{2}\otimes f_{2},
			\end{equation}
			where $(f_{i}\otimes f_{j})(x,y) = f_{i}(x)f_{j}(y)$. Then by (\ref{eq:path_transform_graphon_closure}), we get $\bar{U} = U$ and $\bar{U}_{\epsilon} = f_{1}\otimes f_{1}$. This yields 
			\begin{equation}
				\eps(\bar{U}-\bar{U}_{\epsilon}) = \eps f_{2}\otimes f_{2} = U-U_{\epsilon}. 
			\end{equation}
		}
		$\hfill\blacktriangle$
	\end{ex}

	\vspace{0.3cm}
	
	\section{Proof of convergence and mixing time bounds of the Glauber and pivot chains}
	\label{section:proofs_mixing}
	In this section, we establish convergence and mixing properties of the Glauber and pivot chains of homomorphisms $F\rightarrow \G$ by proving Theorems \ref{thm:stationary_measure_Glauber}, \ref{thm:gen_coloring_mixing_tree}, \ref{thm:stationary_measure_pivot}, \ref{thm:pivot_chain_mixing}, and Corollary \ref{cor:time_avg_observables}.

	\vspace{0.2cm}
	\subsection{Convergence and mixing of the pivot chain}
	
	Let $(\x_{t})_{t\ge 0}$ be a pivot chain of homomorphisms $F\rightarrow \G$. We first show that the pivot chain converges to the desired distribution $\pi_{F\rightarrow \G}$ over $[n]^{[k]}$, defined in (\ref{eq:def_embedding_F_N}). Recall the $\alpha$ is the unique stationary distribution of the simple random walk on $\G$ with the modified kernel \eqref{eq:MH_RW_kernel_G}. In this subsection, we write a rooted tree motif $F=([k],A_{F})$ as $([k],E_{F})$, where $E_{F}=\{(i,j)\in [k]^{2}\,|\, A_{F}(i,j)=1\}$. 
	
	\vspace{0.2cm}
	\begin{proof}{\textbf{of Theorem \ref{thm:stationary_measure_pivot} }.}
		Since the network $\G$ is irreducible and finite, the random walk $(\x_{t}(1))_{t\ge 0}$ of pivot on $\G$ with kernel $P$ defined at \eqref{eq:MH_RW_kernel_G} is also irreducible. It follows that the pivot chain is irreducible with a unique stationary distribution, say, $\pi$. We show $\pi$ is in fact the desired measure $\pi_{F\rightarrow \G}$. First, recall that $\x_{t}(1)$ is a simple random walk on the network $\G$ modified by the Metropolis-Hastings algorithm so that it has the following marginal distribution as its unique stationary distribution: (see, e.g., \cite[Sec. 3.2]{levin2017markov})
		\begin{equation}
			\pi^{(1)}(x_{1}) = \frac{\sum_{x_{2},\cdots,x_{k}\in [n]} \prod_{(i,j)\in E_{F}}A(x_{i},x_{j}) \alpha(x_{1})\alpha(x_{2})\cdots \alpha(x_{k}) }{\mathtt{t}(F,\G)}
		\end{equation} 
		Second, we decompose $\x_{t}$ into return times of the pivot $\x_{t}(1)$ to a fixed node $x_{1}\in [n]$ in $\G$. Namely, let $\tau(\ell)$ be the $\ell$th return time of $\x_{t}(1)$ to $x_{1}$. Then by independence of sampling $\x_{t}$ over $\{2,\cdots,k \}$ for each $t$, the strong law of large numbers yields  
		\begin{align}
			&\lim_{M\rightarrow \infty}\frac{1}{M} \sum_{\ell=1}^{M}\mathbf{1}(\x_{\tau(\ell)}(2)=x_{2},\, \cdots, \x_{\tau(\ell)}(k)=x_{k})  \\
			&\qquad \qquad \qquad =  \frac{\prod_{\{i,j \}\in E_{F}} A(x_{i},x_{j}) \alpha(x_{2})\cdots \alpha(x_{k}) }{\sum_{ x_{2},\cdots,x_{k}\in [n]} \prod_{\{i,j \}\in E_{F}} A(x_{i},x_{j}) \alpha(x_{2})\cdots \alpha(x_{k}) }.
		\end{align} 
		
		Now, for each fixed homomorphism $\x:F\rightarrow \G$, $i\mapsto x_{i}$, we use the Markov chain ergodic theorem and previous estimates to write 
		\begin{align}
			\pi(\x) &= \lim_{N\rightarrow \infty} \frac{1}{N} \sum_{t=0}^{N} \mathbf{1}(\x_{t}=\x) \\
			& =  \lim_{N\rightarrow \infty} \frac{\sum_{t=0}^{N}   \mathbf{1}(\x_{t}=\x) }{\sum_{t=0}^{N} \mathbf{1}(\x_{t}(1)=x_{1})}   \frac{\sum_{t=0}^{N} \mathbf{1}(\x_{t}(1)=x_{1})}{N}\\
			&=\frac{\prod_{\{i,j \}\in E_{F}} A(x_{i},x_{j}) \alpha(x_{2})\cdots \alpha(x_{k}) }{\sum_{1\le x_{2},\cdots,x_{k}\le n} \prod_{\{i,j \}\in E_{F}} A(x_{i},x_{j}) \alpha(x_{2})\cdots \alpha(x_{k}) } \pi^{(1)}(x_{1}) \\
			&= \frac{\prod_{\{i,j \}\in E_{F}} A(x_{i},x_{j}) \alpha(x_{1})\alpha(x_{2})\cdots \alpha(x_{k})}{\mathtt{t}(F,\G)} = \pi_{F\rightarrow \G}(\x).
		\end{align} 
		This shows the assertion.
	\end{proof}
	
	Next, we bound the mixing time of the pivot chain. Our argument is based on the well-known bounds on the mixing time and meeting time of random walks on graphs.
	
	\vspace{0.2cm}
	\begin{proof}{\textbf{of Theorem \ref{thm:pivot_chain_mixing}}.}
		Fix a rooted tree motif $F=([k],E_{F})$ and a network $\G=([n],A,\alpha)$. Let $P$ denote the transition kernel of the random walk of pivot on $\G$ given at \eqref{eq:MH_RW_kernel_G}. Note that (ii) follows immediately from the equality in (i) and known bounds on mixing times of random walks (see, e.g., \cite[Thm 12.3 and 12.4]{levin2017markov}).   
		
		Now we show (i). The entire pivot chain and the random walk of the pivot have the same mixing time after each move of the pivot, since the pivot converges to the correct marginal distribution $\pi^{(1)}$ induced from the joint distribution $\pi_{F\rightarrow \G}$, and we always sample the non-pivot nodes from the correct distribution conditioned on the location of the pivot. To make this idea more precise, let $\y:[k]\rightarrow [n]$ be an arbitrary homomorphism $F\rightarrow \G$ and let $(\x_{t})_{t\ge 0}$ denote the pivot chain $F\rightarrow \G$ with $\x_{0}=\y$. Write $\pi = \pi_{F\rightarrow \G}$ and $\pi_{t}$ for the distribution of $\x_{t}$. Let $\pi^{(1)}$ denote the unique stationary distribution of the pivot $(\x_{t}(1))_{t\ge 0}$. Let $\x:F\rightarrow \G$ be a homomorphism and write $\x(1)=x_{1}$. Then for any $t\ge 0$, note that  
		\begin{align}
			\P(\x_{t} = \x\,|\, \x_{t}(1)=x_{1}) & = \frac{\left( \prod_{(i,j)\in E_{F}}  A(x_{i},x_{j}) \right)\, \alpha(x_{2})\cdots \alpha(x_{k})  }{\sum_{x_{2},\cdots,x_{k}\in [n]}\left( \prod_{(i,j)\in E_{F}}  A(x_{i},x_{j}) \right)\, \alpha(x_{2})\cdots \alpha(x_{k})  }  \\
			& = \P_{\pi}(\x_{t} = \x\,|\, \x_{t}(1)=x_{1}).
		\end{align}
		Hence we have 
		\begin{align}
			|\pi_{t}(\x) - \pi(\x)| = |\P(\x_{t}(1)=x_{1}) -  \pi^{(1)}(x_{1})|  \cdot \P(\x_{t} = \x\,|\, \x_{t}(1)=x_{1}).
		\end{align}
		Thus summing the above equation over all homomorphisms $\x:F\rightarrow \G$, we get
		\begin{align}\label{eq:pivot_mixing_pf_2}
			\lVert \pi_{t} - \pi \rVert_{\text{TV}}  &= \frac{1}{2} \sum_{\x:[k]\rightarrow [n]} |\pi_{t}(\x) - \pi(\x)| \\
			&= \frac{1}{2} \sum_{x_{1}\in [n]}  |\P(\x_{t}(1)=x_{1}) -  \pi^{(1)}(x_{1})| \\
			&=   \lVert P^{t}(\y(1), \cdot) -  \pi^{(1)}(x_{1}) \rVert_{\textup{TV}}. 
		\end{align}
		This shows (i).
		
		To show (iii), let $(X_{t})_{t\ge 0}$ and $(Y_{t})_{t\ge 0}$ be two independent random walks on $\G$ with kernel $P$, where at each time $t$ we choose one of them independently with equal probability to move. Let $t_{M}$ be the first time that these two chains meet, and let $\tau_{M}$ be their worst-case expected meeting time, that is, 
		\begin{align}
			\tau_{M} = \max_{x_{0},y_{0}\in [n]} \mathbb{E}[t_{M}\,|\, X_{0}=x_{0}, Y_{0}=y_{0} ].
		\end{align}
		Then by a standard coupling argument and Markov's inequality, we have 
		\begin{align}\label{eq:pivot_mixing_pf_3}
			\lVert P^{t}(x,\cdot) - \alpha \rVert_{\text{TV}}  \le  \mathbb{P}(X_{t}\ne Y_{t}) =  \P(t_{M}>t) \le \frac{\tau_{M}}{t}.
		\end{align}
		By imposing the last expression to be bounded by $1/4$, this yields $t_{mix}(1/4) \le 4\tau_{M}$. Hence we get 
		\begin{align}
			t_{mix}(\eps)\le 4\tau_{M}\log_{2}(\eps^{-1}).
		\end{align}
		Now under the hypothesis in (iii), there is a universal cubic upper bound on the meeting time $\tau_{M}$ due to Coppersmith, Tetali, and Winkler \cite[Thm. 3]{coppersmith1993collisions}. This shows (iii).
	\end{proof}
	
	Lastly in this subsection,  we prove Corollary \ref{cor:time_avg_observables} for the pivot chain. The assertion for the Glauber chain follows similarly from Theorem \ref{thm:stationary_measure_Glauber}, which will be proved in Subsection \ref{subsection:Glauber_chain_mixing}.
	
	\vspace{0.2cm}
	\begin{proof}{\textbf{of Corollary \ref{cor:time_avg_observables}}.}
		Let $F=([k],E_{F})$ be a directed tree motif and $\G=([n],A,\alpha)$ be an irreducible network. Let $(\x_{t})_{t\ge 0}$ be a pivot chain of homomorphisms $F\rightarrow \G$ and let $\pi:=\pi_{F\rightarrow \G}$ be its unique stationary distribution. To show \eqref{eq:profile_ergodic}, note that 
		\begin{align}
			&\lim_{N\rightarrow \infty} \frac{1}{N} \sum_{t=1}^{N}  \prod_{1\le i,j\le k} \mathbf{1}(A(\x_{t}(i),\x_{t}(j))^{A_{H}(i,j)}\ge t)\\ 
			&\qquad =\EE_{\pi}\left[ \prod_{1\le i,j\le k} \mathbf{1}(A(\x_{t}(i),\x_{t}(j))^{A_{H}(i,j)}\ge t) \right] \\
			&\qquad =\P_{F\rightarrow \G}\left( \min_{1\le i,j\le k} A(\x(i),\x(j))^{A_{H}(i,j)}\ge t  \right), 
		\end{align} 
		where the first equality is due to Theorem \ref{thm:observable_time_avg}. In order to show \eqref{eq:cond_hom_ergodic}, note that 
		\begin{align}
			&\lim_{N\rightarrow \infty} \frac{1}{N} \sum_{t=1}^{N}  \prod_{1\le i,j\le k} A(\x_{t}(i),\x_{t}(j))^{A_{H}(i,j)}\\ 
			&\qquad  = \EE_{\pi}\left[ \prod_{1\le i,j\le k} A(\x(i),\x(j))^{A_{H}(i,j)} \right]\\
			& \qquad = \sum_{\x:[k]\rightarrow [n]} \left( \prod_{1\le i,j\le k} A(\x(i),\x(j))^{A_{H}(i,j)} \right) \frac{\left[ \prod_{(i,j)\in E_{F}} A(\x(i),\x(j))\right] \alpha(\x(1))\cdots\alpha(\x(k))}{\mathtt{t}(F,\G)} \\
			& \qquad = \sum_{\x:[k]\rightarrow [n]} \left( \prod_{1\le i,j\le k} A(\x(i),\x(j))^{A_{H}(i,j)+A_{F}(i,j)} \right) \frac{\alpha(\x(1))\cdots\alpha(\x(k))}{\mathtt{t}(F,\G)} \\
			&\qquad =\frac{\mathtt{t}(H,\G)}{\mathtt{t}(F,\G)} = \mathtt{t}(F+H,\G\,|\, F).
		\end{align}
		
		
		For the last equation \eqref{eq:transform_ergodic}, we fix $x_{1},x_{k}\in [n]$. By definition, we have 
		\begin{align}\label{eq:pf_cor_time_avg_motif}
			A^{H}(x_{1},x_{k})  = \EE_{\pi_{H\rightarrow \G}}\left[  \mathbf{1}(\x(1)=x_{1},\, \x(k)=x_{k}) \right].
		\end{align}
		By similar computation as above, we can write 
		\begin{align}
			&\lim_{N\rightarrow \infty} \frac{1}{N} \sum_{t=1}^{N} \left( \prod_{1\le i,j\le k}  A(\x_{t}(i),\x_{t}(j))^{A_{H}(i,j)}\right)\mathbf{1}(\x_{t}(1)=x_{1},\, \x_{t}(k)=x_{k})\\
			&\qquad = \EE_{\pi}\left[ \left( \prod_{1\le i,j\le k}  A(\x_{t}(i),\x_{t}(j))^{A_{H}(i,j)}\right) \mathbf{1}(\x(1)=x_{1},\, \x(k)=x_{k}) \right] \\
			&\qquad = \mathtt{t}(F+H,\G\,|\, F) \,\EE_{\pi_{H\rightarrow \G}}\left[  \mathbf{1}(\x(1)=x_{1},\, \x(k)=x_{k}) \right].
		\end{align}
		Hence the assertion follows from \eqref{eq:pf_cor_time_avg_motif}. 
	\end{proof}

	\subsection{Concentration of the pivot chain and rate of convergence}

	\begin{proof}{\textbf{of Theorem \ref{thm:McDiarmids}}.}
		This is a direct consequence of McDirmid's inequality for Markov chains \cite[Cor. 2.11]{paulin2015concentration} and the first equality in Theorem \ref{thm:pivot_chain_mixing} (i).
	\end{proof}

	Next, we prove Theorem \ref{thm:vector_concentration}. An essential step is given by the following lemma, which is due to \cite{hayes2005large} and  \cite{kallenberg1991some}. Let $\mathcal{H}$ be Hilbert space, and let $(X_{t})_{t\ge 0}$ be a sequence of $\mathcal{H}$-valued random `vectors'. We say it is a \textit{very-weak martingale} if $X_{0} = \mathbf{0}$ and
	\begin{align}
		\EE[X_{t+1}\,|\, X_{t}] = X_{t} \qquad \forall t\ge 0.
	\end{align}
	
	\begin{lemma}[Thm. 1.8 in \citep{hayes2005large}]\label{lemma:very_weak_martingale_concentration}
		Let $(X_{t})_{t\ge 0}$ be a very-weak martingale taking values in a Hilbert space $\mathcal{H}$ and $\lVert X_{t+1} - X_{t}  \rVert \le 1$ for all $t\ge 0$. Then for any $a>0$ and $t\ge 0$, 
		\begin{align}
			\P(\lVert X_{t} \rVert \ge  a) \le 2e^{2} \exp\left( \frac{-a^{2}}{2t} \right).
		\end{align}
	\end{lemma}
	
	\begin{proof}
		The original statement \cite[Thm. 1.8]{hayes2005large} is asserted for a Euclidean space $\EE$ in place of the Hilbert space $\mathcal{H}$. The key argument is given by a discrete-time version of a Theorem of \citet[Thm. 3.1]{kallenberg1991some}, which is proved by Hayes in \cite[Prop. 1.5]{hayes2005large} for Euclidean space. The gist of the argument is that given a very-weak martingale $(X_{t})_{t\ge 0}$ in a Euclidean space with norm $\lVert \cdot \rVert$, we can construct a very-weak martingale $(Y_{t})_{t\ge 0}$ in $R^{2}$ in such a way that 
		\begin{align}
			(\Vert X_{t} \Vert, \Vert X_{t+1} \Vert, \Vert X_{t+1} - X_{t} \Vert ) = (\Vert Y_{t} \Vert_{2}, \Vert Y_{t+1} \Vert_{2}, \Vert Y_{t+1} - Y_{t} \Vert_{2} ).
		\end{align}
		By examining the proof of \citet[Prop. 1.5]{hayes2005large}, one finds that the existence of such a 2-dimensional `local martingale' is guaranteed by an inner product structure and completeness with respect to the induced norm of the underlying space. Hence the same conclusion holds for Hilbert spaces. 
	\end{proof}

	\begin{proof}{\textbf{of Theorem \ref{thm:vector_concentration}}.}
		We use a similar coupling idea that is used in the proof of \citet[Thm. 12.19]{levin2017markov}. Recall that $t_{mix}\equiv t_{mix}^{(1)}$ by Theorem \ref{thm:pivot_chain_mixing} (i). Fix an integer $r\ge t_{mix}^{(1)}(\eps)=t_{mix}(\eps)$. Let $\Omega = [n]^{[k]}$ and fix a homomorphism $x:F\rightarrow \G$ for the initial state of the pivot chain $(\x_{t})_{t\ge 0}$. Let $\pi_{t}$ denote the law of $\x_{t}$ and let $\pi:=\pi_{F\rightarrow \G}$. Let $\mu_{r}$ be the optimal coupling between $\pi_{t}$ and $\pi$, so that 
		\begin{align}
			\sum_{\x\ne \y} \mu_{r}(\x,\y) = \lVert \pi_{t} - \pi \rVert_{TV}.
		\end{align}
		We define a pair $(\y_{t}, \z_{t})$ of pivot chains such that 1) The law of $(\y_{0},\z_{0})$ is $\mu_{r}$ and 2) individually $(\y_{t})_{t\ge 0}$ and $(\z_{t})_{t\ge 0}$ are pivot chains $F\rightarrow \G$, and 3) once these two chains meet, they evolve in unison. Note that $(\y_{t})_{t\ge 0}$ has the same law as $(\x_{r+t})_{t\ge 0}$. Also note that by the choice of $r$ and $\mu_{r}$,   
		\begin{align}\label{eq:pf_vec_concentration_1}
			\P(\y_{0}\ne \z_{0}) = \lVert \pi_{t} - \pi \rVert_{TV} \le \eps.
		\end{align}
		
		Now let $\mathcal{H}$ be a Hilbert space and let $g:\Omega\rightarrow \mathcal{H}$ be any function. By subtracting $\EE_{\pi}(g(\x))$ from $g$, we may assume $\EE_{\pi}(g(\x)) = 0$. Then by conditioning on whether $\y_{0}=\z_{0}$ or not, we have 
		\begin{align}
			\mathbb{P}\left( \left\Vert \sum_{t=1}^{N} g(\x_{r+t})  \right\Vert \ge N\delta \right) &= \mathbb{P}\left( \left\Vert \sum_{t=1}^{N} g(\y_{t})  \right\Vert \ge N\delta \right) \\
			&\le \mathbb{P}\left( \left\Vert \sum_{t=1}^{N} g(\z_{t})  \right\Vert \ge N\delta \right) + \P(\y_{0}\ne \z_{0}).
		\end{align}
		The last term is at most $\eps$ by \eqref{eq:pf_vec_concentration_1}, and we can apply Lemma \ref{lemma:very_weak_martingale_concentration} for the first term. This gives the assertion. 
	\end{proof}

	\subsection{Convergence and mixing of the Glauber chain}
	\label{subsection:Glauber_chain_mixing}

	In this subsection, we consider convergence and mixing of the Glauber chain $(\x_{t})_{t\ge 0}$ of homomorphisms $F\rightarrow \G$. We first investigate under what conditions the Glauber chain is irreducible. 
	
	For two homomorphisms $\x,\x':F\rightarrow \G$, denote $\x\sim \x'$ if they differ by at most one coordinate. Define a graph $\mathcal{S}(F,\G)=(\mathcal{V},\mathcal{E})$ where $\mathcal{V}$ is the set of all graph homomorphisms $F\rightarrow \G$ and $\{ \x,\x' \}\in \mathcal{E}$ if and only if $\x\sim \x'$. We say $\x'$ is \textit{reachable from $\x$ in $r$ steps} if there exists a walk between $\x'$ and $\x$ of length $r$ in $\mathcal{S}(F,\G)$. Lastly, denote the shortest path distance on $\mathcal{S}(F,\G)$ by $d_{F,\G}$. Then $d_{F, \G}(\x,\x')=r$ if $\x'$ is reachable from $\x$ in $r$ steps and $r$ is as small as possible. It is not hard to see that the Glauber chain $(\x_{t})_{t\ge 0}$ is irreducible if and only if $\mathcal{S}(F,\G)$ is connected. In the following proposition, we show that this is the case when $F$ is a tree motif and $\G$ contains an odd cycle.

	\begin{prop}\label{prop:state_space_diam_tree}
		Suppose $F=([k],A_{F})$ is a tree motif and $\G=([n],A,\alpha)$ is irreducible and bidirectional network. Further, assume that the skeleton of $\G$ contains an odd cycle. Then $\mathcal{S}(F,\G)$ is connected and 
		\begin{equation}
			\diam(\mathcal{S}(F,\G)) \le 2k\diam(\G) + 4(k-1).
		\end{equation} 
	\end{prop}

	\begin{proof}	
		We may assume $\mathtt{t}(F,\G)>0$ since otherwise $\mathcal{S}(F,\G)$ is empty and hence is connected. If $k=1$, then each Glauber update is to sample the location of $1$ uniformly at random from $[n]$, so the assertion holds. We may assume $k\ge 2$. 
		
		We first give a sketch of the proof of connectedness of $\mathcal{S}(F,\G)$. Since $\G$ is bidirectional, we can fold the embedding $\x:F\rightarrow \G$ until we obtain a copy of $K_{2}$ (complete graph with two nodes) that is still a valid embedding $F\rightarrow \G$. One can also `contract' the embedding $\x'$ in a similar way. By using irreducibility, then one can walk these copies of $K_{2}$ in $\G$ until they completely overlap. Each of these moves occurs with positive probability since $\G$ is bidirectional, and the issue of parity in matching the two copies of $K_{2}$ can be handled by `going around' the odd cycle in $\G$. 
		
		Below we give a more careful argument for the above sketch. Fix two homomorphisms $\x,\x':F\rightarrow \G$. It suffices to show that $\x'$ is reachable from $\x$ in $2k\diam(\G) + 4(k-1)$ steps. Choose a any two nodes $\ell,\ell'\in [k]$ such that $\ell$ is a leaf in $F$ (i.e., $A_{F}(\ell,i)=0$ for all $i\in [k]$) and they have a common neighbor in $F$ (i.e., $A_{F}(i,\ell)>0$ and $A_{F}(i,\ell')+A_{F}(\ell',i)>0$ for some $i\in [k]$). Consider the vertex map $\x^{(1)}:[k]\rightarrow [n]$ defined by $\x^{(1)}(j)=\x(j)$ for $i\ne j$ and $\x^{(1)}(\ell)=\x^{(1)}(\ell')$. Since $\G$ is bidirectional, we see that $\xi^{(1)}$ is a homomorphism $F\rightarrow \G$. Also note that $\x\sim \x^{(1)}$ and $\x^{(1)}$ uses at most $k-1$ distinct values in $[n]$. By repeating a similar operation, we can construct a sequence of homomorphisms $\x^{(1)},\x^{(2)},\cdots, \x^{(k-2)}=:\y^{(1)}$ such that $\y$ uses only two distinct values in $[n]$.

		Next, let $G$ denote the skeleton of $\G$, which is connected since $\G$ is irreducible and bidirectional. Suppose there exists a walk $W=(a_{1},a_{2},\cdots,a_{2m})$ in $G$ for some integer $m\ge 0$ such that $\y(1)=a_{1}$ and $\x'(1)=a_{2m-1}$. We claim that this implies $\x'$ is reachable from $\y^{(1)}$ in $k(m+1)+k-2$ steps.

		To see this, recall that the walk $W$ is chosen in the skeleton $G$ so that at least one of $A(a_{2},a_{3})$ and $A(a_{3},a_{2}))$ is positive. Hence with positive probability, we can move all nodes in $\y^{(1)}[F]$ at location $a_{1}$ in $\G$ to location $a_{3}$, and the resulting vertex map $\y^{(2)}:[k]\rightarrow \{a_{2},a_{3}\}$ is still a homomorphism $F\rightarrow \G$. By a similar argument, we can construct a homomorphism $y^{(3)}:F\rightarrow \G$ such that $\y^{(3)}(1)=a_{3}$ and $y^{(3)}$ maps all nodes of $F$ onto $\{a_{3},a_{4}\}$. Also note that $\y^{(3)}$ is reachable from $y^{(1)}$ in $k$ steps. Hence we can `slide over' $y^{(1)}$ onto the nodes $\{a_{3},a_{4}\}$ in $k$ steps. Repeating this argument, this shows that there is a homomorphism $\y^{(m)}:F\rightarrow \G$  such that $\y^{(m)}$ maps $[k]$ onto $\{a_{2m-1},a_{2m}\}$ and it is reachable from $\y^{(1)}$ in $km$ steps.  
		
		To finish the proof, we first choose a walk $W_{1}=(a_{1},a_{2},\cdots,a_{2m-1})$ in the skeleton $G$ such that $a_{1} = \y^{(1)}(1)$ and $a_{2m-1}=\x'(1)$ for some integer $m\ge 1$. We can always choose such a walk using the odd cycle in $G$, say $C$, and the connectivity of $G$: first walk from $\y^{(1)}(1)$ to the odd cycle $C$, traverse it in one of the two ways, and then walk to $\x'(1)$. Moreover, it is easy to see that this gives $2m-2\le 4\diam(\G)$. Lastly, since $\x'$ is a homomorphism $F\rightarrow \G$ with $\x'(1)=a_{2m-1}$ and since $k\ge 2$, there must exist some node $a_{2m}\in [m]$ such that $A(a_{2m-1},a_{2m})>0$. Since $\G$ is bidirectional, we also have $A(a_{2m},a_{2m-1})>0$, so $a_{2m-1}$ and $a_{2m}$ are adjacent in the skeleton $G$. Hence we can let $W$ be the walk $(a_{1},a_{2},\cdots, a_{2m-1},a_{2m})$. Then $\y'$ is reachable from $\x$ in $k-2$ steps by construction, and $\x'$ is reachable from $\y^{(1)}$ in $k(m+1)+k-2\le 2k(\diam(\G)+1) + k-2$ steps by the claim. Hence $\x'$ is reachable from $\x$ in $2k\diam(\G) + 4(k-1)$ steps, as desired. 
	\end{proof}

	When $F$ is not necessarily a tree, a straightforward generalization of the argument in the Proof of \ref{prop:state_space_diam_tree} shows the following. 
	
	\begin{prop}\label{prop:state_space_diam_gen}
		Let $F$ be any simple motif and $\G$ be an irreducible and bidirectional network. Suppose there exists an integer $r\ge 1$ with  following three conditions: 
		\begin{description}
			\item[(i)] For each $\x\in \G(F,\G)$, there exists $\mathbf{y}\in \G(F,\G)$ such that $\mathbf{y}$ is reachable from $\x$ in $k$ steps and the skeleton of  $\mathbf{y}[F]$ is isomorphic to $K_{r}$. 
			\item[(ii)] $d_{G}(u,v)< r$ implies $\{u,v\}\in E_{G}$.
			\item[(iii)] $G$ contains $K_{r+1}$ as a subgraph. 
		\end{description}
		Then $\mathcal{S}(F,\G)$ is connected and 
		\begin{equation}
			\diam(\mathcal{S}(F,\G))  \le 2k \cdot \diam(\G) + 2(k-r).
		\end{equation} 
	\end{prop}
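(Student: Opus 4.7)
The proposition generalizes Proposition \ref{prop:state_space_diam_tree} by replacing the role of $K_{2}$ (an edge) with a copy of $K_{r}$, so the plan is to mimic the three-stage strategy of the previous proof: \emph{fold} each of $\x$ and $\x'$ into a homomorphism whose image skeleton is $K_{r}$ in $G$, \emph{slide} one such copy through $G$ until it overlaps the other, and resolve the resulting \emph{alignment/parity} obstruction by exploiting the $K_{r+1}$ subgraph guaranteed by condition (iii). Each stage will contribute its portion of the bound $2k\cdot \diam(\G) + 2(k-r)$, and connectedness of $\mathcal{S}(F,\G)$ is an automatic corollary.

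First I would apply condition (i) to $\x$ and independently to $\x'$, producing homomorphisms $\y,\y':F\to\G$ such that $\y[F]$ and $\y'[F]$ both have skeleton $K_{r}$. The natural tight count is $k-r$ Glauber moves per side (a homomorphism with skeleton $K_r$ must use at least $r$ distinct values, and each folding step as in the proof of Proposition \ref{prop:state_space_diam_tree} reduces the number of distinct values by one), which gives the $2(k-r)$ summand; condition (i) is precisely the hypothesis that ensures such a folding actually exists in $\mathcal{S}(F,\G)$.

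Next, given the two $K_{r}$-subgraphs $H=\y[F]$ and $H'=\y'[F]$ of $G$, the sliding stage constructs a sequence $H=H_0,H_1,\ldots,H_m=H'$ of $K_r$'s in $G$ in which consecutive $H_i,H_{i+1}$ share $r-1$ vertices. This sequence is built by walking a distinguished vertex of $H$ along a shortest path in $G$ toward the corresponding vertex of $H'$, which is where condition (ii) is crucial: any vertex at $G$-distance $<r$ from the current ``anchor'' is automatically adjacent to every vertex of the current $K_r$, so appending it and dropping the displaced vertex yields another $K_r$. Each transition $H_i\rightsquigarrow H_{i+1}$ is realized in $\mathcal{S}(F,\G)$ by updating, one Glauber step at a time, every node of $F$ presently mapped to the displaced vertex; since at most $k$ nodes of $F$ are affected per transition and $m\le 2\diam(\G)$ (walking to $H'$ and back if needed for alignment), this contributes at most $2k\cdot \diam(\G)$ moves.

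The step I expect to be the main obstacle is the alignment/parity issue: after sliding, the image $\y[F]$ may coincide with $\y'[F]$ as an unlabeled $K_{r}$ in $G$, while the induced bijection $[k]\to[r]$ differs from that of $\y'$ by a nontrivial permutation of the $r$ labels. For $r=2$ this was handled in Proposition \ref{prop:state_space_diam_tree} by traversing an odd cycle; here the analogous device is condition (iii), which supplies a $K_{r+1}$ in $G$ into which the $K_r$ image can be embedded, and within which one can realize an arbitrary transposition of two labels by a short ``rotation'' (swap one label onto the extra vertex, swap the other into its place, swap the first back). Since the symmetric group on $r$ letters is generated by transpositions, any permutation can be synthesized by repeating this rotation a bounded number of times, at an extra cost absorbed into the $2k\cdot \diam(\G)$ term. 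Combining the three stages yields the stated diameter bound, and in particular the connectedness of $\mathcal{S}(F,\G)$.
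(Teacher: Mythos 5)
The paper does not actually supply a proof here --- it declares the argument a ``straightforward generalization'' of Proposition \ref{prop:state_space_diam_tree} and writes ``Omitted'' --- so the only benchmark is whether your plan really carries the tree-case argument over. Your three-stage skeleton (fold to a $K_{r}$-image using (i), slide the clique through $G$ using (ii), fix the residual mismatch using the $K_{r+1}$ from (iii)) is exactly the intended generalization, and your folding and sliding stages are essentially sound: in particular your observation that condition (ii) makes any neighbor of the anchor adjacent to the whole current $K_{r}$, so the clique can be slid one vertex at a time, is the right replacement for the ``move everything at $a_{1}$ to $a_{3}$'' step of the tree proof.

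The genuine gap is in your alignment stage. You assume that once $\y[F]$ and $\y'[F]$ occupy the same $K_{r}$, the two homomorphisms differ only by a permutation of the $r$ image vertices, and you then realize that permutation by transpositions inside the $K_{r+1}$. But for a general simple motif $F$, two surjective homomorphisms onto the same $K_{r}$ are two $r$-colorings of $F$, and these need not be related by any permutation of the colors (already for $F$ a disjoint union of two edges and $r=2$). What you actually need is that the two colorings are connected under single-site recoloring with the $r+1$ colors available in the $K_{r+1}$ --- this is precisely the $(r+1)$-coloring reconfiguration dynamics of $F$, which is \emph{not} connected in general (frozen colorings exist, e.g.\ the proper $3$-coloring $1,2,3,1,2,3$ of $C_{6}$), so the transposition trick does not suffice and some additional hypothesis or argument is needed. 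The tree case dodges this entirely because a connected tree has a unique $2$-coloring up to swapping the two parts, which is why a single ``parity'' fix via the odd cycle was enough there. Two smaller bookkeeping issues: condition (i) as stated gives reachability in $k$ steps, not the $k-r$ steps per endpoint that your count of $2(k-r)$ requires; and the cost of sliding to the $K_{r+1}$, synthesizing the permutation (up to $r-1$ transpositions, each three block-moves of up to $k$ nodes), and sliding back is asserted to be ``absorbed'' into $2k\cdot\diam(\G)$ without an accounting that closes.
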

	
	\begin{proof}
		Omitted.
	\end{proof}
	
	Next, we prove Theorem \ref{thm:stationary_measure_Glauber}.

	\begin{proof}{\textbf{of Theorem \ref{thm:stationary_measure_Glauber}}.}
		
		Proposition \ref{prop:state_space_diam_tree} and an elementary Markov chain theory implies that the Glauber chain is irreducible under the assumption of (ii) and has a unique stationary distribution. Hence it remains to show (i), that $\pi:=\mathbb{P}_{F\rightarrow\G}$ is a stationary distribution of the Glauber chain. To this end, write $F=([k],A_{F})$ and let $P$ be the transition kernel of the Glauber chain. It suffices to check the detailed balance equation is satisfied by $\pi$. Namely, let $\x,\y$ be any homomorphisms $F\rightarrow \G$ such that they agree at all nodes of $F$ but for some $\ell\in [k]$. We will show that 
		\begin{align}\label{eq:Glauber_balance_eq}
			\pi(\x) P(\x,\y) =\pi(\y) P(\y,\x).
		\end{align}
		
		Decompose $F$ into two motifs $F_{\ell}=([k],A_{\ell})$ and $F_{\ell}^{c}=([k],A_{\ell}^{c})$, where $A_{\ell}(i,j) = A_{F}(i,j)\mathbf{1}(u\in \{i,j\})$ and $A_{\ell}^{c}(i,j) = A_{F}(i,j)\mathbf{1}(u\notin \{i,j\})$. Note that $A_{F}=A_{\ell}+A_{\ell}^{c}$. Then we can write  
		\begin{align}
			\pi(\x) P(\x,\y) &=	\frac{k^{-1}}{\mathtt{t}(F,\G)}  \left( \prod_{1\le i,j\le k}  A(\mathbf{x}(i),\mathbf{x}(j))^{A_{\ell}^{c}(i,j)} \right) \left(  \prod_{\substack{i\in [k] \\ i\ne \ell}} \alpha(\x(i)) \right) \\
			&\qquad \times \frac{ \prod_{j\ne \ell} [A(\x(j),\x(\ell))A(\x(j),\y(\ell))]^{A_{\ell}(j,\ell)} [A(\x(j),\x(\ell))A(\y(\ell),\x(j))]^{A_{\ell}(\ell,j)}      }{\sum_{1\le c \le n} \left( \prod_{j\ne c} A(\x(j),c)^{A_{\ell}(j,\ell)} A(c,\x(j))^{A_{\ell}(\ell,j)}    \right)  A(c,c)^{A_{\ell}(\ell,\ell)} \alpha(c)} \\
			&\qquad \qquad \times A(\x(\ell),\x(\ell))^{A_{\ell}(\ell,\ell)}A(\y(\ell),\y(\ell))^{A_{\ell}(\ell,\ell)} \alpha(\x(\ell))\alpha(\y(\ell)).
		\end{align}	
		From this and the fact that $\x$ and $\y$ agree on all nodes $j\ne \ell$ in $[k]$, we see that the value of $\pi(\x) P(\x,\y)$ is left unchanged if we exchange the roles of $\x$ and $\y$. This shows \eqref{eq:Glauber_balance_eq}, as desired. 
	\end{proof}

	To prove Theorem \ref{thm:gen_coloring_mixing_tree}, we first recall a canonical construction of coupling $(X,Y)$ between two distributions $\mu$ and $\nu$ on a finite set $\Omega$ such that $\mu(x)\land \nu(x)>0$ for some $x\in \Omega$. Let $p=\sum_{x\in \Omega} \mu(x)\land \nu(x)\in (0,1)$. Flip a coin with the probability of heads equal to $p$. If it lands heads, draw $Z$ from the distribution $p^{-1} \mu \land \nu $ and let $X=Y=Z$. Otherwise, draw independently $X$ and $Y$ from the distributions $(1-p)^{-1}(\mu - \nu)\mathbf{1}(\mu>\nu)$ and $(1-p)^{-1}(\nu - \mu)\mathbf{1}(\nu>\mu)$, respectively. It is easy to verify that $X$ and $Y$ have distributions $\mu$ and $\nu$, respectively, and that $X=Y$ if and only if the coin lands heads. This coupling is called the \textit{optimal coupling} between $\mu$ and $\nu$, since 
	\begin{equation}\label{eq:opt.coupling_TV}
		\mathbb{P}(X\ne Y) = 1-p = \lVert \mu-\nu \rVert_{\text{TV}}.
	\end{equation}
	
	The following lemma is a crucial ingredient for the proof of Theorem \ref{thm:gen_coloring_mixing_tree}. 
	
	\begin{lemma}\label{lemma:contraction_Glauber} 
		Fix a network $\G=([n],A,\alpha)$ and a simple motif  $F=([k],A_{F})$. Let $(\x_{t})_{t\ge 0}$ and $(\x'_{t})_{t\ge 0}$ be the Glauber chains of homomorphisms $F\rightarrow G$ such that $\x_{0}$ is reachable from $\x_{0}'$. Then there exists a coupling between the two chains such that 
		\begin{align}\label{eq:Glauber_mixing_contraction}
			\mathbb{E}[ d_{F,\G}(\x_{t},\x_{t}') ] \le \exp\left(-\frac{ c(\Delta,\G) t}{k}\right) d_{F,\G}(\x_{0},\x_{0}'),
		\end{align}
		where $\Delta=\Delta(F)$ denotes the maximum degree of $F$ defined at \eqref{eq:def_max_deg_edge_weighted_graph}.	
	\end{lemma}
	
	\begin{proof}
		Denote $\rho(t) = d_{F,\G}(\x_{t},\x_{t}')$ for all $t\ge 0$. Let $P$ denote the transition kernel of the Glauber chain. We first claim that if $\x_{t}\sim \x_{t'}$, then there exists a coupling between $\x_{t+1}$ and $\x_{t+1}'$ such that 
		\begin{equation}
			\mathbb{E}[ \rho(t+1) \,|\, \rho(t)=1] = 1- \frac{ c(\Delta,\G)}{k}.
		\end{equation}
		
		Suppose $\x_{t}$ and $\x'_{t}$ differ at a single coordinate, say $u\in [k]$. Denote $N_{F}(u) = \{i\in [k]\,|\, A_{F}(i,u)+A_{F}(u,i)>0\}$.  To couple $\x_{t}$ and $\x_{t+1}'$, first sample $v\in [k]$ uniformly at random. Let $\mu=\mu_{\x_{t},v}$ and $\mu'=\mu_{\x'_{t},v}$. Note that $\mu=\mu'$ if $v\notin N_{F}(u)$. If $v\in N_{F}(u)$, then since $b:=\x_{t}(v)=\x_{t}'(v)$ and $\x_{t},\x_{t}'$ are homomorphisms $F\rightarrow \G$, we have $\mu(b)\land \mu'(b)>0$. Hence the optimal coupling $(X,Y)$ between $\mu$ and $\mu'$ are well-defined. We then let $\x_{t+1}(v)=X$ and $\x_{t+1}'(v)=Y$.
		
		Note that if $v\notin N_{F}(u)\cup\{u\}$, then $X=Y$ with probability 1 and $\rho(t+1)=1$. If $v=u$, then also $X=Y$ with probability 1 and $\rho(t+1)=0$. Otherwise, $v\in N_{F}(u)$ and noting that (\ref{eq:opt.coupling_TV}), either $X=Y$ with probability $1-\lVert \mu-\mu' \rVert_{\text{TV}}$ and $\rho(t+1)=0$, or $X\ne Y$ with probability $\lVert \mu-\mu' \rVert_{\text{TV}}$. In the last case, we have $\rho(t+1)=2$ or $3$ depending on the structure of $\G$. Combining these observations, we have 
		\begin{align}
			&\mathbb{E}[ \rho(t+1)-1\,|\, \rho(t)=1 ] \\
			&\qquad \le 2\mathbb{P}(\rho(t+1)\in \{2,3\}\,|\, \rho(t)=1) - \mathbb{P}(\rho(t+1)=0\,|\, \rho(t)=1)\\
			&\qquad\le -k^{-1} \left( 1  - 2\Delta \lVert \mu - \mu' \rVert_{\text{TV}}\right).
		\end{align}     
		Further, since $\mu$ and $\mu'$ are determined locally, the expression in the bracket is at most $c(\Delta,\G)$. This shows the claim.
		
		To finish  the proof, first note that since $\x_{0}$ and $\x_{0}'$ belongs to the same component of $\mathcal{S}(F,\G)$, so do $\x_{t}$ and $\x_{t}'$ for all $t\ge 0$. We may choose a sequence $\x_{t}=\x^{(0)}_{t}, \x^{(1)}_{t}$, $\cdots, \x^{(\rho(t))}_{t}=\x_{t}'$ of homomorphisms $x^{(i)}_{t}:F\rightarrow G$ such that $\x^{(i)}_{t}\sim \x^{(i+1)}_{t}$ for all $0\le i < m$. Use the similar coupling between each pair $\x_{t}^{(i)}$ and $\x_{t}^{(i+1)}$. Then triangle inequality and the claim yields  
		\begin{align}
			\mathbb{E}[ \rho(t+1) ] \le \sum_{i=0}^{\rho(t)} \mathbb{E}[ d_{H}(\x^{(i)}_{t+1}, \x^{(i+1)}_{t+1}) ] \le \left( -\frac{c(\Delta,\G)}{k}\right) \rho(t),
		\end{align}
		where we denoted by $\x_{t+1}^{(i)}$ the homomorphism obtained after a one-step update of the Glauber chain from $\x_{t}^{(i)}$. Iterating this observation shows the assertion. 
	\end{proof}
	
	\begin{rmkk}\label{remark:c'}
		In the second paragraph in the proof of Lemma \ref{lemma:contraction_Glauber}, we always have $\rho(t+1)\in \{0,1,2\}$ if $A(x,y)>0$ for all $x\ne y\in [n]$. In this case, Lemma \ref{lemma:contraction_Glauber} holds with $c(\Delta,\G)$ replaced by $c'(\Delta,\G)$, which is defined similarly as in (\ref{eq:def_glauber_mixing_constant}) without the factor of $2$. $\hfill\blacktriangle$
	\end{rmkk}
	
	Now Theorem \ref{thm:gen_coloring_mixing_tree} follows immediately. 
	
	\begin{proof}{\textbf{of Theorem \ref{thm:gen_coloring_mixing_tree}}.}
		Let $(\x_{t})_{t\ge 0}$ and $(\x_{t}')_{t\ge 0}$ be Glauber chains of homomorphisms $F\rightarrow \G$. Let $P$ be the transition kernel of the Glauber chain. By Proposition \ref{prop:state_space_diam_tree}, $\x_{0}$ is reachable from $\x_{0}'$ and $d_{F,\G}(\x_{0},\x_{0}') \le 2k(\diam(\G)+1)$. Using the coupling between $\x_{t}$ and $\x_{t}'$ as in Lemma \ref{lemma:contraction_Glauber} and Markov's inequality give
		\begin{equation}
			\mathbb{P}(\x_{t}\ne \x'_{t}) = \mathbb{P}(d_{F,\G}(\x_{t},\x_{t}')\ge 1) \le \mathbb{E}(d_{F,\G}(\x_{t},\x_{t}')) \le 2k\exp\left(-\frac{ c(\Delta,\G) t}{k}\right) (\diam(\G)+1).
		\end{equation}
		Minimizing the left hand side overall coupling between $P^{t}(\x_{0},\cdot)$ and $P^{t}(\x_{0}',\cdot)$ gives 
		\begin{align}
			\lVert P^{t}(\x_{0},\cdot) - P^{t}(\x'_{0},\cdot)  \rVert_{TV}  \le 2k\exp\left(-\frac{c(\Delta,\G) t}{k}\right) (\diam(\G)+1).
		\end{align}	
		Then the assertion follows. 
	\end{proof}
	
	\begin{rmkk}
		Suppose that $\G$ is the complete graph $K_{q}$ with $q$ nodes and uniform distribution on its nodes. Then a homomorphism $F\rightarrow K_{q}$ is a $q$-coloring of $F$ and it is well-known that the Glauber chain of $q$-colorings of $F$ mixes rapidly with mixing time 
		\begin{equation}\label{eq:coloring_mixing}
			t_{mix}(\eps) \le \left\lceil \left( \frac{q-2\Delta}{q-\Delta}  \right) k\log(\eps/k) \right\rceil,
		\end{equation}
		provided $q>2\Delta$ (e.g., \cite[Thm. 14.8]{levin2017markov}). This can be obtained as a special case of Lemma \ref{lemma:contraction_Glauber}. Indeed, note that $\mathcal{S}(F,K_{q})$ is connected and has a diameter at most $k$. Hence according to Lemma \ref{lemma:contraction_Glauber} and Remark \ref{remark:c'}, it is enough to show that 
		\begin{equation}\label{eq:coloring_mixing2}
			c'(\Delta,K_{q}) \ge  \frac{q-2\Delta}{q-\Delta},
		\end{equation}
		where the quantity on the left-hand side is defined in Remark \ref{remark:c'}. To see this, note that when $\G$ is a simple graph with uniform distribution on its nodes, 
		\begin{align}\label{eq:coloring_mixing3}
			1-\lVert \mu_{\x,v} - \mu_{\x',v} \rVert_{\text{TV}} = \sum_{z\in [n]} \mu_{\x,v}(z) \land \mu_{\x',v}(z) = \frac{|\text{supp}(\mu_{\x,v})\cap \text{supp}(\mu_{\x',v})|   }{|\text{supp}(\mu_{\x,v})| \lor |\text{supp}(\mu_{\x',v}) |}.
		\end{align}
		When we take $\G=K_{q}$, it is not hard to see that the last expression in (\ref{eq:coloring_mixing3}) is at most $1- 1/(q-\Delta)$. Hence we have (\ref{eq:coloring_mixing2}), as desired. $\hfill\blacktriangle$
	\end{rmkk}

	\vspace{0.5cm}
	\section{Proof of Stability inequalities}
	\label{section:proofs_stability}
	
	In this section, we provide proofs of the stability inequalities stated in Subsection \ref{subsection:stability_inequalities}, namely, Propositions \ref{prop:conditioned_counting_lemma}, \ref{prop:stability_Ftransform}, and \ref{thm:counting_filtration}.
	
	\vspace{0.2cm}
	\begin{proof}{\textbf{of Proposition \ref{prop:conditioned_counting_lemma}}.}
		First, write 
		\begin{align}
			|\mathtt{t}(H,U\,|\, F) - \mathtt{t}(H,W\,|\, F)| \le \frac{ \mathtt{t}(H,U) |\mathtt{t}(F,W)-\mathtt{t}(F,U)| + \mathtt{t}(F,U)|\mathtt{t}(H,U)-\mathtt{t}(H,W)| }{ \mathtt{t}(F,U)\mathtt{t}(F,W)}. 
		\end{align}
		Since $F$ is a subgraph of $H$, we have $\mathtt{t}(H,U)\le \mathtt{t}(F,U)$ and $|E_{H}|\le |E_{H}|$. Hence the assertion follows by (\ref{ineq:countinglemma}). 	
	\end{proof}

	In order to prove Proposition \ref{prop:stability_Ftransform}, note that the norm of a kernel $W:[0,1]^{2}\rightarrow [0,\infty)$ can also defined by the formula 
	\begin{equation}
		\lVert W \rVert_{\square} =  \sup_{0\le f,g \le 1} \left| \int_{0}^{1}\int_{0}^{1} W(x,y)f(x)g(y) \,dx\,dy \right|,
	\end{equation}
	where $f,g:[0,1]\rightarrow [0,1]$ are measurable functions.
	
	\vspace{0.2cm}
	\begin{proof}{\textbf{of Proposition \ref{prop:stability_Ftransform}}.}
		Let $F=([k],A_{F})$ be a simple motif and $U,W$ denote graphons. Write $\bar{U}=\mathtt{t}(F,U) U^{F}$ and $\bar{W} = \mathtt{t}(F,W) W^{F}$. We first claim that 
		\begin{equation}
			\lVert \bar{U}-\bar{W} \rVert_{\square} \le  \lVert A_{F}\rVert_{1}\cdot \lVert U-W \rVert_{\square},
		\end{equation} 
		from which the assertion follows easily. Indeed, 
		\begin{eqnarray*}
			\lVert U^{F}-W^{F} \rVert_{\square} &=& \frac{1}{ \mathtt{t}(F,U)\mathtt{t}(F,W)} \lVert  \mathtt{t}(F,W) \bar{U} - \mathtt{t}(F,W) \bar{W}  \rVert_{\square} \\
			&\le & \frac{1}{ \mathtt{t}(F,U)\mathtt{t}(F,W)} \left( \mathtt{t}(F,W) \cdot \lVert  \bar{U} - \bar{W} \rVert_{\square} + |\mathtt{t}(F,U)-\mathtt{t}(F,W)| \cdot \lVert \bar{W}   \rVert_{\square}   \right),
		\end{eqnarray*}
		and we have $\lVert \bar{W} \rVert_{\square}/ \mathtt{t}(F,U) = \lVert W^{F} \rVert_{\square} = \lVert W^{F} \rVert_{1} = 1$.
		Then the assertion follows from (\ref{ineq:conditioned_counting_lemma}) and a similar inequality after changing the role of $U$ and $W$.
		
		To show the claim, let $f,g:[0,1]\rightarrow [0,1]$ be two measurable functions. It suffices to show that 
		\begin{equation}
			\left| \int_{0}^{1}\int_{0}^{1} f(x_{1})g(x_{n}) (\bar{U}(x_{1},x_{n})-\bar{W}(x_{1},x_{n}))\,dx_{1} dx_{n}  \right| \le \lVert A_{F}\rVert_{1}\cdot \lVert U-W \rVert_{\square}.
		\end{equation}
		Indeed, the double integral on the left-hand side can be written as 
		\begin{equation}
			\int_{[0,1]^{n}} f(x_{1})g(x_{n})\left(\prod_{1\le i,j\le k}   U(z_{i},w_{j})^{A_{F}(i,j)}-\prod_{1\le i,j\le k} W(z_{i},w_{j})^{A_{F}(i,j)}\right) \, dx_{1}\cdots dx_{n}.
		\end{equation} 
		We say a pair $(i,j)\in [k]^{2}$ a `directed edge' of $F$ if $A_{F}(i,j)=1$. Order all directed edges of $F$ as $E=\{ e_{1},e_{2},\cdots,e_{m} \}$, and denote $e_{r}=(i_{r},j_{r})$. Since $F$ is a simple motif, there is at most one directed edge between each pair of nodes. Hence we can write the term in the parenthesis as the following telescoping sum 
		\begin{eqnarray*}
			&&\sum_{r=1}^{m} U(e_{1})\cdots U(e_{r-1})(U(e_{r})-W(e_{r})) W(e_{r+1})\cdots W(e_{m}) \\
			&& \qquad \qquad = \sum_{r=1}^{m} \alpha(z_{i_{r}}) \beta(w_{j_{r}}) (U(z_{i_{r}}, w_{j_{r}})-W(z_{i_{r}}, w_{j_{r}})),
		\end{eqnarray*}
		where $\alpha(z_{i_{r}})$ is the product of all $U(e_{k})$'s and $W(e_{k})$'s such that $e_{k}$ uses the node $i_{r}$ and $\beta(w_{j_{r}})$ is defined similarly. Now for each $1\le r \le m$, we have  
		\begin{equation}
			\left| \int_{[0,1]^{n}}  f(x_{1})g(x_{n})\alpha(z_{i_{r}}) \beta(w_{j_{r}}) (U(z_{i_{r}}, w_{j_{r}})-W(z_{i_{r}}, w_{j_{r}})) \, dx_{1}\cdots dx_{n} \right| \le \lVert U-W \rVert_{\square}.
		\end{equation}
		The claim then follows. 	
	\end{proof}

	Lastly, we prove Theorem \ref{thm:counting_filtration}. It will be convenient to introduce the following notion of distance between filtrations of kernels.
	\begin{equation}
		d_{\blacksquare}( U,W) =\int_{0}^{\infty} \lVert \mathbf{1}(U\ge t) -\mathbf{1}(W\ge t) \rVert_{\square}\,dt 
	\end{equation}
	For its `unlabeled' version, we define 
	\begin{equation}
		\delta_{\blacksquare}(U,W) = \inf_{\varphi} \,d_{\blacksquare}(U,W^{\varphi})
	\end{equation}
	where the infimum ranges over all measure-preserving maps $\varphi:[0,1]\rightarrow [0,1]$.
	
	An interesting observation is that this new notion of distance between kernels interpolates the distances induced by the cut norm and the 1-norm. For a given graphon $U:[0,1]^{2}\rightarrow [0,1]$ and $t\ge 0$, we denote by $U_{\ge t}$ the 0-1 graphon defined by 
	\begin{align}
		U_{\ge t}(x,y) = \mathbf{1}(U(x,y)\ge t).
	\end{align}
	
	\begin{prop}\label{prop:filtration_cutnormbound}
		For any two graphons $U,W:[0,1]^{2}\rightarrow [0,1]$, we have 
		\begin{equation}
			\delta_{\square}(U,W) \le \delta_{\blacksquare}(U,W) \le \delta_{1}(U,W).
		\end{equation}
	\end{prop}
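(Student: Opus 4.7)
The plan is to establish both inequalities from the elementary layer-cake identity
\[
U(x,y) - W(x,y) \;=\; \int_0^{\infty} \bigl(\mathbf{1}(U(x,y)\ge t) - \mathbf{1}(W(x,y)\ge t)\bigr)\, dt,
\]
which holds pointwise for graphons valued in $[0,1]$. Since both $\delta_\square$ and $\delta_\blacksquare$ are defined by the same infimum over measure-preserving maps $\varphi$, it suffices to prove the corresponding labeled inequalities $\lVert U-W\rVert_\square \le d_\blacksquare(U,W)$ and $d_\blacksquare(U,W)\le \lVert U-W\rVert_1$ and then take the infimum.

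For the left inequality, I would pick any measurable $A,B\subseteq [0,1]$, insert the layer-cake formula, and interchange the order of integration via Fubini:
\[
\left|\int_A\!\int_B (U-W)\right|
= \left|\int_0^\infty \!\!\int_A\!\int_B \bigl(\mathbf{1}(U\ge t)-\mathbf{1}(W\ge t)\bigr)\, dt\right|
\le \int_0^\infty \bigl\lVert \mathbf{1}(U\ge t)-\mathbf{1}(W\ge t)\bigr\rVert_\square\, dt.
\]
The right-hand side is $d_\blacksquare(U,W)$ and is independent of $A,B$, so taking the supremum gives $\lVert U-W\rVert_\square \le d_\blacksquare(U,W)$.

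For the right inequality, the key observation is that for fixed $x,y$ the integrand in the layer-cake decomposition has constant sign in $t$ (it is $+1$ on the interval between $W(x,y)$ and $U(x,y)$ when $U\ge W$, and $-1$ otherwise, and zero elsewhere), so
\[
|U(x,y)-W(x,y)| \;=\; \int_0^\infty \bigl|\mathbf{1}(U(x,y)\ge t)-\mathbf{1}(W(x,y)\ge t)\bigr|\, dt.
\]
Combining this with the already-recorded fact $\lVert f\rVert_\square \le \lVert f\rVert_1$ applied to each slice $f_t := \mathbf{1}(U\ge t)-\mathbf{1}(W\ge t)$, and then applying Fubini, gives
\[
d_\blacksquare(U,W)
\le \int_0^\infty \lVert f_t\rVert_1\, dt
= \int_{[0,1]^2}\!\int_0^\infty |f_t(x,y)|\, dt\, dx\, dy
= \lVert U-W\rVert_1.
\]
Replacing $W$ by $W^\varphi$ and taking the infimum over measure-preserving $\varphi$ yields $\delta_\blacksquare(U,W)\le \delta_1(U,W)$. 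There is no genuine obstacle here; the only point to check carefully is that the layer-cake interchange of integrals is justified (which is immediate since the integrand is nonnegative or uniformly bounded, and the domain of $t$ effectively reduces to $[0,1]$ since $U,W$ are graphons), and that the infimum over $\varphi$ respects both inequalities, which it does because the bounds hold pointwise in $\varphi$.
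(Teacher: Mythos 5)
Your proof is correct. The left inequality is established exactly as in the paper: layer-cake decomposition of $U-W$, Fubini, and pushing the supremum over $A,B$ inside the $t$-integral. For the right inequality, however, you take a genuinely different and cleaner route. You combine the pointwise identity $|U(x,y)-W(x,y)|=\int_0^\infty|\mathbf{1}(U\ge t)-\mathbf{1}(W\ge t)|\,dt$ (valid because the integrand has constant sign in $t$ for fixed $(x,y)$) with the slice-wise bound $\lVert f_t\rVert_\square\le\lVert f_t\rVert_1$ and Fubini, which immediately gives $d_\blacksquare(U,W)\le\lVert U-W\rVert_1$. The paper instead first reduces to simple functions "by a standard approximation argument," partitions $[0,1]^2$ into blocks where both graphons are constant, builds a telescoping chain of interpolating kernels $U^0,\dots,U^n$, computes $\lVert U^j_{\ge t}-U^{j-1}_{\ge t}\rVert_\square=\mu(R_j)\mathbf{1}\{t\in[u_j\wedge w_j,u_j\vee w_j]\}$ exactly, and sums via the triangle inequality. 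Your argument avoids the approximation and telescoping steps entirely and applies directly to arbitrary measurable graphons, at the cost of nothing; the paper's version yields an exact expression for the block contributions but needs the unproved density step. Both reductions to the labeled inequalities and the passage to the infimum over measure-preserving $\varphi$ are handled the same way and are valid since the labeled bounds hold for every $\varphi$.
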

	
	\begin{proof}
		It suffices to show the following `labeled' version of the assertion:
		\begin{equation}
			d_{\square}(U,W) \le d_{\blacksquare}(U,W)  \le d_{1}(U,W).
		\end{equation}
		To show the first inequality, note that for any fixed $(x,y)\in [0,1]^{2}$, 
		\begin{equation}
			\int_{0}^{1} \mathbf{1}(U(x,y)\ge t) - \mathbf{1}(W(x,y)\ge t)\,dt = W(x,y)-U(x,y).
		\end{equation}
		Hence the first inequality follows easily from the definition and Fubini's theorem: 
		\begin{eqnarray}
			\lVert U-W \rVert_{\square} &=& \sup_{S\times T\subseteq [0,1]^{2}} \left| \int_{S}\int_{T} U(x,y)-W(x,y) \,dx\,dy \right|\\
			&=& \sup_{S\times T\subseteq [0,1]^{2}} \left| \int_{S}\int_{T}\int_{0}^{1} \mathbf{1}(U>t) -\mathbf{1}(W>t) \,dt\,dx\,dy \right|\\
			&=& \sup_{S\times T\subseteq [0,1]^{2}} \left| \int_{0}^{1}\int_{S}\int_{T} \mathbf{1}(U>t) -\mathbf{1}(W>t) \,dx\,dy\,dt \right|\\
			&\le&  \int_{0}^{1}\sup_{S\times T\subseteq [0,1]^{2}} \left|\int_{S}\int_{T} \mathbf{1}(U>t) -\mathbf{1}(W>t) \,dx\,dy \right|\,dt.
		\end{eqnarray}
		
		For the second inequality, by a standard approximation argument, it is enough to show the assertion for the special case when both $U$ and $W$ are simple functions. Hence we may assume that there exists a partition $[0,1]^{2}=R_{1}\sqcup \cdots \sqcup R_{n}$ into measurable subsets such that both kernels are constant on each $R_{j}$. Define kernels $U^{0},\cdots,U^{n}$ by  
		\begin{equation*}
			U^{j}(x,y) = U(x,y) \one\{(x,y)\in R_{1}\cup\cdots\cup R_{j}  \} + W(x,y) \one\{(x,y)\in R_{j+1}\cup\cdots\cup R_{n}  \}. 
		\end{equation*}
		In words, $U^{j}$ uses values from $U$ on the first $j$ $R_{i}$'s, but agrees with $W$ on the rest. Denote by $u_{j}$ and $w_{j}$ the values of $U$ and $W$ on the $R_{j}$, respectively. Observe that 
		\begin{equation*}
			\left|\one\{U^{j}(x,y)> t\}- \one\{ U^{j-1}(x,y)>t \}\right| = \begin{cases}
				1 & \text{if $t\in [u_{j}\land w_{j},  u_{j}\lor w_{j}]$ and $(x,y)\in R_{j}$ } \\ 
				0 & \text{otherwise}.
			\end{cases}
		\end{equation*}
		This yields that, for any $p\in [0,\infty)$
		\begin{equation*}
			\lVert U^{j}_{\ge t}-U^{j-1}_{\ge t} \rVert_{\square} = \mu(R_{j}) \one \{t\in [u_{j}\land w_{j},  u_{j}\lor w_{j}] \}.
		\end{equation*}
		Now triangle inequality for the cut norm gives  
		\begin{eqnarray*}
			\int_{0}^{1} \lVert U_{\ge t}-W_{\ge t} \rVert_{\square}\,dt &\le& \sum_{j=1}^{n} |u_{j}-w_{j}|\mu(R_{j}) \\
			&=& \int_{0}^{1}\int_{0}^{1} \sum_{j=1}^{k^{2}}\left| U^{j}(x,y)-U^{j-1}(x,y) \right|\,dx\,dy\\
			&=& \int_{0}^{1}\int_{0}^{1}|U(x,y)-W(x,y)|\,dx\,dy \\
			&=& \lVert U-W \rVert_{1}.
		\end{eqnarray*}
		This shows the assertion. 
	\end{proof}

	We need one more preparation to prove Theorem \ref{thm:counting_filtration}. Let $F=([k],A_{F})$ and $H=([k],A_{H})$ be motifs and  $U:[0,1]^{2}\rightarrow [0,1]$ be a graphon. For each $t\ge 0$, denote 
	\begin{align}
		\mathtt{t}(H,U_{\ge t}\,;\, F) = \int_{[0,1]^{k}} \prod_{1\le i,j\le k} \mathbf{1}(U(x_{i},x_{j})^{A_{H}(i,j)}\ge t ) \prod_{1\le i,j\le k} U(x_{i},x_{j})^{A_{F}(i,j)} \, dx_{1}\cdots dx_{k}.
	\end{align}
	Then it is easy to see that 
	\begin{align}
		\mathtt{f}(H,U\,|\, F)(t) = \frac{1}{\mathtt{t}(F,U)} \mathtt{t}(H,U_{\ge t}\,;\, F).
	\end{align}
	
	\begin{prop}\label{prop:counting_interpolation}
		Let  $H=([k],A_{H})$ and $F=([k],A_{F})$ be simple motifs such that $H+F=([k],A_{F}+A_{H})$ is simple. Fix graphons $U,W:[0,1]^{2}\rightarrow [0,1]$. Then 
		\begin{align}\label{eq:counting_interpolation}
			|\mathtt{t}(H,U_{\ge t};F) - \mathtt{t}(H,W_{\ge t};F)| \le   \lVert A_{F} \rVert_{1}\cdot \delta_{\square}(U,W) + \lVert A_{H} \rVert_{1}\cdot \delta_{\square}(U_{\ge t}, W_{\ge t}).
		\end{align}
	\end{prop}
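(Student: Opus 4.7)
The plan is to adapt the telescoping argument used in the proof of Proposition \ref{prop:stability_Ftransform} above to a mixed-kernel setting. Because $H+F$ is simple, the edge sets $E_H, E_F$ are disjoint and no two of these edges share both endpoints, so (for $t>0$, which is the only interesting case) the integrand defining $\mathtt{t}(H,U_{\ge t};F)$ factors as
\begin{equation*}
\prod_{(i,j)\in E_H} U_{\ge t}(x_i,x_j) \cdot \prod_{(i,j)\in E_F} U(x_i,x_j).
\end{equation*}
I would treat this as a ``mixed'' homomorphism density of $H+F$ with $U_{\ge t}$ on the $H$-edges and $U$ on the $F$-edges.

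First, I would introduce the intermediate quantity $J$ obtained from $\mathtt{t}(H,U_{\ge t};F)$ by replacing $U$ by $W$ on all $F$-edges while keeping $U_{\ge t}$ on the $H$-edges, and use the triangle inequality to split $|\mathtt{t}(H,U_{\ge t};F)-\mathtt{t}(H,W_{\ge t};F)|$ into $|I-J|+|J-I'|$. For $|I-J|$, I would telescope edge by edge along $E_F$: enumerate the $F$-edges as $e_1,\dots,e_m$ with $m=\lVert A_F\rVert_1$, replace $U$ by $W$ one edge at a time, and observe that each single-step difference takes the form
\begin{equation*}
\int_{[0,1]^k} [U(x_{i_r},x_{j_r})-W(x_{i_r},x_{j_r})]\,\alpha(x_{i_r})\beta(x_{j_r})\,dx_1\cdots dx_k,
\end{equation*}
where $\alpha,\beta\in[0,1]$ are the products of the remaining factors (each of which depends on at most one of $x_{i_r}, x_{j_r}$ once the other variables are integrated out; the simplicity of $H+F$ is exactly what prevents any surviving factor from coupling the pair $(i_r,j_r)$). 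The test-function characterization of the cut norm then bounds each term by $\lVert U-W\rVert_\square$, summing to $\lVert A_F\rVert_1\cdot\lVert U-W\rVert_\square$. Applying the identical telescoping to $|J-I'|$ along $E_H$, this time with $U_{\ge t},W_{\ge t}$ playing the roles of $U,W$, yields the bound $\lVert A_H\rVert_1\cdot\lVert U_{\ge t}-W_{\ge t}\rVert_\square$.

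To pass from these labeled cut norms to the cut distances $\delta_\square$, I would exploit invariance of both integrals under a simultaneous measure-preserving relabeling of $[0,1]$ applied to $W$, using the crucial commutation $(W^\varphi)_{\ge t} = (W_{\ge t})^\varphi$. Applying the labeled bound with $W$ replaced by $W^\varphi$ and then optimizing over $\varphi$ allows the two cut norms to be controlled by the corresponding cut distances. The main subtlety I anticipate lies in precisely this last step: the same $\varphi$ couples both error terms, so their infima cannot be decoupled verbatim (since $\inf_\varphi[a(\varphi)+b(\varphi)]$ is in general larger than $\inf_\varphi a + \inf_\varphi b$ rather than smaller). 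In practice one either works throughout with the labeled norms and, when applying the result in Theorem \ref{thm:counting_filtration}, integrates against $t$ to recover $\delta_\blacksquare \le \delta_1$ via Proposition \ref{prop:filtration_cutnormbound}, or one chooses $\varphi$ to be near-optimal for one of the terms and estimates the other separately.
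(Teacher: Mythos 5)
Your proposal is correct and follows essentially the same route as the paper: the paper telescopes through all edges of $E_{F}\cup E_{H}$ in a single pass (defining $a_{\ell},b_{\ell}$ as $U(e_{\ell})$ or $\mathbf{1}(U(e_{\ell})\ge t)$ according to edge type), uses simplicity of $H+F$ to factor the surviving coefficient as $f_{\ell}(x_{i})g_{\ell}(x_{j})$, and bounds each term by the appropriate labeled cut norm, exactly as you do in your two-stage version. Your closing observation is apt: the paper's final line (``optimizing over all measure preserving maps'') glosses over the fact that a single $\varphi$ appears in both error terms, so the cleanly decoupled bound with two independent $\delta_{\square}$ infima does not literally follow; the honest conclusion is the labeled bound optimized jointly over $\varphi$, which suffices for the application in Theorem \ref{thm:counting_filtration}.
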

	
	\begin{proof}
		Denote $E_{F}=\{(i,j)\in [k]^{2}\,|\, A_{F}(i,j)>0 \}$ and  $E_{H}=\{(i,j)\in [k]^{2}\,|\, A_{H}(i,j)>0 \}$. Then by the hypothesis, $E_{F}$ and $E_{H}$ are disjoint and $E:=E_{F}\cup E_{F} = \{(i,j)\in [k]^{2}\,|\, A_{F}(i,j)+A_{H}(i,j)>0 \}$. Write $E=\{e_{1},e_{2},\cdots,e_{m}\}$, where $m=|E|$.

		Fix a vertex map $[k]\mapsto [0,1]$, $i\mapsto x_{i}$. For each $1\le \ell \le m$, define $a_{\ell}$ and $b_{\ell}$ by 
		\begin{align}
			a_{\ell} = 
			\begin{cases}
				U(e_{\ell}) & \text{if $e_{\ell}\in E_{F}$} \\
				\mathbf{1}(U(e_{\ell}\ge t)) & \text{if $e_{\ell}\in E_{H}$},
			\end{cases} 
			\qquad 
			b_{\ell} = 
			\begin{cases}
				W(e_{\ell}) & \text{if $e_{\ell}\in E_{F}$} \\
				\mathbf{1}(W(e_{\ell}\ge t)) & \text{if $e_{\ell}\in E_{H}$}.
			\end{cases} 
		\end{align}
		Then we have 
		\begin{align}
			&\prod_{(i,j)\in E_{H}} \mathbf{1}(U(x_{i},x_{j})\ge t)  \prod_{(i,j)\in E_{F}} U(x_{i},x_{j}) = \prod_{\ell=1}^{m} a_{\ell}, \\
			&\prod_{(i,j)\in E_{H}\setminus E_{F}} \mathbf{1}(W(x_{i},x_{j})\ge t)  \prod_{(i,j)\in E_{F}} W(x_{i},x_{j}) = \prod_{\ell=1}^{m} b_{\ell}.
		\end{align}
		Also,  we can write the difference between the integrands as the following telescoping sum 
		\begin{align}
			\prod_{\ell=1}^{m} a_{\ell} - \prod_{\ell=1}^{m} b_{\ell} &=  \sum_{\ell=1}^{m} (a_{1}\cdots a_{\ell}b_{\ell+1}\cdots b_{m} - a_{1}\cdots a_{\ell-1}b_{\ell}\cdots b_{m}) \\
			& = \sum_{\ell=1}^{m} c_{\ell} (a_{\ell}-b_{\ell}),
		\end{align}
		where each $c_{\ell}$ is a suitable product of $a_{i}$'s and $b_{i}$'s. Note that since $U$ and $W$ are graphons, each $c_{\ell}\in [0,1]$.  Moreover, say the $m$th summand corresponds to an edge $(i,j)\in E_{H}$. By the assumption on simple motifs, none of $a_{\ell}$ and $b_{\ell}$ depend on both coordinates $x_{i}$ and $x_{j}$ except $a_{\ell}$ and $b_{\ell}$. Hence $c_{\ell}$ can be written as the product $f_{\ell}(x_{i})g_{\ell}(x_{j})$ of two functions. Furthermore, 
		\begin{align}
			a_{\ell}-b_{\ell} = 
			\begin{cases}
				U(x_{i},x_{j})-W(x_{i},x_{j}) & \text{if $(i,j)\in E_{F}$} \\
				\mathbf{1}(U(x_{i},x_{j})\ge t)-\mathbf{1}(W(x_{i},x_{j})\ge t) &  \text{if $(i,j)\in E_{H}$}. 
			\end{cases}
		\end{align} 
		Hence if $(i,j)\in E_{F}$, we get 
		\begin{align}
			&\left| \int_{[0,1]^{k}} c_{\ell} (a_{\ell}-b_{\ell}) \,dx_{1}\cdots dx_{k}\right| \\
			& \qquad = \left| \int_{[0,1]^{k-2}} \left( \int_{[0,1]^{2}} f_{\ell}(x_{i})g_{\ell}(x_{j}) U(x_{i},x_{j})-W(x_{i},x_{j}) \, dx_{i} dx_{j} \right) \prod_{\ell\ne i,j} dx_{\ell}\right| \\
			& \qquad \le \lVert U-W \rVert_{\square}.
		\end{align}
		Similarly, for $(i,j)\in E_{H}$, we have 
		\begin{align}
			\left| \int_{[0,1]^{k}} c_{\ell} (a_{\ell}-b_{\ell}) \,dx_{1}\cdots dx_{k}\right|  \le \lVert U_{\ge t}-W_{\ge t} \rVert_{\square}.
		\end{align}
		Therefore the assertion follows from a triangle inequality and optimizing the bound over all measure-preserving maps, as well as noting that $|E_{F}|=\lVert A_{F} \rVert_{1}$ and $|E_{H}|=\lVert A_{H} \rVert_{1}$.
	\end{proof}

	Now we prove Theorem \ref{thm:counting_filtration}. 
	
	\begin{proof}{\textbf{of Theorem \ref{thm:counting_filtration}}.}
		Let $F=([k],A_{F})$ and $H=([k],A_{H})$ be simple motifs such that $H+F:=([k],A_{F}+A_{H})$ is simple. First, use a triangle inequality to write  
		\begin{align}
			&|\mathtt{f}(H,U\,|\, F)(t) - \mathtt{f}(H,W\,|\, F)(t)| \\
			&\qquad \le \frac{  \mathtt{t}(F,U) |\mathtt{t}(H,W_{t\ge t}\,;\, F)-\mathtt{t}(H,U_{t\ge t}\,;\,F )| + \mathtt{t}(H,U_{\ge t}\,;\, F)|\mathtt{t}(F,U)-\mathtt{t}(F,W)|  }{ \mathtt{t}(F,U)\mathtt{t}(F,W)}.
		\end{align}
		Note that $\mathtt{t}(F+H,U)\le \mathtt{t}(F,U)$ and for each $t\in [0,1]$ we have $\mathtt{t}(H,U_{\ge t}\,;\, F)\in [0,1]$ by definition. Hence by using Proposition \ref{prop:counting_interpolation}, we get 
		\begin{align}
			& |\mathtt{f}(H,U\,|\, F)(t) - \mathtt{f}(H,W\,|\, F)(t)| \\
			&\qquad \le \frac{\lVert A_{F} \rVert_{1}\cdot \delta_{\square}(U,W) + \lVert A_{H} \rVert_{1} \cdot \delta_{\square}(U_{\ge t}, W_{\ge t}) }{\mathtt{t}(F,W)} + \frac{\lVert A_{F} \rVert_{1}\cdot \delta_{\square}(U,W)}{\mathtt{t}(F,U)}.
		\end{align}
		Integrating this inequality over $t\in [0,1]$ and using Proposition \ref{prop:filtration_cutnormbound} then give 
		\begin{align}
			& \lVert \mathtt{f}(H,U\,|\, F)(t) - \mathtt{f}(H,W\,|\, F)(t)\rVert_{1} \\
			&\qquad \le |E_{F}|\cdot \delta_{\square}(U,W)\left(  \frac{1}{\mathtt{t}(F,W)} + \frac{1}{\mathtt{t}(F,U)} \right)+ \frac{|E_{H}\setminus E_{F}|\cdot \delta_{1}(U,W)}{\mathtt{t}(F,W)}.
		\end{align}
		We can obtain a similar inequality after we change the roles of $U$ and $W$. Then the assertion follows optimizing between the two upper bounds.  
	\end{proof}

	\section{Network data sets}\label{section:Network_datasets}
	
	In Sections \ref{section:sampling_hamiltonian} and \ref{section:FB}, we examined the following real-world  and synthetic networks:
	\begin{enumerate}[itemsep=0.1cm]
		\item \dataset{Caltech}: This connected network, which is part of the {\sc Facebook100} data set~\cite{traud2012social} (and which was studied previously as part of the {\sc Facebook5} data set~\cite{Traud2011}), has 762 nodes and 16,651 edges. The nodes represent users in the Facebook network of Caltech on one day in {fall 2005}, and the edges encode Facebook `friendships' between these accounts. 
		
		\item \dataset{Simmons}: This connected network, which is part of the {\sc Facebook100} data set~\cite{traud2012social} (and which was studied previously as part of the {\sc Facebook5} data set~\cite{Traud2011}), has 1,518 nodes and 65,976 edges. The nodes represent users in the Facebook network of Simmons on one day in {fall 2005}, and the edges encode Facebook `friendships' between these accounts. 
		
		\item \dataset{Reed}: This connected network, which is part of the {\sc Facebook100} data set~\cite{traud2012social} (and which was studied previously as part of the {\sc Facebook5} data set~\cite{Traud2011}), has 962 nodes and 37,624 edges. The nodes represent users in the Facebook network of Reed on one day in {fall 2005}, and the edges encode Facebook `friendships' between these accounts. 
		
		\item \dataset{NYU}: This connected network, which is part of the {\sc Facebook100} data set~\cite{traud2012social} (and which was studied previously as part of the {\sc Facebook5} data set~\cite{Traud2011}), has 21,679 nodes and 1,431,430 edges. The nodes represent users in the Facebook network of NYU on one day in {fall 2005}, and the edges encode Facebook `friendships' between these accounts. 	
		
		\item \dataset{Virginia}: This connected network, which is part of the {\sc Facebook100} data set~\cite{traud2012social}, has 21,325 nodes and 1,396,356 edges. The nodes represent user accounts in the Facebook network of Virginia on one day in {fall 2005}, and the edges encode Facebook `friendships' between these accounts.

		\item \dataset{UCLA}: This connected network, which is part of the {\sc Facebook100} data set~\cite{traud2012social}, has 20,453 nodes and 747,604 edges. The nodes represent user accounts in the Facebook network of UCLA on one day in {fall 2005}, and the edges encode Facebook `friendships' between these accounts. 
		
		\item \dataset{Wisconsin}: This connected network, which is part of the {\sc Facebook100} data set~\cite{traud2012social}, has 23,842 nodes and 835,952 edges. The nodes represent user accounts in the Facebook network of Wisconsin on one day in {fall 2005}, and the edges encode Facebook `friendships' between these accounts.
	\end{enumerate}

	\begin{enumerate}[resume, itemsep=0.1cm]
		\item \dataset{ER}: An Erd\H{o}s--R\'{e}nyi (ER) network~\cite{erdds1959random,newman2018}, which we denote by $\textup{ER}(n,p)$, is a random-graph model. The parameter $n$ is the number of nodes and the parameter $p$ is the independent, homogeneous probability that each pair of distinct nodes has an edge between them. The network \dataset{ER} is an individual graph that we draw from {$\textup{ER}({5000},0.01)$}.

		\item \dataset{WS}: A Watts--Strogatz (WS) network, which we denote by $\textup{WS}(n,k,p)$, is a random-graph model to study the small-world phenomenon~\cite{watts1998collective,newman2018}. In the version of WS networks that we use, we start with an $n$-node ring network in which each node is adjacent to its $k$ nearest neighbors. With independent probability $p$, we then remove and rewire each edge so that it connects a pair of distinct nodes that we choose uniformly at random. The network \dataset{WS} is an individual graph that we draw from {$\textup{WS}({5000},50, 0.10)$}.
		
		\item \dataset{BA}: A Barab\'{a}si--Albert (BA) network, which we denote by $\textup{BA}(n,{n_{0}})$, is a random-graph model with a linear preferential-attachment mechanism~\cite{barabasi1999emergence,newman2018}. In the version of BA networks that we use, we start with ${n_{0}}$ isolated nodes and we introduce new nodes with ${n_{0}}$ new edges each that attach preferentially (with a probability that is proportional to node degree) to existing nodes until we obtain a network with 
		$n$ nodes. 
		The network \dataset{BA} is an individual graph that we draw from {$\textup{BA}({5000},50)$}. 
		
		\item {\dataset{SBM}: We use stochastic-block-model (SBM) networks in which each block is an ER network~\cite{holland1983stochastic}.
			Fix disjoint finite sets $C_{1}\cup \cdots \cup C_{{k_{0}}}$ and a {$k_{0} \times k_{0}$} matrix $B$ whose entries are real numbers between $0$ and $1$. An SBM 
			network, which we denote by 
			{$\textup{SBM}(C_{1},\ldots, C_{{k_{0}}}, B)$}, {has the}
			node set $V = C_{1}\cup \cdots \cup C_{{k_{0}}}${. For} each node pair {$(x,y)$}, there is an edge between {$x$ and $y$} {with independent probabilities} {$B[i_{0},j_{0}]$, {with indices} $i_{0},j_{0}\in \{1,\ldots,k_{0}\}$} {such} that {$x\in C_{i_{0}}$ and $y\in C_{j_{0}}$. If $k_{0}=1$} and $B$ has a constant $p$ in all entries, this SBM specializes to the Erd\H{o}s--R\'{e}nyi (ER) {random-graph model} $\textup{ER}(n,p)$ with $n=|C_{1}|$. 
			The networks \dataset{SBM} is an individual graphs that we draw from {$\textup{SBM}(C_{1},\ldots,C_{k_{0}}, B)$} with $|C_{1}|=|C_{2}|=|C_{3}|= \text{1,000}$, where $B$ is the $3 \times 3$ matrix whose diagonal entries are $0.5$ and whose off-diagonal entries are $0.001$. It has 3,000 nodes and 752,450 edges.}
	\end{enumerate}

	\section{Additional figures and tables}
	
	\begin{figure*}[h]
		\centering
		\includegraphics[width=0.9 \linewidth]{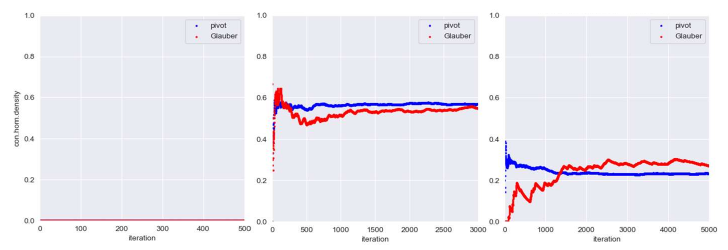}
		\caption{ Computing $\mathtt{t}(H_{k,0},\G_{n}\,|\, F_{k,0})$ by time averages of Glauber (red) and Pivot (blue) chains $F_{k,0}\rightarrow \G_{50}$ for $k=0$ (left),   $k=3$ (middle), and $k=9$  (right).  
		}
		\label{fig:torus_CHD}
	\end{figure*}
	
	\begin{figure*}[h]
		\centering
		\includegraphics[width=0.9 \linewidth]{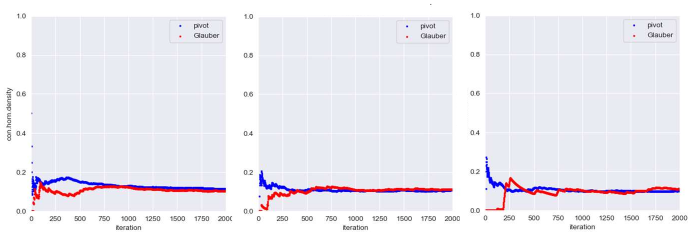}
		\caption{ Computing $\mathtt{t}(H_{k,0},\G_{n}\,|\, F_{k,0})$ by time averages of Glauber (red) and Pivot (blue) chains $F_{k,0}\rightarrow \G_{50}^{0.1,0}$ for  $k=2$ (left),   $k=3$ (middle), and  (right) $k=9$ (right).  
		}
		\label{fig:torus_long_edges_CHD}
	\end{figure*}

	\begin{figure*}[h]
		\centering
		\includegraphics[width=0.95 \linewidth]{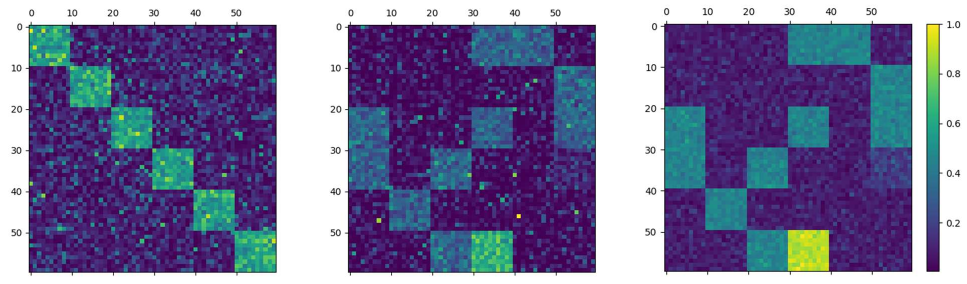}
		\caption{ Plots of random block matrices $B_{1}$ (left), $B_{2}$ (middle), and $B_{3}$ (right). Colors from dark blue to yellow denote values of entries from 0 to 1, as shown in the color bar on the right. 
		}
		\label{fig:block_networks_pic}
	\end{figure*} 
	
	\begin{figure*}[h]
		\centering
		\includegraphics[width=1 \linewidth]{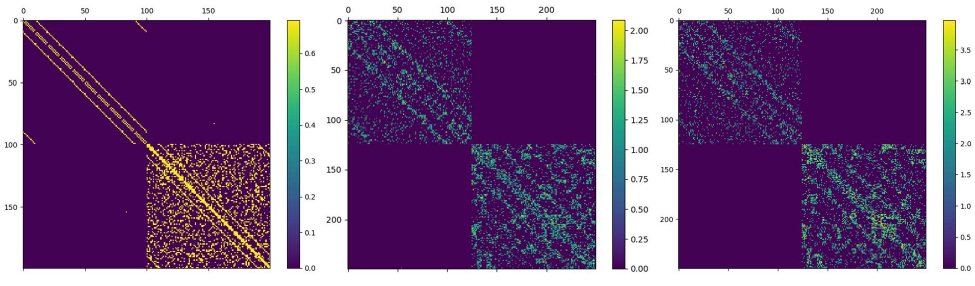}
		\caption{ Plot of log transforms of the edge weight matrices $A_{1}$ (left), $A_{2}$ (middle), and $A_{3}=A_{2}^{C_{3}}$ (right). Corresponding color bars are shown to the right of each plot.
		}
		\label{fig:barbell_pic}
	\end{figure*}

	\begin{figure*}[h]
		\centering
		\includegraphics[width=0.9 \linewidth]{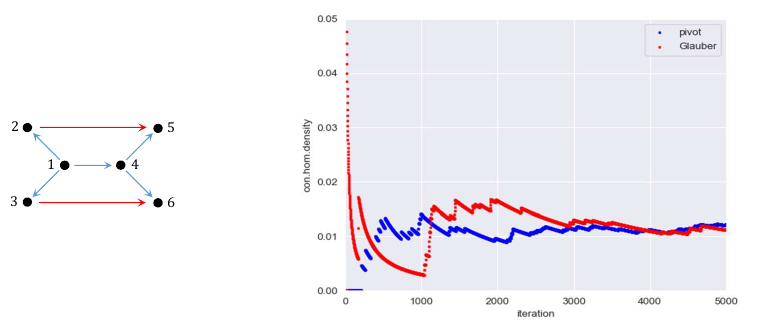}
		\caption{ Computing $\mathtt{t}(H,\G_{n}\,|\, F)$ via time averages of Glauber/Pivot chains $F\rightarrow \G_{50}^{0.1,0}$. The underlying rooted tree motif $F=([6], \mathbf{1}_{\{(1,2),(1,3),(1,4),(4,5),(4,6)\} })$ is depicted on the left, and $H=([6],A_{H})$ is obtained from $F$ by adding directed edges (red) $(2,5)$ and $(3,6)$.
		}
		\label{fig:torus_long_edges_motif_CHD}
	\end{figure*}
	
	\begin{table*}[h]
		\centering
		\includegraphics[width=0.85 \linewidth]{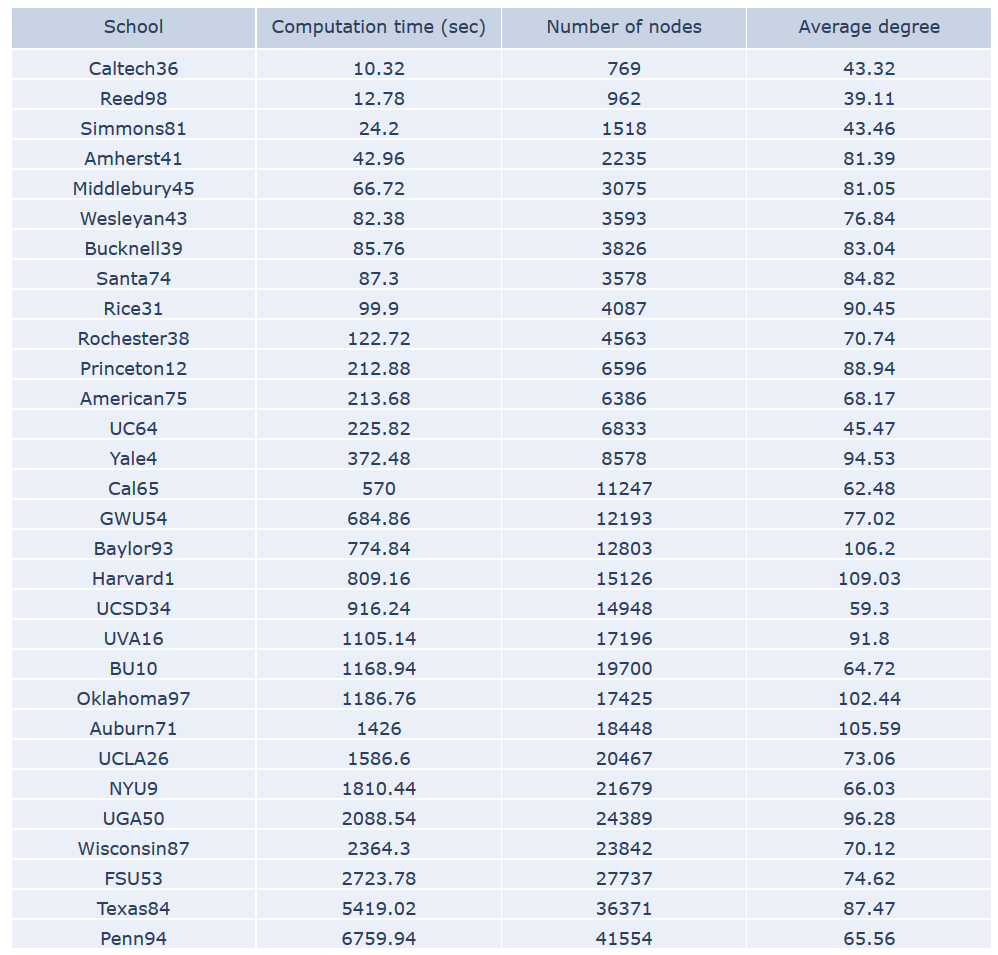}
		\vspace{0.2cm}
		\caption{Computation times for computing MACCs of the
			Facebook100 dataset shown in Figure \ref{fig:FB_MACC} and
			number of nodes and the average degree of the corresponding
			networks. Results are shown for 30 networks randomly chosen
			amongst those in the Facebook100 dataset. 
		}
		\label{fig:computation_time_table}
	\end{table*}

	\begin{table*}[h]
		\centering
		\includegraphics[width=1 \linewidth]{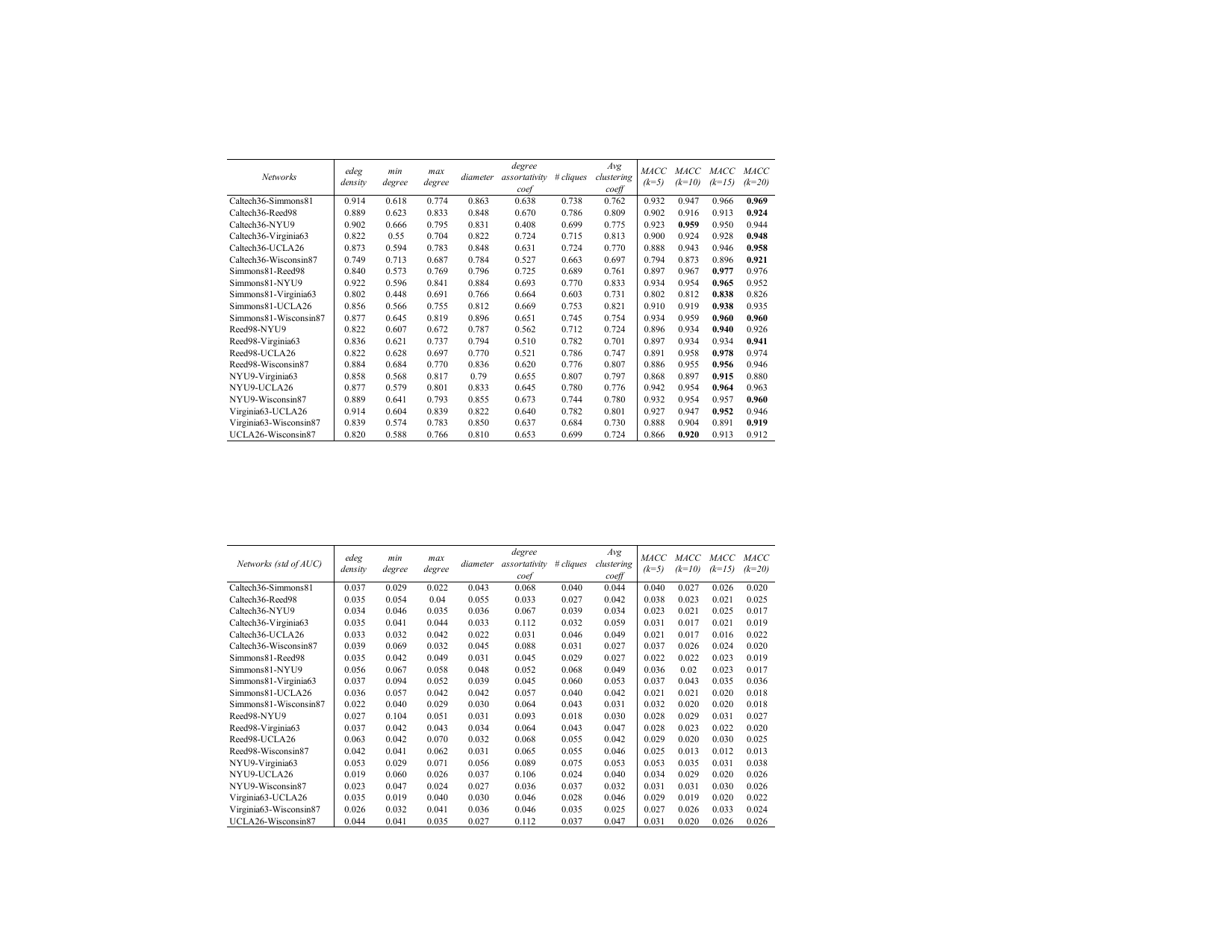}
		\caption{  \commHL{The standard deviations of AUC scores over ten independent trials of the subgraph classification tasks. See Table \ref{table:subgraph_classification1} for more details.}  
		}
		\label{table:subgraph_classification_std}
	\end{table*}

	\begin{figure*}[h]
		\centering
		\includegraphics[width=1 \linewidth]{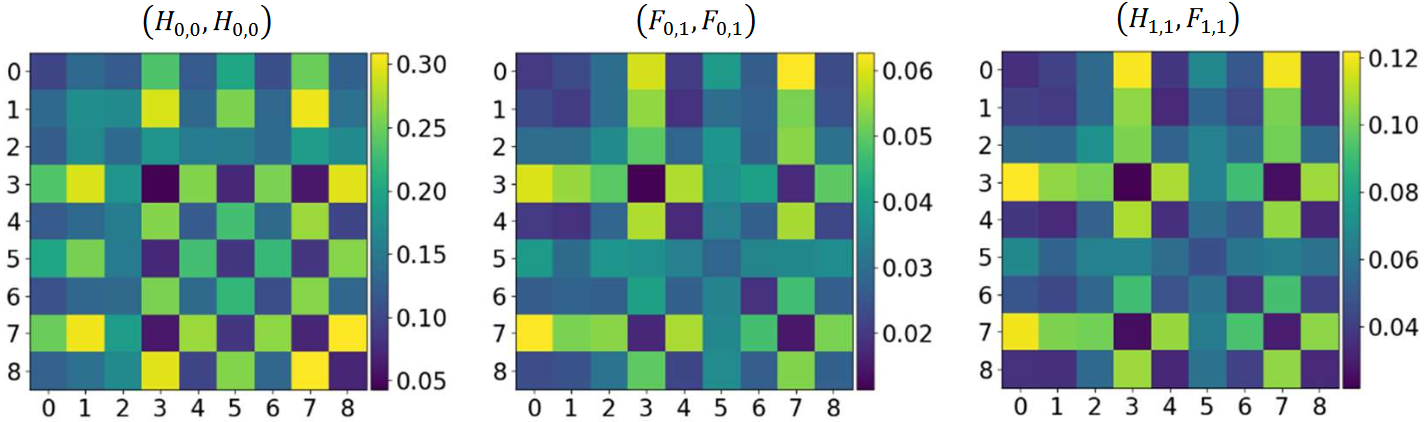}
		\caption{ Heat maps of the average $L^{1}$-distance matrices between the reference (rows) and validation (columns) CHD profiles of the nine authors for the pair of motifs $(H_{00}, F_{00})$ (left), $(H_{01},F_{01})$ (middle), and $(H_{11},F_{11})$ (right). 
		}
		\label{fig:WAN_dist_mx}
	\end{figure*}

	\begin{figure*}[h]
		\centering
		\refstepcounter{figure}
		\begin{tabular}{c }
			\begin{minipage}{0.8\linewidth}	
				\begin{subfigure}[b]{1\textwidth}            
					\includegraphics[width=\textwidth]{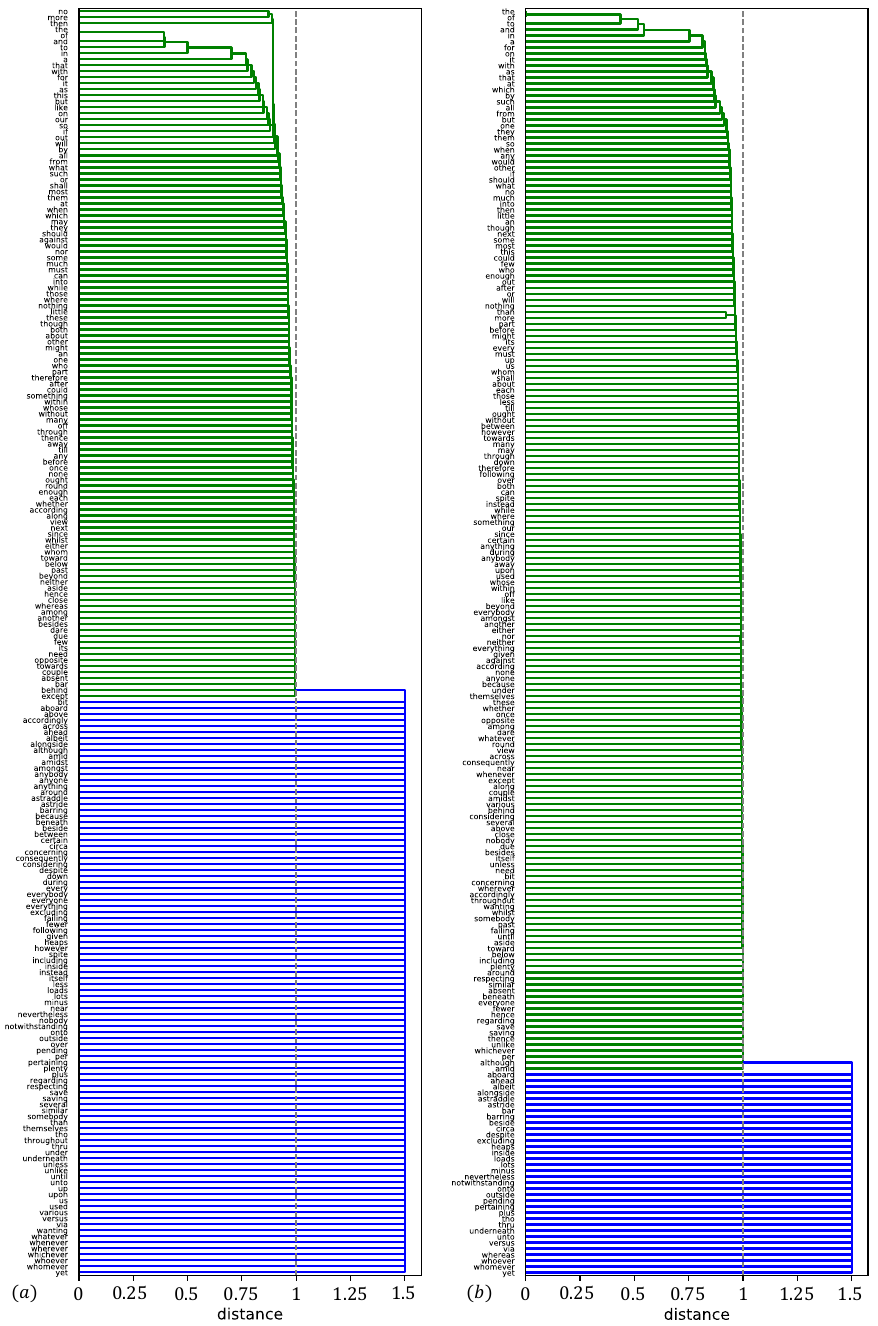}
				\end{subfigure}%
			\end{minipage}
			\rotatebox[origin=c]{90}{
				{\begin{minipage}[t]{.8\linewidth} 
						\caption{Single-linkage dendrogram of WANs corresponding to "Jane Austen - Pride and Prejudice" (a) and "Shakespeare  - Hamlet" (b).}
				\end{minipage}}
			}
		\end{tabular}
		\label{fig:austen_shakespeare_dendro}
	\end{figure*}

	\begin{figure*}[h]
		\centering
		\refstepcounter{figure}
		
		\begin{tabular}{c c}
			\begin{minipage}{0.76\linewidth}
				\includegraphics[width=1 \linewidth]{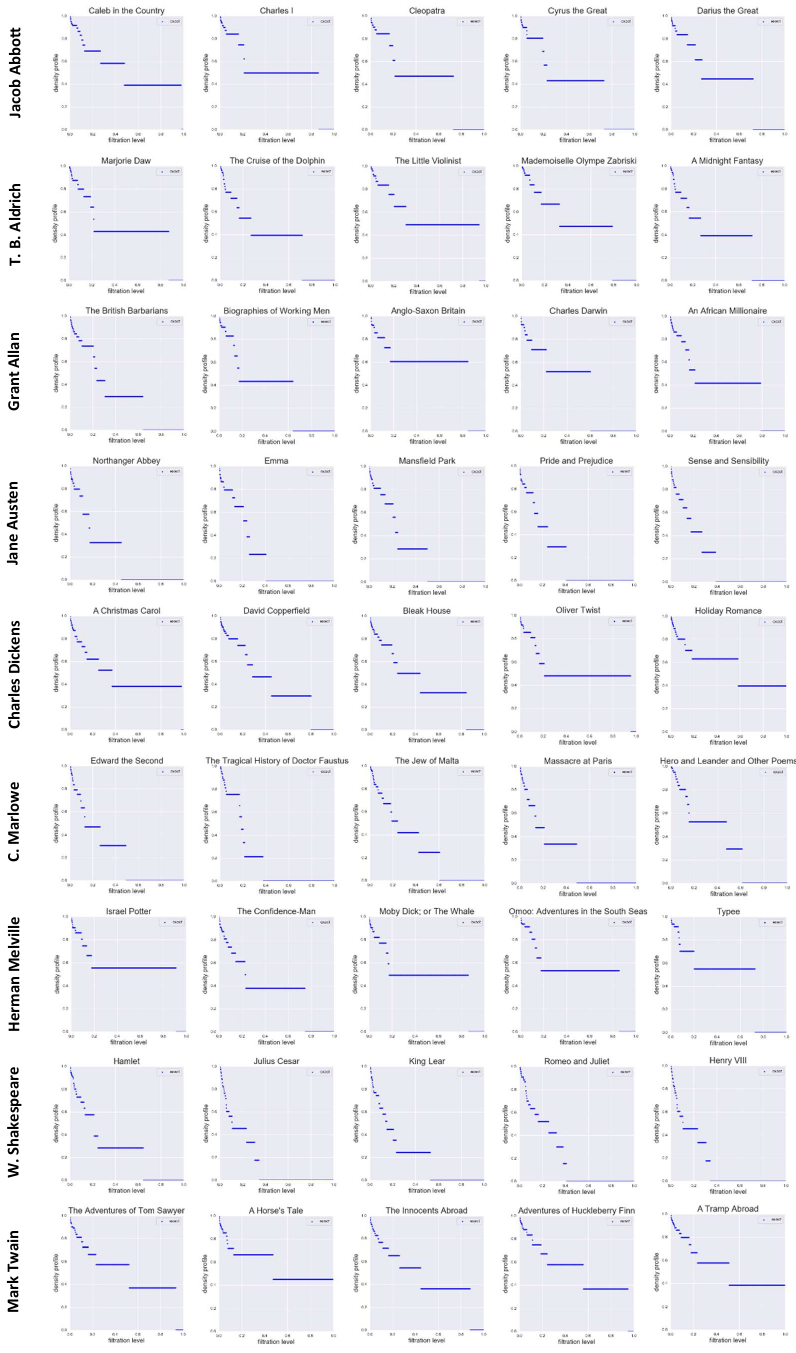}
			\end{minipage}
			\rotatebox[origin=c]{90}{
				{\begin{minipage}[t]{.8\linewidth} 
						\caption{Exact CHD profiles of the WAN dataset for the motif pair $H_{00}=F_{00}=([1],\mathbf{1}_{\{(1,1)\}})$.}
				\end{minipage}}
			}
		\end{tabular}

		\label{fig:WAN_profile_00}
	\end{figure*}

	\begin{figure*}
		\centering
		\refstepcounter{figure}
		
		\begin{tabular}{c c}
			\begin{minipage}{0.8\linewidth}
				\includegraphics[width=1 \linewidth]{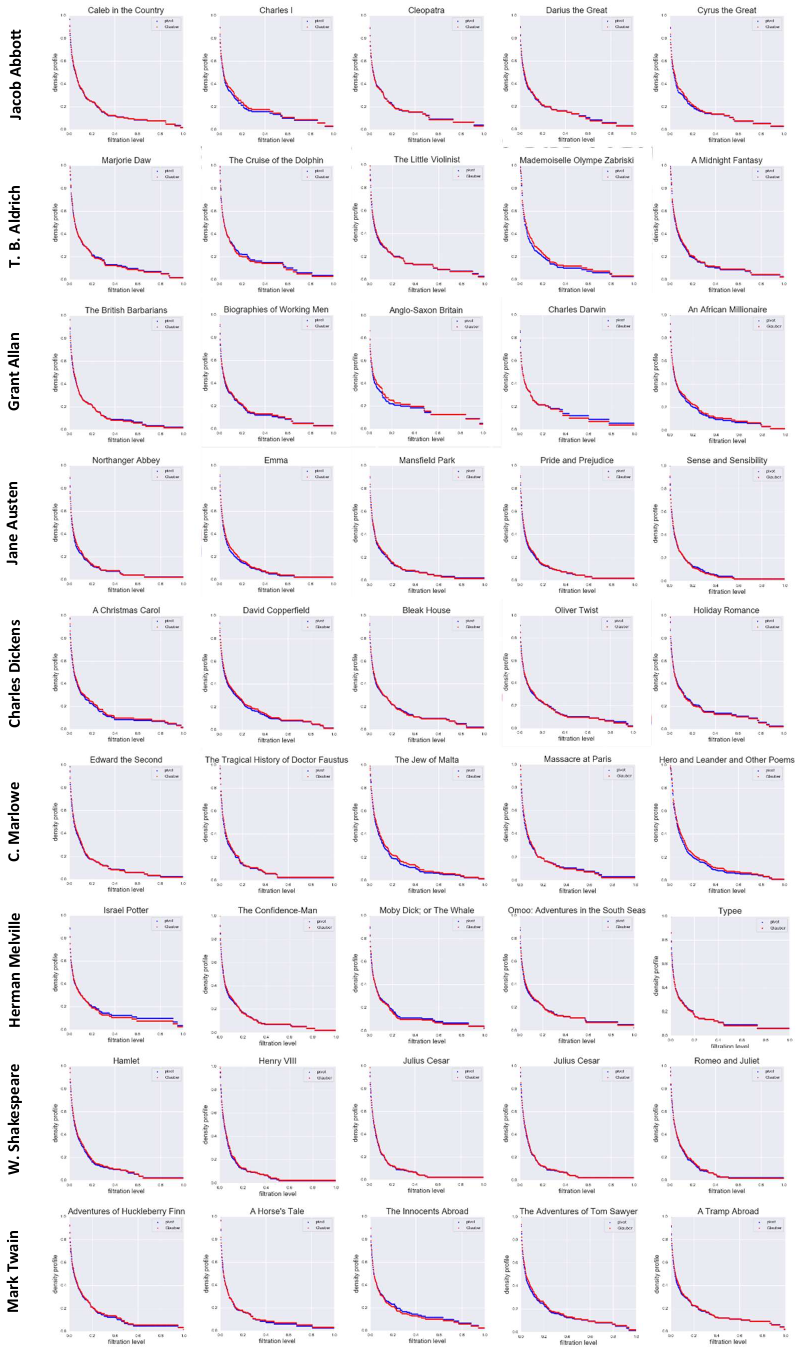}
			\end{minipage}
			\rotatebox[origin=c]{90}{
				{\begin{minipage}[t]{.8\linewidth} 
						\caption{Approximate CHD profiles of the WAN dataset for the motif pair $H_{01}=F_{01}=([1,2],\mathbf{1}_{\{(1,2)\}})$. Glauber chain (red) and Pivot chain (blue) for 5000 iterations.}
				\end{minipage}}
			}
		\end{tabular}

		\label{fig:WAN_profile_01}
	\end{figure*}

	\begin{figure*}
		\centering
		\refstepcounter{figure}
		\begin{tabular}{c c}
			\begin{minipage}{0.8\linewidth}
				\includegraphics[width=1 \linewidth]{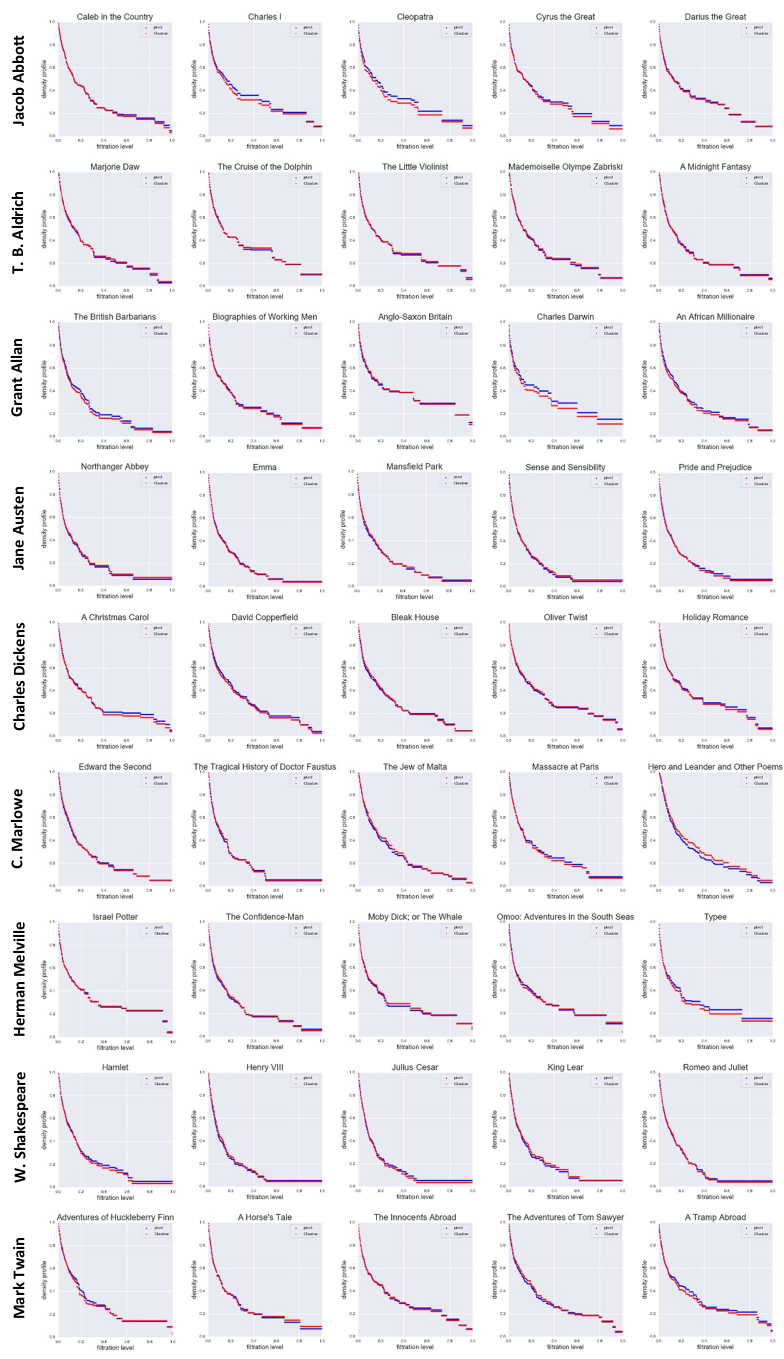}
			\end{minipage}
			\rotatebox[origin=c]{90}{
				{\begin{minipage}[t]{.8\linewidth} 
						\caption{Approximate CHD profiles of the WAN dataset for the motif pair $H_{11}=([1,2,3],\mathbf{1}_{\{2,3\}})$ and $F_{11}=([1,2,3],\mathbf{1}_{\{(1,2),(1,3)\}})$.    Glauber chain (red) and Pivot chain (blue) for 5000 iterations.}
				\end{minipage}}
			}
		\end{tabular}
		
		\label{fig:WAN_profile_11}
	\end{figure*}

\end{document}